\documentclass[11pt]{amsart}

\usepackage{geometry}
\usepackage[T1]{fontenc}
\usepackage{amsmath,amssymb}
\usepackage{tikz-cd}
\usepackage[protrusion=true,expansion=false]{microtype}
\usepackage[hidelinks]{hyperref}

\allowdisplaybreaks[2]
\newtheorem{theorem}{Theorem}[section]
\newtheorem{proposition}[theorem]{Proposition}
\newtheorem{lemma}[theorem]{Lemma}
\newtheorem{corollary}[theorem]{Corollary}
\theoremstyle{definition}
\newtheorem{definition}[theorem]{Definition}
\theoremstyle{remark}
\newtheorem{remark}[theorem]{Remark}
\newtheorem{question}[theorem]{Question}

\newtheorem*{fujitaconjecture}{Fujita's freeness conjecture}

\DeclareMathOperator{\lct}{lct}
\DeclareMathOperator{\mult}{mult}
\DeclareMathOperator{\edim}{edim}
\DeclareMathOperator{\ord}{ord}
\newcommand{\m}{\mathfrak m}
\newcommand{\ebarzero}{\overline{e}_0}
\newcommand{\ebarone}{\overline{e}_1}

\title{A linear bound for Fujita's freeness conjecture}
\author{Jingjun Han}
\address{Shanghai Center for Mathematical Sciences \& School of Mathematical Sciences,
Fudan University, Shanghai 200438, China}
\email{hanjingjun@fudan.edu.cn}
\date{}

\begin{document}

\begin{abstract}
Let $X$ be a smooth complex projective variety of dimension $n$, and let $L$ be an ample Cartier divisor. We prove that $K_X+mL$ is globally generated for every integer $m\geq\lceil C_0n\rceil$, where $C_0=1.77629\ldots$ is an explicit constant. In particular, $K_X+2nL$ is globally generated. We also prove that if $Z$ is a normal projective variety of dimension $n$ with $K_Z$ Cartier, nef and big, then $|mK_Z|$ defines a birational map for every integer $m\geq1+\lceil C_0n+3\sqrt{2\mathrm{e}n}\rceil$. Our main input is a new estimate for the multiplicity of a minimal log canonical center. If $(X,\Delta)$ is log canonical near a closed point $x$ but is not klt at $x$, and $W$ is the positive-dimensional minimal log canonical center through $x$, then $2\overline{e}_1(\mathfrak m_{W,x})\leq\bigl(\dim W-\operatorname{lct}_x((X,\Delta);\mathfrak m_x)\bigr)\operatorname{mult}_xW$, where $\overline{e}_1(\mathfrak m_{W,x})$ is the first normal Hilbert coefficient of the maximal ideal of $\mathcal O_{W,x}$. This implies $\operatorname{mult}_xW\leq \frac{(a+c)^{a+c}}{a^ac^c}$, where $a:=\frac{\dim W-\operatorname{lct}_x((X,\Delta);\mathfrak m_{x})}{2}$ and $c:=\operatorname{edim}\mathcal O_{W,x}-\dim W$.
\end{abstract}

\maketitle

\section{Introduction}

Throughout, we work over the field of complex numbers $\mathbb C$.


Globally generated line bundles give rise to morphisms to projective space, which are among the basic tools for studying projective varieties. Fujita's freeness conjecture predicts a sharp uniform bound for the global generation of adjoint line bundles and has therefore received considerable attention. 

\begin{fujitaconjecture}[{\cite{Fuj87}}]
Let $X$ be a smooth projective variety of dimension $n$, and let $L$ be an ample Cartier divisor. Then $K_X+mL$ is globally generated for every integer $m\geq n+1$.
\end{fujitaconjecture}


Fujita's conjecture is known in dimensions at most five \cite{Rei88,EL93,Kaw97,YZ17,YZ20}. Angehrn--Siu proved the global generation for $m\geq(n^2+n+2)/2$ \cite{AS95}. Another induction leading to a quadratic bound was developed by Helmke  \cite{Hel97}. Heier later combined the Angehrn--Siu and Helmke
approaches to obtain a bound of order $n^{4/3}$ \cite{Hei02}. Recently, Ghidelli and Lacini proved that global generation holds for every integer $m\geq n(\log\log n+2.34)$ when $n\geq2$ \cite[Theorem~1.1]{GL24}. Thus the previously known general bounds in arbitrary dimension were still superlinear.

We prove the following linear bound in arbitrary dimension. 

\begin{theorem}\label{thm:main}
Let $X$ be a smooth projective variety of dimension $n$ and let $L$ be an ample Cartier divisor. Then $K_X+mL$ is globally generated for every integer $m\geq\lceil C_0n\rceil$, where $\phi:=(1+\sqrt5)/2$, and
\[
 C_0:=\int_0^{\phi^{-2}}
 \frac{2\bigl(-z\log z-(1-z)\log(1-z)\bigr)}
 {z^{3/2}\log^2 z}\,dz
 =1.77629988\ldots .
\]
\end{theorem}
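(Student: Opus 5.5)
We follow the Angehrn--Siu/Helmke strategy. By Kawamata--Viehweg vanishing it suffices, for a fixed closed point $x\in X$, to produce an effective $\mathbb Q$-divisor $\Delta\sim_{\mathbb Q}\lambda L$ with $\lambda<m$ such that $(X,\Delta)$ is log canonical near $x$ and $\{x\}$ is an isolated log canonical center. Then the standard Nadel vanishing argument with the multiplier ideal of $\Delta$ (after a tie-breaking perturbation) shows that $H^0(K_X+mL)\to H^0(K_X+mL|_x)$ is surjective.

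We build $\Delta=\sum_i\Delta_i$ by cutting down the dimension of the minimal log canonical center $W_i$ through $x$. Start with $W_0=X$. At each step, Riemann--Roch gives sections of $kL|_{W_i}$ vanishing at $x$ to order roughly $k\,(L^{d}\cdot W_i)^{1/d}/(\mult_xW_i)^{1/d}\geq k/(\mult_xW_i)^{1/d}$, with $d=\dim W_i$. Lifting these to $X$ (Kawamata subadjunction/extension) yields a divisor that makes a smaller-dimensional center appear. The cost of this step is about $d\,(\mult_xW_i)^{1/d}$ times the remaining room, which is measured by $\lct_x$.

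The key new input is the multiplicity estimate stated in the abstract. Write $a=(\dim W-\lct_x((X,\Delta);\m_x))/2$ and $c=\edim\mathcal O_{W,x}-\dim W$. Then
\[
\mult_xW\leq\frac{(a+c)^{a+c}}{a^ac^c}.
\]
Since $c\leq n-d$, this bounds $\mult_xW$ in terms of how much log canonical threshold has already been spent. The multiplicity is therefore small when either the codimension is small or the remaining $\lct$ is large. We also control the increment of $\lct$ used at each stage. A new divisor of weight $t$ raises $\sum$ multiplicities at $x$, and this lowers $\lct_x(\,\cdot\,;\m_x)$ by about $t/(\mult_x W)^{1/d}$ per unit of $L$.

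Combining these gives a recursion for the total coefficient $\lambda$. Parametrize by $z=d/n$, roughly, or more precisely by the ratio $a/(a+c)$. Then $(\mult_xW)^{1/d}$ is bounded via the entropy $-z\log z-(1-z)\log(1-z)$ coming from $\log\bigl((a+c)^{a+c}/(a^ac^c)\bigr)$. Summing the costs over the dimension drops becomes, in the limit $n\to\infty$, a Riemann sum, and we expect it to converge to $C_0n$. The $z^{3/2}\log^2z$ denominator and the cutoff $\phi^{-2}$ should come from optimizing the choice of how far to cut at each step. We would first take geometric dimension drops and check that the optimal ratio is $\phi^{-2}$. Finally, we must verify the discrete bound $\lambda<\lceil C_0n\rceil$ for every $n$, not only asymptotically. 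For small $n$ this can be checked directly or absorbed into the known low-dimensional cases.

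The main obstacle is the optimization step. We need to set up the inductive inequality so that the multiplicity bound, which involves $\lct$ and hence the history of the construction, and the cost per step close up into an integral with exactly the constant $C_0$. We also have to make sure the error terms from rounding and from the $\epsilon$-perturbations in tie-breaking stay $o(n)$ and are in fact absorbed by the ceiling.
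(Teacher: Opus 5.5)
\paragraph{Verdict: gap in the numerical step.}
Your architecture is the paper's. You cut down the minimal lc center through $x$ with the Ghidelli--Lacini step estimate, feed in the multiplicity bound of Theorem~\ref{thm:multiplicity-bound}, then tie-break and apply Nadel vanishing. But the step that is specific to this theorem is left as an expectation: showing that the total coefficient is \emph{strictly} below $C_0n$ for \emph{every} $n$. The route you sketch for it would not work, for three reasons.

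First, the per-step accounting must be $t_{i+1}\le(b_i-b_{i+1}+\varepsilon_i)\bigl(\mult_xW_i/L^{d_i}\cdot W_i\bigr)^{1/d_i}$ with $b_i=b_x(X,\Delta_i)$, which is your second formulation. A cost of order $d(\mult_xW_i)^{1/d}$ per step is the Angehrn--Siu accounting and leads to quadratic bounds; the linear bound comes from the weights $b_i-b_{i+1}$ telescoping to $n$.

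Second, you do not choose how far to cut or how the dimension drops. The $b_i$, $d_i$ and $c_i=\edim\mathcal O_{W_i,x}-d_i$ are dictated by the geometry. So you need an estimate valid for every sequence $n=b_0\ge\cdots\ge b_k=0$ with $d_i$ strictly decreasing and $0\le c_i\le n-d_i$. ``Geometric dimension drops with optimal ratio $\phi^{-2}$'' has no place in the argument.

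Third, an asymptotic Riemann-sum argument gives at best $C_0n+o(n)$. An $o(n)$ error cannot be absorbed by a ceiling, whose slack is less than $1$. The known cases of Fujita's conjecture stop at $n=5$, so the intermediate dimensions would be unaccounted for.

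\paragraph{The missing idea.}
You need to decouple the multiplicity bound from the unknown $d_i$ and $c_i$.
\begin{itemize}
\item For every $0<u<1$ one has $\frac{(a+c)^{a+c}}{a^ac^c}\le u^{-a}(1-u)^{-c}$. Take $u=z_i$ depending only on $b_i$: define it by $(1-z_i)^{2n}=z_i^{b_i}$, i.e.\ $\psi(z_i)=b_i/n$ with $\psi(z)=2\log(1-z)/\log z$.
\item Since $c_i\le n-d_i$, one gets $a_i+\frac{b_ic_i}{2n}\le\frac{d_i}{2}\bigl(1-\frac{b_i}{n}\bigr)$. Hence $(\mult_xW_i)^{1/d_i}\le z_i^{-\frac12(1-b_i/n)}=\frac{1-z_i}{\sqrt{z_i}}$, a weight depending on $b_i$ alone. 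It is in fact the supremum over real $d$ of the $d$-th root of the bound with $c=n-d$.
\item This weight increases as $b$ decreases. So $\sum_i(b_i-b_{i+1})\frac{1-z_i}{\sqrt{z_i}}$ is a lower Riemann--Stieltjes sum for $n\int_0^{\phi^{-2}}\frac{1-z}{\sqrt{z}}\psi'(z)\,dz=C_0n$. It is therefore strictly smaller than $C_0n$, for every $n$ and every admissible sequence.
\item The cutoff $\phi^{-2}=\psi^{-1}(1)$ simply encodes $b_0=n$, since it solves $(1-z)^2=z$. The product $\frac{1-z}{\sqrt{z}}\psi'(z)$ is exactly the integrand defining $C_0$; that is where the entropy term and the $z^{3/2}\log^2z$ come from, not from an optimization over cut sizes.
\item A compactness argument on the simplex $\phi^{-2}=z_0\ge\cdots\ge z_n=0$ gives a uniform slack $\delta_n>0$. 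This slack absorbs the $\varepsilon_i$ and the tie-breaking perturbation, yielding $s<C_0n\le m$ with no asymptotics and no small-dimension check.
\end{itemize}
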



\begin{corollary}\label{cor:2n}
Let $X$ be a smooth projective variety of dimension $n$ and let $L$ be an ample Cartier divisor. Then $K_X+2nL$ is globally generated.
\end{corollary}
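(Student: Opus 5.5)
The plan is to derive Corollary~\ref{cor:2n} directly from Theorem~\ref{thm:main}. Since $C_0<2$, we should have $\lceil C_0 n\rceil\le 2n$ for every $n\geq1$, and then $m=2n$ lies in the range covered by Theorem~\ref{thm:main}. The only real content is therefore a numerical check on the constant $C_0$ together with the elementary integer inequality.

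The steps are as follows. First, record that $C_0=1.77629988\ldots<2$; this is the numerical value stated in Theorem~\ref{thm:main}. If one wants an independent check, the integrand is nonnegative and integrable on $(0,\phi^{-2}]$. Near $0$ it behaves like $2/(z^{1/2}|\log z|)$, so the integral converges, and a crude upper bound by splitting the interval would give a value below $2$. Second, for any integer $n\ge1$, the bound $C_0n<2n$ with $2n$ an integer gives $\lceil C_0n\rceil\le 2n$. Third, apply Theorem~\ref{thm:main} with $m=2n\ge\lceil C_0 n\rceil$ to conclude that $K_X+2nL$ is globally generated.

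The only point needing any care is the first step, certifying $C_0<2$ rigorously rather than quoting decimals. I would simply rely on the value computed in Theorem~\ref{thm:main}. The margin is large ($2-C_0\approx0.22$), so any reasonable numerical quadrature with error control confirms it. If needed, one could bound $-z\log z-(1-z)\log(1-z)\le \log 2$ and estimate the tail piece near $0$ separately, but I expect this to be unnecessary.
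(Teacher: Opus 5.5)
Your proposal is correct and matches the paper's proof, which likewise deduces the corollary from Theorem~\ref{thm:main} using $C_0<2$, so that $\lceil C_0n\rceil\leq 2n$. One caution on your optional side check: bounding the numerator by $\log 2$ alone gives a non-integrable majorant near $0$, so the separate treatment of the piece near $0$ that you mention would be genuinely required there.
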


To the best of our knowledge, Theorem~\ref{thm:main} gives the first
uniform bound for Fujita's freeness conjecture that is linear in the
dimension. Thus the remaining gap from the conjectural bound $n+1$
lies in the leading constant rather than in the order of growth. Corollary~\ref{cor:2n} gives the simpler bound $2n$.


\medskip

Quadratic birationality bounds for adjoint linear systems were
obtained in \cite{EKL95,DC14}. We also prove the following linear bound for pluricanonical systems when the canonical divisor is Cartier, nef and big. 

\begin{theorem}\label{thm:pluricanonical-birationality}
Let $Z$ be a normal projective variety of dimension $n$. Assume that $K_Z$ is Cartier, nef and big. Then $|mK_Z|$ defines a birational map for every integer $m\geq1+\lceil C_0n+3\sqrt{2\mathrm{e}n}\rceil$.
\end{theorem}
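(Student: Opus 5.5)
The plan is to run the standard Angehrn--Siu / Ein--Kollár--Lazarsfeld scheme for separating two general points, with $mK_Z$ in place of $K_X+mL$. The multiplicity estimate for minimal log canonical centers from the abstract is the engine. Write $m-1=k$, so that $mK_Z=K_Z+kK_Z$. By the birationality criterion for big divisors, it suffices to do the following for two very general points $x,y\in Z$ (in the smooth locus, outside the augmented base locus of $K_Z$). We want an effective $\mathbb Q$-divisor $\Delta\equiv\lambda K_Z$ with $\lambda<k$ such that $(Z,\Delta)$ is lc at $x$, $\{x\}$ is an isolated lc center, and $(Z,\Delta)$ is not klt at $y$. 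Nadel/Kawamata--Viehweg vanishing then gives surjectivity of $H^0(mK_Z)\to H^0(\mathcal O_{\{x\}})\oplus$ (something nonzero at $y$), which separates $x$ and $y$. Here $mK_Z-K_Z-\Delta\equiv(k-\lambda)K_Z$ is nef and big. Since $K_Z$ is only Cartier, nef and big, I will work on $Z$ itself and use that very general points are smooth. Multiplier ideal vanishing holds for nef and big differences.

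The construction proceeds by cutting down lc centers, as in Theorem~\ref{thm:main}. At a very general point $x$ of a subvariety $W$ of dimension $d$, the volume $(K_Z^d\cdot W)\ge1$ lets us produce divisors $D\equiv tK_Z$ with multiplicity about $d\,(\mathrm{mult}_xW)^{1/d}/t$ along $W$ at $x$. This requires a uniform bound on $\mathrm{mult}_xW$. Here I invoke the estimate $\mathrm{mult}_xW\le (a+c)^{a+c}/(a^ac^c)$, where $a=(\dim W-\lct_x)/2$. The cost of cutting $W$ to a smaller center then comes out as $d/(\mathrm{vol})^{1/d}$ weighted by the multiplicity bound. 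This is exactly the recursion whose summed cost gives the integral $C_0n$ in Theorem~\ref{thm:main}. I will reuse that accounting verbatim, with one difference: at a very general point the Seshadri-type constant of $K_Z$ is controlled only through the volume $K_Z^n\ge1$, and not through ampleness on every subvariety.

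The extra term $3\sqrt{2\mathrm{e}n}$ comes from two sources, which I will handle separately. The first is the need to create the initial non-klt singularity at both $x$ and $y$ simultaneously. The second is the need for the tie-breaking at $y$ to be uniform: we only need non-klt at $y$, not an isolated center. For points at very general position, the volume of $K_Z$ restricted to an lc center $W$ through $x$ is at least $1$, because $K_Z$ is Cartier and nef and big on $W$ for $W$ not in the augmented base locus. Still, I expect to lose the rounding and perturbation errors of the $\epsilon$-steps at each stage. Bounding these by Stirling-type estimates, with $\binom{n}{d}$ and $(d!)^{1/d}\approx d/\mathrm{e}$ appearing in the multiplicity of divisors with prescribed order, should give a total excess $\le 3\sqrt{2\mathrm{e}n}$.

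The main obstacle will be this error accounting. Unlike the ample case, we cannot make the cut-down step at a point of a center $W$ that is special. We must ensure that every center we meet passes through very general points and that $K_Z|_W$ remains big; otherwise the volume bound fails. I would first try the Takayama/Hacon--McKernan style family argument: take $x,y$ very general and let centers vary in families, so that minimal centers through very general points dominate $Z$, hence are not contained in $\mathbf B_+(K_Z)$. After that, I would bound the discrepancy between the Fujita recursion and the pluricanonical one step by step, with errors summing to $O(\sqrt n)$.
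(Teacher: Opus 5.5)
Your overall frame matches the paper: separate points of a dense open set using a divisor $\Delta\sim_{\mathbb Q}\lambda K_Z$ with $\lambda<m-1$, where $\{x\}$ is an isolated lc center and the other point is non-klt; build $\Delta$ by the cutting procedure of Theorem~\ref{thm:main} with Theorem~\ref{thm:multiplicity-bound}; and finish with Nadel vanishing. The gap is that you leave the origin of the term $3\sqrt{2\mathrm{e}n}$ as something that ``should give'' the bound, and the sources you assign to it are the wrong ones.

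\textbf{Where the extra term comes from.} The $\varepsilon$-losses in the cutting steps and the cost of tie-breaking can be made arbitrarily small (the paper absorbs them into $\delta_n$). So no Stirling-type bookkeeping of them produces an $O(\sqrt n)$ term. The initial step has $\mult_xX=1$ and costs only $n(2^{1/n}-1)\le1$ extra. The real cost is that, as long as both points remain on a common minimal lc center, every cutting step must be done at both points at once. The paper does this with the multi-point cutting lemma (Lemma~\ref{lem:one-step-cutting}, i.e.\ \cite[Proposition~3.2]{GL24} with a finite set $S$). There the volume $L^{d_i}\cdot W_i$ is shared among the $|S|\le2$ points. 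Hence the $i$-th step costs at most $(b_i-b_{i+1})\bigl(2\mult_xW_i/(L^{d_i}\cdot W_i)\bigr)^{1/d_i}$, and points dropped from $S$ are left in non-klt centers avoiding the remaining ones. The excess over the one-point sum is $\sum_i(b_i-b_{i+1})(2^{1/d_i}-1)M_i^{1/d_i}$, where $M_i$ is the bound of Theorem~\ref{thm:multiplicity-bound}. It is significant only for low-dimensional centers, and it is controlled by:
\begin{itemize}
\item $M_i^{1/d_i}\le\sqrt{2\mathrm{e}n/d_i}$, which follows from $a_i\le d_i/2$ and $c_i\le n-d_i$;
\item $2^{1/d_i}-1\le1/d_i$;
\item $\sum_i(b_i-b_{i+1})d_i^{-3/2}\le1+\int_1^n t^{-3/2}\,dt\le3$, using $b_i\le d_i$.
\end{itemize}
This is Proposition~\ref{prop:numerical-sum} with $r=2$. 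Without the $|S|$-factor in the cutting lemma and this estimate, your argument does not reach the stated bound.

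\textbf{The non-ample case.} The family argument you plan here is not needed, and it misses the actual difficulty. Take $x\notin\mathbf B_+(K_Z)$. Then every subvariety $W\ni x$ satisfies $K_Z^{\dim W}\cdot W\ge1$ automatically, because $K_Z|_W$ is nef, big and Cartier. Conversely, $x$ being very general in $Z$ does not make it a general point of $W$, which is exactly why the multiplicity bound is needed at all.

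What is actually needed is a way to apply the cutting lemma, which is stated for ample divisors. The paper passes to a resolution $\pi\colon X\to Z$ that is an isomorphism over $Z_{\mathrm{sm}}$. It writes $\pi^*K_Z\sim_{\mathbb Q}A+E$ with $A$ ample, $E\ge0$ and $\operatorname{Supp}E\cap S=\emptyset$. It runs the ample case for $\pi^*K_Z+tA$ with $t$ small, then adds back $s'tE$, which does not change $\Delta$ near $S$. Nadel vanishing is then applied on the smooth $X$, and sections descend via $H^0(X,K_X+\lceil\pi^*D\rceil)\hookrightarrow H^0(Z,K_Z+\lceil D\rceil)$.
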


In particular, Theorem~\ref{thm:pluricanonical-birationality}
applies to Gorenstein minimal models of general type.
For explicit pluricanonical birationality bounds in dimension
three, see \cite{Che01,CCZ07,Che18}. For related uniform birationality results without explicit bounds, see \cite{HM06,Tak06,Tsu07,Pac09,BZ16,Bir23}. 

We also prove bounds for pluricanonical and anti-pluricanonical
systems in terms of the Cartier index, and more generally for
polarized pairs, see Corollaries~\ref{cor:pluricanonical-systems}
and~\ref{cor:polarized-birationality}. For related results on
polarized pairs and Iitaka fibrations, see \cite{Pac09,BZ16,Bir23}. For point separation by adjoint linear systems and the corresponding
homeomorphism results, see Theorem~\ref{thm:point-separation}
and Corollary~\ref{cor:adjoint-morphisms}, respectively.





\medskip

We outline the proof of Theorem~\ref{thm:main}.
Fix a closed point $x\in X$.
It suffices to construct an effective $\mathbb Q$-divisor
$\Delta\sim_{\mathbb Q}sL$, with $s<C_0n$, such that
$(X,\Delta)$ is log canonical near $x$ and $\{x\}$ is
the unique lc center through $x$.
Nadel vanishing then produces a global section of $K_X+mL$
which does not vanish at $x$ for every integer
$m\geq\lceil C_0n\rceil$.
We construct such a divisor by successively cutting down
minimal lc centers through $x$ to $\{x\}$ and then
applying tie-breaking.

Suppose that $\Delta\sim_{\mathbb Q}sL$ is effective,
$(X,\Delta)$ is log canonical near $x$, and its minimal
lc center $W$ through $x$ has positive dimension.
Set $d:=\dim W$, and let
$b:=b_x(X,\Delta)=\lct_x((X,\Delta);\m_x)$ be the local
discrepancy.
The multiplicity $\mult_xW$ enters through the comparison
of asymptotic Riemann--Roch on $W$ with the Hilbert--Samuel
polynomial at $x$.
More precisely, if a positive rational number $q$ satisfies
$$
q^d(L^d\cdot W)>b^d\mult_xW,
$$
then there exist an effective $\mathbb Q$-divisor
$D\sim_{\mathbb Q}L$ and a rational number $0<t<q$ such
that, for $\Delta':=\Delta+tD$, the pair $(X,\Delta')$
is log canonical near $x$ and its minimal lc center $W'$
through $x$ satisfies $W'\subsetneq W$
(cf. \cite[Proposition~3.2]{GL24}).
Since $\Delta'\sim_{\mathbb Q}(s+t)L$, replacing $\Delta$
by $\Delta'$ decreases the dimension of the minimal lc
center through $x$ and increases $s$ by $t<q$.

Although $X$ is smooth, $W$ need not be smooth at $x$,
so $\mult_xW$ need not equal $1$.
In Koll\'ar's algebraic formulation of the proof of
Angehrn--Siu \cite{AS95,Kol97}, one first chooses an
effective $\mathbb Q$-divisor
$\Delta_y\sim_{\mathbb Q}qL|_W$ with $\ord_y\Delta_y>d$ at a
general smooth point $y\in W$.
Since $\mult_yW=1$, such a divisor exists whenever
$q^d(L^d\cdot W)>d^d$.
One then takes a limit of these divisors as $y$ approaches
$x$, lifts the limiting divisor to $X$, and perturbs
the boundary to obtain a pair whose minimal lc center
through $x$ is strictly contained in $W$, see
\cite[Theorem~6.8.1 and Corollary~7.8]{Kol97}.
This avoids the need to estimate $\mult_xW$ and provides a quadratic bound for Fujita's conjecture. Helmke instead works at the fixed point $x$ and proves
the multiplicity bound \cite[Theorem~4.3]{Hel97}
\begin{equation}\label{eq:previous-multiplicity}
\mult_xW\leq
\binom{\edim\mathcal O_{W,x}-\lceil b\rceil}
{\edim\mathcal O_{W,x}-d}.
\end{equation}
The main new ingredient in the proof of
Theorem~\ref{thm:main} is the following multiplicity estimate.

\begin{theorem}\label{thm:multiplicity-bound} Let $(X,\Delta)$ be a pair that is log canonical near
a closed point $x$ and is not klt at $x$. Let $W$ be the positive-dimensional minimal lc center
of $(X,\Delta)$ through $x$.
Then $0<b\leq d$, and
$$
\mult_xW\leq\frac{(a+c)^{a+c}}{a^ac^c},
$$
where $d:=\dim W$, $b:=b_x(X,\Delta)$, $a:=(d-b)/2$,
and $c:=\edim\mathcal O_{W,x}-d$.
\end{theorem}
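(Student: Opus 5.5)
The plan has two steps. First prove the Hilbert-coefficient inequality announced in the abstract,
\[
2\ebarone(\m_{W,x})\leq (d-b)\,\mult_xW .
\]
Then deduce the stated bound on $\mult_xW$ from it by a purely commutative-algebra estimate. We also need $0<b\leq d$. Here $b>0$ holds because $(X,\Delta)$ is log canonical near $x$, so adding a small multiple of $\m_x$ keeps it lc. For $b\leq d$, apply adjunction (Kawamata subadjunction, or Koll\'ar's adjunction for minimal lc centers): $W$ is normal at $x$ and carries a klt-type structure $(W,\Delta_W)$. Then $b_x(X,\Delta)$ is bounded by the corresponding log canonical threshold of $\m_{W,x}$ on $W$. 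That threshold is at most $\dim W=d$, as one sees by blowing up $x$ on a resolution, or from the inequality $\lct(\m)\le d$ for any $d$-dimensional klt germ.

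For the Hilbert-coefficient inequality, let $R=\mathcal O_{W,x}$, $\m=\m_{W,x}$, and consider the normalized blowup $\pi\colon Y\to W$ of $\m$, with exceptional divisor $E=\sum_i r_iE_i$, where $\m\mathcal O_Y=\mathcal O_Y(-E)$. Then
\[
\ebarzero=(-1)^{d-1}E^d, \qquad 2\ebarone=(-1)^{d-1}\bigl(K_{Y/W}\cdot E^{d-1}\bigr)+(d-1)\ebarzero
\]
up to the standard Riemann--Roch correction. I would derive this by computing $\ell(R/\overline{\m^n})=\chi(\mathcal O_Y/\mathcal O_Y(-nE))$ via Riemann--Roch on $E$, which gives $\ebarone$ in terms of $K_Y\cdot E^{d-1}$.

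The condition defining $b$ says $a(E_i;W,\Delta_W)\geq b\,r_i$ for each $i$, where the discrepancy is computed on $W$ via adjunction. So $K_Y+\pi^{-1}_*\Delta_W\geq \pi^*(K_W+\Delta_W)+(b-1)E$. Intersecting with $(-E)^{d-1}$, which is nef on the exceptional locus, the strict transform of $\Delta_W$ contributes with a favorable sign. This yields $(-1)^{d-1}K_{Y/W}E^{d-1}\ge (b-1)\ebarzero$, hence $2\ebarone\le (d-b)\ebarzero$. The direction must be checked carefully: the inequality for $2\ebarone$ should follow from an upper bound on the discrepancies. That upper bound is exactly where minimality of $W$ enters, because the pair $(X,\Delta+b\,\m_x)$ has an lc center at $x$ and pulls back to $W$ with log discrepancy $0$ along some $E_i$. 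This is the main obstacle: passing the local-discrepancy information from $X$ to $W$ requires a precise adjunction statement, and not merely subadjunction up to an unknown moduli part. I would try Kawamata's subadjunction with the moduli part chosen general, so that it is effective and does not affect the local computation at $x$.

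For the second step, let $e=\edim R=d+c$ and embed $W\subset \mathbb A^{e}$ locally. Then $\mathrm{gr}_\m R$ is a quotient of a polynomial ring in $e$ variables with Hilbert function $H(n)\le\binom{n+e-1}{e-1}$. Write $\ebarzero=\mult_xW=:\mu$. I would use the known inequality for the associated graded ring (Northcott/Elias-type bounds, refined for the normal filtration):
\[
\ebarone\ \ge\ \text{a quantity forcing }\ \mu\le \binom{a+c}{c}\text{-type bounds}.
\]
More concretely, I would compare $\ell(R/\overline{\m^{n+1}})\le\ell(R/\m^{n+1})\le\binom{n+e}{e}$ for all $n$ with $\mu\binom{n+d}{d}-\ebarone\binom{n+d-1}{d-1}+\cdots$, substitute $\ebarone\le a\mu$, and optimize over $n$ (taking $n\to\infty$ along a ratio $n\sim tc$). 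A cleaner route is to apply the Hilbert function bound to a general $d$-dimensional Artinian reduction, which reduces to $c$ variables. There one gets $\mu\le\binom{a+c}{c}$ in an asymptotic, Stirling form, and the convexity bound $\binom{a+c}{c}\le (a+c)^{a+c}/(a^ac^c)$ finishes the argument, with the real-valued $a$ handled by the entropy form directly.
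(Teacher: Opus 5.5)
Your two-step architecture (normalized blow-up plus adjunction, then an Artinian reduction) is the paper's. However, the algebraic step fails as you describe it.

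\textbf{The algebraic step.} The intermediate claim $\mult_xW\le\binom{a+c}{c}$ is false. Take the cone over a smooth complete intersection of two quadrics in $\mathbb P^4$ (the example in the paper's appendix). There $a=1$, $c=2$, and $\mult_xW=4>3$; the Artinian reduction has Hilbert function $(1,2,1)$, whose mean degree is exactly $1$. Comparing $\ell(R/\m^{n+1})$ with $\binom{n+e}{e}$ gives nothing either: the right side grows like $n^{d+c}$, while the Hilbert polynomial is only valid for $n\gg0$ and has uncontrolled lower coefficients. Three ingredients are missing.
(i) $\mathcal O_{W,x}$ is Cohen--Macaulay, because a minimal lc center has rational singularities (Fujino--Gongyo).
(ii) For a Cohen--Macaulay ring and a minimal reduction $J$ of $\m$, let $h_k=\ell(\mathfrak n^k/\mathfrak n^{k+1})$ be the Hilbert function of $A=R/J$. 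Then $e_0(\m)=\sum_k h_k$, $h_k\le\binom{c+k-1}{k}$, and $e_1(\m)\ge\sum_{j\ge1}\ell(\m^j/(J\cap\m^j))=\sum_k kh_k$ (Huckaba--Marley).
(iii) $e_1\le\ebarone\le ae_0$, so $\sum_k kh_k\le a\sum_k h_k$.
The bound then follows from weighted AM--GM, not Stirling: $(\sum_k h_k)z^a\le\sum_k h_kz^k\le(1-z)^{-c}$ for $0<z<1$, and one takes $z=a/(a+c)$. Without (i) and (ii), the hypothesis on $e_1$ has no grip on the Artinian reduction.

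\textbf{The geometric step.} Your Riemann--Roch formula has the wrong sign. With $H=-E$ the correct identity is $K_Y\cdot H^{d-1}=(d-1)\ebarzero-2\ebarone$. On the blow-up of a smooth surface point, $K_Y\cdot H=1$ and $\ebarone=0$, whereas your formula gives $\ebarone=1$. With the correct sign, $2\ebarone\le(d-b)\ebarzero$ is equivalent to $K_Y\cdot H^{d-1}\ge(b-1)\ebarzero$, which is exactly what your divisor inequality produces. That uses only the \emph{lower} bounds $1-\mult_F\mathbf B\ge b\,\ord_F(\m)$ and $\ord_F(\m)\ge1$. So your remark that minimality must supply an upper bound on discrepancies is misdirected.

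The two points you leave open are the real content.
\begin{enumerate}
\item The inequality $1-\mult_F\mathbf B\ge b\,\ord_F(\m_{W,x})$ is not the definition of $b$. The paper proves it as follows. First, $(\widetilde X,\Delta_{\widetilde X}+bD_x)$ is sub-lc over a neighbourhood of $x$. One then restricts to every lc place $T$ over $W$, where $D_x|_T\ge\ord_F(\m)f_T^*F$. Finally one uses the description of $\mathbf B$ by lc thresholds on the $T$'s.
\item There is no known way to replace the moduli part by an effective divisor avoiding the fibre over $x$, since $\mathbf M$ is only known to be b-nef. The paper instead uses nefness on a high model $V$ to get $M_V\cdot H_V^{d-1}\le0$, hence $(K_Y+B_Y)\cdot H^{d-1}\ge0$. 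A Kawamata-type perturbation could also work: add $\varepsilon$ times an ample divisor, note that its exceptional part changes the finitely many relevant log discrepancies by $O(\varepsilon)$, and let $\varepsilon\to0$. But you would have to carry this out.
\end{enumerate}

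\textbf{The bounds on $b$.} $b>0$ does not follow from log canonicity: consider $(\mathbb A^2,\{uv=0\})$ at the origin, where $b=0$. It holds because $x$ is not an lc center. As for $b\le d$, it is best obtained from the Hilbert inequality itself, via $0\le e_1\le\ebarone\le ae_0$. You should not rely on an unproved bound $\lct(\m)\le d$ in this generalized-pair setting.
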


Here and below, the quotient
$\frac{(a+c)^{a+c}}{a^ac^c}$ is defined to be $1$
if $a=0$ or $c=0$.


We also obtain the following bound for the multiplicity
of a klt singularity.

\begin{corollary}\label{cor:klt-multiplicity}
Let $(X,\Delta)$ be a klt pair near a closed
point $x$. Set $d:=\dim X$, $b:=b_x(X,\Delta)$, $a:=(d-b)/2$, and
$c:=\edim\mathcal O_{X,x}-d$.
Then $0<b\leq d$. If $c=0$, then $\mult_xX=1$. If $c>0$, then
$$\mult_xX\leq \left\lfloor \frac{1}{s+1-a}\binom{c+s+1}{c+1}
\right\rfloor,$$
where $s:=\left\lfloor\frac{(c+1)a}{c}\right\rfloor$.
\end{corollary}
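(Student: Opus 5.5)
The plan is to deduce the corollary from the normal Hilbert coefficient inequality stated in the abstract, applied with $W=X$, and then to run a purely commutative-algebra estimate on the normal Hilbert function of $\m:=\m_x\subset R:=\mathcal O_{X,x}$.

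\textbf{Step 1: the key inequality.} First, $0<b\le d$: here $b>0$ because $(X,\Delta)$ is klt at $x$, and $b\le d$ because $\ord_x\m_x=1$ and $\mathrm{a}(E_x,X,\Delta)\le d$ for the blow-up divisor of $x$ after passing to a resolution. The same argument that gives $2\ebarone(\m_{W,x})\le (d-b)\mult_xW$ for a minimal lc center should apply verbatim in the klt case with $X$ playing the role of $W$; there $X$ is the "minimal center" and no adjunction is needed. This yields
\[
\ebarone(\m)\le a\,\ebarzero(\m),\qquad \ebarzero(\m)=\mult_xX=:e .
\]
If the proof of that inequality is only stated for lc centers, I would instead reduce to it by taking $(X\times\mathbb A^1,\ \Delta\times\mathbb A^1+X\times\{0\})$. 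Its minimal lc center through $(x,0)$ is $X\times\{0\}\cong X$, and the local discrepancy is unchanged. Checking this formally, or rereading the proof for the klt case, is the step I expect to require the most care.

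If $c=0$, then $R$ is regular and $\mult_xX=1$. Assume now $c>0$.

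\textbf{Step 2: reduction to dimension one.} A klt singularity is normal and Cohen--Macaulay. Cut by $d-1$ general elements of $\m$. For $d\ge2$, the normal Hilbert coefficients $\ebarzero,\ebarone$ are preserved under a general hyperplane section; this is Itoh/Huneke, using normality. The embedding codimension $c$ is also preserved, since both $\dim$ and $\edim$ drop by one. We obtain a one-dimensional Cohen--Macaulay local ring $R'$ with $\edim R'=c+1$, the same $e$, and the same $\ebarone\le ae$. If preservation of $\ebarone$ only gives an inequality in the wrong direction, I would instead work directly with the sum formula below in dimension $d$ using a general minimal reduction. I expect this is routine.

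\textbf{Step 3: counting.} In dimension one with the normal filtration $\overline{\m^n}$, set $\bar h(n)=\ell(\overline{\m^n}/\overline{\m^{n+1}})$. A minimal reduction shows $\bar h(n)\le e$ for all $n$, with equality for $n\gg0$, and
\[
\ebarone=\sum_{n\ge0}\bigl(e-\bar h(n)\bigr),
\]
a sum of nonnegative terms. Truncating at any integer $s\ge0$ and using $\overline{\m^{s+1}}\supseteq\m^{s+1}$ gives
\[
ae\ \ge\ \ebarone\ \ge\ (s+1)e-\ell\bigl(R'/\overline{\m^{s+1}}\bigr)\ \ge\ (s+1)e-\ell\bigl(R'/\m^{s+1}\bigr)\ \ge\ (s+1)e-\binom{c+1+s}{c+1}.
\]
The last step holds because $R'$ is a quotient of a regular local ring of dimension $c+1$.

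\textbf{Step 4: choosing $s$.} For $s+1>a$ this gives
\[
e\le \frac{1}{s+1-a}\binom{c+s+1}{c+1},
\]
and taking integer parts is allowed since $e\in\mathbb Z$. Take $s=\lfloor (c+1)a/c\rfloor$. Then $s+1>(c+1)a/c\ge a$, so the denominator is positive. This choice is essentially the minimizer of the right-hand side, since the ratio of consecutive values crosses $1$ at $s\approx(c+1)a/c$. Only positivity is needed for the stated bound.
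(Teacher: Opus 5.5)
\textbf{A gap: the proof of $b\le d$.} Your Step~1 argument for $b\le d$ does not work as written. On a singular $X$ no divisor over $x$ is known in advance to satisfy $A_{X,\Delta}\le d\,\ord(\m_x)$; producing one is exactly the claim $b\le d$. Take a resolution $\mu\colon X'\to X$ and let $G$ be the exceptional divisor of the blow-up of a point $x'\in\mu^{-1}(x)$. Then $A_{X,\Delta}(G)=d-\mult_{x'}\Delta_{X'}$. Since $\Delta_{X'}$ has negative coefficients along exceptional divisors of log discrepancy $>1$, this can exceed $d\,\ord_G(\m_x)$.

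For example, on the cone $xy=zw$ in $\mathbb A^4$, blowing up a point of the exceptional quadric $E_1$ (with $A(E_1)=2$ and $\ord_{E_1}\m=1$) gives $A(G)=4>3=d\,\ord_G(\m)$. This is not cosmetic, because Step~4 needs $a\ge0$ to have $s\ge0$ and $s+1>a$.

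The repair is already inside your argument, and it is how the paper proceeds. Since $\ebarone(\m)\ge0$ (e.g.\ via $\ebarone\ge e_1\ge0$, or your dimension-one sum of nonnegative terms), the inequality $0\le2\ebarone(\m)\le(d-b)\mult_xX$ forces $b\le d$. In the paper this appears as $b\le b_y(X\times\mathbb A^1,p^*\Delta+H)\le d$, with the second inequality coming from Theorem~\ref{thm:multiplicity-bound}.

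\textbf{Comparison with the paper's route.} Once this is fixed, your proof is correct. It differs from the paper mainly on the algebraic side.

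\emph{Step 1.} Your fallback in Step~1, the product with $\mathbb A^1$, is exactly the paper's reduction. There only $b_y\ge b$ is needed, which is immediate from $\m_x\mathcal O_Y\subseteq\m_y$. Equality does hold, via weighted blow-ups of $E_j\times\{0\}$, but it is not needed. Your primary route, rerunning the proof of Theorem~\ref{thm:local} with $W=X$, also works and is more elementary. Put $K_Y+B_Y=f^*(K_X+\Delta)$, so there is no moduli part. Then $(K_Y+B_Y)\cdot H^{d-1}=0$ by the projection formula, $1-\mult_FB_Y=A_{X,\Delta}(F)\ge b\ord_F(\m)$ by the definition of $b$, and Lemma~\ref{lem:boundary-sign} handles the term $f^{-1}_*\Delta$. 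No subadjunction is required.

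\emph{The algebraic step.} The paper replaces $\ebarone$ by $e_1\le\ebarone$ and passes to an Artinian reduction $R/J$ of embedding dimension $c$. It then uses the Huckaba--Marley bound $e_1\ge\sum_k kh_k$ together with $h_k\le\binom{c+k-1}{k}$. You instead cut down to dimension one and use $\ebarone=\sum_n(e-\bar h(n))$ with $\ell(R'/\m^{s+1})\le\binom{c+s+1}{c+1}$. Both routes give $(s+1-a)e\le\binom{c+s+1}{c+1}$.

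\emph{Step 2.} Your Step~2 claim is true for normal $R$. Sections of dimension $\ge2$ stay normal by Bertini, and Serre vanishing on the normalized blow-up gives $\overline{\m^n}R'=\overline{\m^nR'}$ for $n\gg0$. You can avoid it entirely, though. Ordinary $e_1$ is preserved modulo a superficial regular sequence in a Cohen--Macaulay ring. In dimension one, $e_1=\sum_n(e-h(n))$ with $h(n)=\ell(\m^n/\m^{n+1})\le e$. So your count runs verbatim starting from $e_1\le\ebarone\le ae$.
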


For a klt germ $x\in X$, an estimate of the same form as \eqref{eq:previous-multiplicity} holds with $W$ replaced
by $X$, $d=\dim X$, and $b=b_x(X,0)$
(cf. \cite[Propositions~2.6 and~3.2]{TW04}, \cite[Proposition~3.9]{Shi17}).
Corollary~\ref{cor:klt-multiplicity} refines this estimate. For minimal lc centers in smooth varieties, Corollary~\ref{cor:discrete-lc-center} refines both
\eqref{eq:previous-multiplicity} and the bound in Theorem~\ref{thm:multiplicity-bound}. The proofs and the comparison of the bounds are given
in Appendix~\ref{sec:discrete-refinements}.



\medskip

We now return to the proof of Theorem~\ref{thm:main}
and apply Theorem~\ref{thm:multiplicity-bound} to bound
the sum of the coefficients added in the construction
of $\Delta$. Set $\Delta_0:=0$, $W_0:=X$, and $b_0=d_0:=n$.
For $i>0$, let $W_i$ be the minimal lc center of
$(X,\Delta_i)$ through $x$, and set
$d_i:=\dim W_i$ and $b_i:=b_x(X,\Delta_i)$.
As long as $d_i>0$, for every $\varepsilon_i>0$ we may
choose an effective $\mathbb Q$-divisor
$D_{i+1}\sim_{\mathbb Q}L$ such that, setting
$t_{i+1}:=\lct_x((X,\Delta_i);D_{i+1})$ and
$\Delta_{i+1}:=\Delta_i+t_{i+1}D_{i+1}$,
the pair $(X,\Delta_{i+1})$ is log canonical near $x$,
one has $W_{i+1}\subsetneq W_i$, and
\begin{equation}\label{eqn: upperbound ti}
t_{i+1}\leq
(b_i-b_{i+1}+\varepsilon_i)
\left(
\frac{\mult_xW_i}{L^{d_i}\cdot W_i}
\right)^{1/d_i}
\end{equation}
(cf. \cite[Proposition~3.2 and Remark~3.3]{GL24}).
Since the dimensions decrease strictly, after at most
$n$ steps we have $W_k=\{x\}$.
Moreover, $n=b_0\geq b_1\geq\cdots\geq b_k=0$.

For $0\leq i<k$, set $a_i:=(d_i-b_i)/2$ and
$c_i:=\edim\mathcal O_{W_i,x}-d_i$.
Then $0\leq c_i\leq n-d_i$, and $a_0=c_0=0$.
Applying Theorem~\ref{thm:multiplicity-bound}, we obtain
$$
\sum_{i=0}^{k-1}t_{i+1}
\leq
\sum_{i=0}^{k-1}(b_i-b_{i+1}+\varepsilon_i)
\left(
\frac{(a_i+c_i)^{a_i+c_i}}
{a_i^{a_i}c_i^{c_i}}
\right)^{1/d_i}
<C_0n.
$$
The last inequality follows from
Proposition~\ref{prop:numerical-sum} for sufficiently small
$\varepsilon_i>0$.
For comparison, using Helmke's multiplicity bound instead
gives $\sum_{i=0}^{k-1}t_{i+1}<n(\log\log n+2.34)$ \cite[Theorem~4.1\textup{(1)}]{GL24}.

Since $D_{i+1}\sim_{\mathbb Q}L$, we have
$\Delta_k\sim_{\mathbb Q}
\bigl(\sum_{i=0}^{k-1}t_{i+1}\bigr)L$.
By tie-breaking (cf. \cite[Lemma~2.8]{GL24}), we may perturb
$\Delta_k$ to obtain the required effective
$\mathbb Q$-divisor $\Delta\sim_{\mathbb Q}sL$
with $s<C_0n$.
This proves Theorem~\ref{thm:main}.

\medskip

We next explain the main ingredients in the proof of
Theorem~\ref{thm:multiplicity-bound}. For simplicity, we may assume that $X$ is projective, $W$ is normal and $d:=\dim W\geq2$. Since $W$ is an lc center of the log canonical pair $(X,\Delta)$, it is natural to apply the (higher-codimensional) subadjunction formula from birational geometry:
\begin{equation*}
(K_X+\Delta)|_W\sim_{\mathbb Q}K_W+B_W+M_W,
\end{equation*}
where $B_W$ and $M_W$ are the traces of the discriminant and moduli $b$-divisors $\mathbf B$ and $\mathbf M$, respectively. The coefficients of $\mathbf B$ are defined by log canonical thresholds, while there is a suitable birational model $Z'\to W$ such that $M_{Z'}$ is nef.

Let $f\colon Y\to W$ be the normalized blow-up of $\m_{W,x}$, write $\m_{W,x}\mathcal O_Y=\mathcal O_Y(-E)$ and $H:=-E$, and let $B_Y$ be the trace of $\mathbf B$ on $Y$. Proposition~\ref{prop:threshold} compares the local discrepancy with the coefficients of $\mathbf B$ and shows that
\begin{equation*}
 1-\mult_FB_Y
 \geq
 b_x(X,\Delta)\ord_F(\m_{W,x})
\end{equation*}
for every prime component $F$ of $E$. The nefness of $M_{Z'}$ yields $(K_Y+B_Y)\cdot H^{d-1}\geq0$. On the other hand, the comparison of Riemann--Roch on $Y$ with the normal Hilbert polynomial of $\m_{W,x}$ in Proposition~\ref{prop:projective-rr} yields
\begin{equation*}
 K_Y\cdot H^{d-1}
 =(d-1)\mult_xW-2\ebarone(\m_{W,x}).
\end{equation*}
Combining these estimates, we obtain the following inequality relating the first normal Hilbert coefficient to
the local discrepancy $b_x(X,\Delta)$.

\begin{theorem}\label{thm:local}
Let $(X,\Delta)$ be a pair that is log canonical near
a closed point $x$ and is not klt at $x$.
Let $W$ be the positive-dimensional minimal lc center
of $(X,\Delta)$ through $x$. Then 
$$
2\ebarone(\m_{W,x})\leq\bigl(\dim W-b_x(X,\Delta)\bigr)\mult_xW,
$$
where $\ebarone(\m_{W,x})$ is the first normal Hilbert coefficient of the maximal ideal of $\mathcal O_{W,x}$. 
\end{theorem}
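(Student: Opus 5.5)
The plan follows the outline sketched just before the statement.

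\emph{Reductions.} The statement is local at $x$ and depends only on $\mathcal O_{W,x}$, its normalization, and the germ of $(X,\Delta)$. We first compactify $X$ and shrink so that $W$ is projective. Next we replace $W$ by its normalization: minimal lc centers are normal (Kawamata), and the normal Hilbert coefficients see only the integral closure of powers of $\m_{W,x}$. If $d=1$, then $W$ is a normal curve, hence smooth at $x$. In that case $\mult_xW=1$ and $\ebarone=0$, and $b\le d$ (proved separately, e.g.\ via $b\le$ the discrepancy along the exceptional divisor of the blow-up of $x$ restricted to $W$) gives the claim. So we assume $d\ge2$.

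\emph{Subadjunction.} By the subadjunction formula for the minimal lc center we have
\[
(K_X+\Delta)|_W\sim_{\mathbb Q}K_W+B_W+M_W,
\]
with discriminant b-divisor $\mathbf B$ and moduli b-divisor $\mathbf M$. The b-divisor $\mathbf M$ is b-nef: on some model $Z'\to W$, $M_{Z'}$ is nef and $\mathbf M$ descends. Let $f\colon Y\to W$ be the normalized blow-up of $\m_{W,x}$, with $\m_{W,x}\mathcal O_Y=\mathcal O_Y(-E)$ and $H=-E$, which is $f$-ample. Since $H$ is supported over $x$ and $K_X+\Delta$ is pulled back from $W$, we get $f^*(K_W+B_W+M_W)\cdot H^{d-1}=0$. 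Hence
\[
(K_Y+B_Y)\cdot H^{d-1}=-M_Y\cdot H^{d-1},
\]
where $K_Y+B_Y+M_Y=f^*(\cdot)$. Passing to a common resolution $Y'$ over $Y$ and $Z'$, and using that $M_{Y'}$ is nef and $M_Y=$ pushforward, the negativity lemma should give $-M_Y\cdot H^{d-1}\ge 0$. This needs care: one writes $M_{Y'}=g^*M_Y-G$ with $G\ge0$ exceptional over $Y$ and intersects with $g^*H^{d-1}$. So we obtain $(K_Y+B_Y)\cdot H^{d-1}\ge0$; I would double-check the sign convention with $H=-E$.

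\emph{Combining.} Write $E=\sum e_F F$ with $e_F=\ord_F(\m_{W,x})$. Then $B_Y\cdot H^{d-1}$ only sees the components $F$ of $E$, since other components meet $H^{d-1}$ trivially. Each $F$ satisfies $F\cdot H^{d-1}=-F\cdot(-E)\cdot H^{d-2}\cdot(\ldots)$, and in the convention $(-E)^{d-1}\cdot F\le0$ with $\sum e_F(-F\cdot H^{d-1})=\mult_xW$. By Proposition~\ref{prop:threshold}, $\mult_FB_Y\le1-b\,e_F$, so
\[
B_Y\cdot H^{d-1}\le\sum_F(1-b e_F)\,F\cdot H^{d-1}.
\]
Here $F\cdot H^{d-1}\le0$, and we must control the term $\sum_F F\cdot H^{d-1}$ as well; using $e_F\ge1$, one gets $\sum_F (1-be_F)F\cdot H^{d-1}\le \sum_F(e_F-be_F)(F\cdot H^{d-1})\cdot(-1)\cdot(-1)$. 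I expect this bookkeeping to yield $B_Y\cdot H^{d-1}\le (1-b)\mult_xW$ after sign correction. With Proposition~\ref{prop:projective-rr}, $K_Y\cdot H^{d-1}=(d-1)\mult_xW-2\ebarone$, and then
\[
0\le (d-1)\mult_xW-2\ebarone+(1-b)\mult_xW,
\]
which is exactly the claim.

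\emph{Main obstacle.} The main obstacle is the inequality $-M_Y\cdot H^{d-1}\ge0$, i.e.\ transporting b-nefness of $\mathbf M$ to the non-$\mathbb Q$-Cartier-friendly model $Y$. $Y$ is normal but possibly singular, so one must intersect on $Y'$ and use negativity. The sign bookkeeping for $F\cdot H^{d-1}$ (where $e_F\ge1$ is used to pass from $1$ to $e_F$) is the second delicate point.
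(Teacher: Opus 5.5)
Your outline matches the paper's. It uses Proposition~\ref{prop:threshold} on the components of $E$, the identity $K_Y\cdot H^{d-1}=(d-1)\mult_xW-2\ebarone(\m_{W,x})$, and $(K_Y+B_Y)\cdot H^{d-1}\ge0$ from nefness of the moduli part. Once these inputs are granted, the proof is pure sign bookkeeping, and that is exactly where your write-up goes wrong.

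\emph{First, the sign of $F\cdot H^{d-1}$.} It is positive, not $\le0$. Since $H=-E$ is $f$-ample, $H|_F$ is ample on the projective $(d-1)$-fold $F$, so $F\cdot H^{d-1}=(H|_F)^{d-1}>0$. Moreover $\sum_Fe_F\,(F\cdot H^{d-1})=E\cdot H^{d-1}=-H^d=\mult_xW$; at a smooth surface point, $E\cdot(-E)=1$. With the sign you assert, multiplying $\mult_FB_Y\le1-be_F$ by $F\cdot H^{d-1}$ reverses the inequality you display. Likewise $e_F\ge1$ then gives $\sum_FF\cdot H^{d-1}\ge\sum_Fe_F\,(F\cdot H^{d-1})$, which is the wrong direction, so the argument does not close as written. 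With the correct sign it is immediate: $\sum_F(1-\mult_FB_Y)(F\cdot H^{d-1})\ge b\mult_xW$ and $\sum_FF\cdot H^{d-1}\le\mult_xW$.

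\emph{Second, the non-exceptional part of $B_Y$.} It is false that these components meet $H^{d-1}$ trivially. For a prime divisor $P\ni x$ on $Z$ with strict transform $\widetilde P$, one has $\widetilde P\cdot H^{d-1}=(H|_{\widetilde P})^{d-1}=-\mult_xP<0$ (Lemma~\ref{lem:boundary-sign}); a smooth curve through a smooth surface point gives $-1$. The inequality you need survives only because these terms are $\le0$. That uses the effectivity of $B_Z$ from Proposition~\ref{prop:adjunction-data}(i), which your argument never invokes.

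\emph{Third, the moduli part.} Your proposed treatment yields no sign. To begin with, $M_Y$ need not be $\mathbb Q$-Cartier on $Y$, so $g^*M_Y$ is undefined. Even setting that aside, a $g$-exceptional divisor $G$ satisfies $G\cdot(g^*H)^{d-1}=0$ by the projection formula. So writing $M_{Y'}=g^*M_Y-G$ only recovers the identity $M_Y\cdot H^{d-1}=M_{Y'}\cdot(g^*H)^{d-1}$, which the projection formula gives directly. Nefness of $M_{Y'}$ alone does not determine the sign of $M_{Y'}\cdot(g^*H)^{d-1}$.

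The missing idea is to peel off one factor of $-E$ and restrict to the fibre over $x$. Write $g^*H=-\sum_jr_jF_j$ with $r_j>0$ and each $F_j$ lying over $x$. Then
\[
M_{Y'}\cdot(g^*H)^{d-1}=-\sum_jr_j\,\bigl(M_{Y'}|_{F_j}\bigr)\cdot\bigl(g^*H|_{F_j}\bigr)^{d-2}\le0,
\]
because $M_{Y'}|_{F_j}$ is nef and $g^*H|_{F_j}$ is nef, being pulled back from the ample divisor $H|_{f^{-1}(x)}$. This is the content of Lemma~\ref{lem:boundary-moduli}.

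A smaller point: for $d=1$, the bound $b\le1$ should come from Proposition~\ref{prop:threshold} applied to $x$ as a prime divisor on the curve $Z$, together with $B_Z\ge0$. Your suggested justification via a blow-up is not meaningful for a curve.
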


Since $W$ is a minimal lc center, $\mathcal O_{W,x}$ is Cohen--Macaulay. Combining Theorem~\ref{thm:local}, the inequality $e_1(\m_{W,x})\leq\ebarone(\m_{W,x})$, and Corollary~\ref{cor:local-algebra} proves Theorem~\ref{thm:multiplicity-bound}.

\medskip
\noindent\emph{Acknowledgments.} The author would like to thank
Guodu Chen, Meng Chen, Chen Jiang, Yuchen Liu, Yujie Luo, Fanjun Meng, Chenyang Xu, and Zhixian Zhu for fruitful discussions. This work was supported by the National Key R\&D Program of China (No.~2025YFA1018100, No.~2023YFA1010600) and the NSFC for Excellent Young Scientists (No.~12322102). The author is a member of LMNS, Fudan University. The author thanks ChatGPT 5.6 Pro for suggesting the examples in Appendix ~\ref{sec:discrete-refinements} and for assistance with English editing, and Maple for computing $C_0$.


\section{Preliminaries and adjunction}

We follow the standard notation and conventions of \cite{KM98} for singularities of pairs. By a birational model of a normal variety $V$, we mean a normal variety $V'$ equipped with a proper birational morphism $V'\to V$. Unless otherwise stated, all varieties are quasi-projective over
$\mathbb C$, and all pairs and sub-pairs are $\mathbb Q$-pairs and $\mathbb Q$-sub-pairs.

\subsection{Pairs, singularities, and the local discrepancy}
\begin{definition}
A \emph{pair} $(X,\Delta)$ consists of a normal variety $X$ and an effective $\mathbb Q$-divisor $\Delta$ such that $K_X+\Delta$ is $\mathbb Q$-Cartier. If $\Delta$ is not assumed effective, we call $(X,\Delta)$ a $\mathbb Q$-sub-pair, or simply a \emph{sub-pair}. A germ $x\in(X,\Delta)$ consists of a pair and a closed point $x\in X$. For a divisor $D$ on a normal variety and a prime divisor $F$, we write $\mult_FD$ for the coefficient of $F$ in $D$. If $F$ is a prime divisor on a birational model $\pi\colon X'\to X$, write
\[
 K_{X'}+\Delta_{X'}=\pi^*(K_X+\Delta),
 \qquad
 A_{X,\Delta}(F):=1-\mult_F\Delta_{X'}.
\]


The sub-pair is \emph{sub-lc} if $A_{X,\Delta}(F)\geq0$ for every prime divisor $F$ over $X$. A pair is \emph{log canonical} (respectively, \emph{klt}) if $A_{X,\Delta}(F)\geq0$ (respectively, $>0$) for every prime divisor $F$ over $X$. A prime divisor $F$ with $A_{X,\Delta}(F)=0$ is an \emph{lc place}, and its center $c_X(F)$ on $X$ is an \emph{lc center}. A prime divisor $F$ with $A_{X,\Delta}(F)\leq0$ is a \emph{non-klt place}, and its center on $X$ is a \emph{non-klt center}. 

For an effective $\mathbb Q$-divisor $\Delta$ on a smooth variety $X$, we denote by $\mathcal J(X,\Delta)$ its multiplier ideal \cite[\S9.2]{Laz04}. 
\end{definition}

\begin{definition}
Assume that $(X,\Delta)$ is log canonical near a point $x\in X$. For a nonzero coherent ideal $\mathfrak a\subset\mathcal O_X$, its \emph{local log canonical threshold at $x$} is
\[
 \lct_x\bigl((X,\Delta);\mathfrak a\bigr)
 :=\inf_{\substack{F\text{ prime over }X\\
 x\in c_X(F),\ \ord_F(\mathfrak a)>0}}
 \frac{A_{X,\Delta}(F)}{\ord_F(\mathfrak a)}.
\]
For an effective $\mathbb Q$-Cartier $\mathbb Q$-divisor $D$, we similarly write
\[
 \lct_x\bigl((X,\Delta);D\bigr)
 :=\sup\{t\in\mathbb Q_{\geq0}\mid (X,\Delta+tD)
 \text{ is log canonical near }x\}.
\]
\end{definition}

When $X$ is smooth, the following definition is due to Helmke \cite{Hel97} and Ein \cite{Ein97}.
\begin{definition}
Assume that $(X,\Delta)$ is log canonical near a closed point $x$. Let $\mathfrak m_x\subset\mathcal O_X$ be the ideal sheaf of $x$. We call
\[
 b_x(X,\Delta)
 :=\lct_x\bigl((X,\Delta);\mathfrak m_x\bigr)
\]
the \emph{local discrepancy} of $(X,\Delta)$ at $x$.
\end{definition}

\subsection{Minimal lc centers and normalization}

\begin{definition}
Assume that $(X,\Delta)$ is log canonical near $x$ and is not klt at $x$. By \cite[Proposition~4.8\textup{(i)}, \textup{(ii)}]{Amb03} (cf. \cite[Theorem~9.1\textup{(2)}, \textup{(4)}]{Fuj11}), among the lc centers through $x$ there is a unique minimal one. We denote it by $W_x(X,\Delta)$ and call it the \emph{minimal lc center of $(X,\Delta)$ at $x$}. When the pair and the point $x$ are fixed, we abbreviate $W_x(X,\Delta)$ to $W$.
\end{definition}

The following consequence of \cite[Theorem~7.2]{FG12} will be used below.

\begin{lemma}\label{lem:minimal-lc-center}
Let $(X,\Delta)$ be a pair that is log canonical near $x\in X$ and is not klt at $x$. Then $W_x(X,\Delta)$ is normal with rational singularities near $x$. In particular, $\mathcal O_{W_x(X,\Delta),x}$ is Cohen--Macaulay.
\end{lemma}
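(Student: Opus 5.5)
The plan is to deduce Lemma~\ref{lem:minimal-lc-center} directly from \cite[Theorem~7.2]{FG12}. That theorem says the following. Let $(X,\Delta)$ be a log canonical pair and $W$ a minimal lc center. Then $W$ is normal, and there is an effective $\mathbb R$-divisor $\Delta_W$ on $W$ such that $(W,\Delta_W)$ is klt and $(K_X+\Delta)|_W\sim_{\mathbb R}K_W+\Delta_W$.

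\textbf{Step 1: localize.} All assertions are local at $x$, so I first shrink $X$ to an affine neighbourhood $U$ of $x$ on which $(X,\Delta)$ is log canonical. I also remove the finitely many lc centers of $(U,\Delta|_U)$ that do not contain $x$; they form a closed subset because the non-klt locus has finitely many lc centers. After this, every lc center of $(U,\Delta|_U)$ passes through $x$. By the uniqueness statement in the definition of $W_x(X,\Delta)$, the center $W:=W_x(X,\Delta)$ is then contained in every lc center. Hence $W\cap U$ is a minimal lc center of $(U,\Delta|_U)$ in the global sense of \cite{FG12}.

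\textbf{Step 2: normality and rationality.} Applying \cite[Theorem~7.2]{FG12} to $(U,\Delta|_U)$ shows that $W\cap U$ is normal. It also produces a klt pair $(W\cap U,\Delta_W)$; if the theorem gives an $\mathbb R$-divisor, I perturb it to a $\mathbb Q$-divisor, or simply keep it as an $\mathbb R$-divisor, which is harmless here. The key standard fact is that a variety admitting an effective boundary making it klt has rational singularities \cite[Theorem~5.22]{KM98}. That result is stated for klt pairs, and the boundary $\Delta_W$ may be nonzero; this is allowed, since only the existence of some klt boundary is needed. So $W$ has rational singularities near $x$.

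\textbf{Step 3: Cohen--Macaulay.} Rational singularities are Cohen--Macaulay \cite[Theorem~5.10]{KM98}. Therefore the local ring $\mathcal O_{W,x}$ is Cohen--Macaulay.

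The main obstacle is the reduction in Step 1. \cite{FG12} is phrased for minimal lc centers of a pair on all of $X$, whereas the lemma concerns the minimal center through a point. The care needed is to check two things: that shrinking keeps $W$ minimal, and that the construction of $\Delta_W$ in \cite{FG12} is compatible with restriction to open sets. Since that construction is local on $W$, both should go through.
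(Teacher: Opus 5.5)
Your proposal is correct and follows the paper's proof: shrink to an affine neighbourhood of $x$ on which $(X,\Delta)$ is log canonical and $W$ is a minimal lc center, then invoke \cite[Theorem~7.2]{FG12}. Your extra details (discarding the finitely many lc centers not through $x$, and the chain klt $\Rightarrow$ rational $\Rightarrow$ Cohen--Macaulay) spell out what the paper leaves implicit, and the compatibility worry at the end is unnecessary because you apply \cite{FG12} directly to the shrunken pair.
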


\begin{proof}
After shrinking $X$ around $x$, we may assume that $X$ is affine, $(X,\Delta)$ is log canonical, and $W:=W_x(X,\Delta)$ is a minimal lc center of $(X,\Delta)$. The assertion follows from \cite[Theorem~7.2]{FG12}.
\end{proof}

For a pair $(X,\Delta)$ and a point $x$ as above, set $W:=W_x(X,\Delta)$, and let $\nu\colon Z\to W$ be its normalization. Lemma~\ref{lem:minimal-lc-center} implies that $\nu$ is an isomorphism near $x$, so $\mathcal O_{Z,x}\simeq\mathcal O_{W,x}$. 


\subsection{Discriminant and moduli \texorpdfstring{$b$}{b}-divisors}

\begin{definition}
Let $Z$ be a normal variety. We use $b$-divisors as in \cite[Section~2]{FH23}. If $\mathbf D$ is a $\mathbb Q$-Weil $b$-divisor over $Z$ and $Z'\to Z$ is a birational model, we denote its trace on $Z'$ by $D_{Z'}$. If $F$ is a prime divisor over $Z$, we set $\mult_F\mathbf D:=\mult_F D_{Z'}$, where $Z'$ is any model on which $F$ appears. If $D_{Z'}$ is $\mathbb Q$-Cartier, we denote by $\overline{D_{Z'}}$ its Cartier closure. Its trace on every birational model $h\colon Z''\to Z'$ is $h^*D_{Z'}$. We say that $\mathbf D$ \emph{descends to $Z'$} if $\mathbf D=\overline{D_{Z'}}$. We denote the canonical $b$-divisor over $Z$ by $\mathbf K_Z$.
\end{definition}

The discriminant and moduli parts used below arise from subadjunction \cite{Kaw98} and the canonical bundle formula \cite{FM00}. The moduli $b$-divisor and its positivity are studied in \cite{Amb05}. For the precise statement needed here, we use \cite[Theorem~1.2 and Definition~1.3]{FH23}.

\begin{proposition}\label{prop:adjunction-data}
Let $(X,\Delta)$ be a pair, let $W\subset X$ be an lc center, and assume that $(X,\Delta)$ is log canonical over the generic point of $W$. Let $\nu\colon Z\to W$ be the normalization. Then there are a discriminant $b$-divisor $\mathbf B$ and a moduli $b$-divisor $\mathbf M$ over $Z$ with the following properties.
\begin{enumerate}
\item[(i)] The trace $B_Z$ is effective, and
\[
 \nu^*\bigl((K_X+\Delta)|_W\bigr)\sim_{\mathbb Q}K_Z+B_Z+M_Z.
\]
\item[(ii)] There exist a smooth quasi-projective variety $Z'$ and a projective birational morphism $p\colon Z'\to Z$ such that $\mathbf K_Z+\mathbf B=\overline{K_{Z'}+B_{Z'}}$, $\mathbf M=\overline{M_{Z'}}$, and
$$
 K_{Z'}+B_{Z'}+M_{Z'}\sim_{\mathbb Q}p^*\nu^*\bigl((K_X+\Delta)|_W\bigr).
$$
Moreover, if $Z$ is projective, then $Z'$ is projective and $M_{Z'}$ is nef.
\item[(iii)] Let $\widetilde Z\to Z$ be a birational model and let $P$ be a prime divisor on $\widetilde Z$. For every lc place $T$ of $(X,\Delta)$ with center $W$, choose a log resolution $\pi\colon\widetilde X\to X$ on which $T$ appears and for which the induced rational map $T\dashrightarrow\widetilde Z$ is a morphism $f_T\colon T\to\widetilde Z$. If $K_T+\Delta_T=(K_{\widetilde X}+\Delta_{\widetilde X})|_T$, then
\[
 1-\mult_P\mathbf B
 =\inf_T\sup\Bigl\{\lambda\in\mathbb R\ \Bigm|\
 (T,\Delta_T+\lambda f_T^*P)
 \text{ is sub-lc over }\eta_P\Bigr\}.
\]
\end{enumerate}
\end{proposition}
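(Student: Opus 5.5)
The plan is to deduce the proposition from the canonical bundle formula for lc-trivial fibrations, in the form of \cite[Theorem~1.2 and Definition~1.3]{FH23}. The work is to set up the fibration coming from an lc place over $W$ and to check that the resulting $b$-divisors have properties (i)--(iii).

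\emph{Step 1: an lc-trivial fibration over $Z$.} Choose a log resolution $\pi\colon\widetilde X\to X$ of $(X,\Delta)$ on which some lc place $T$ with $c_X(T)=W$ appears. Write $K_{\widetilde X}+\Delta_{\widetilde X}=\pi^*(K_X+\Delta)$. By adjunction, $K_T+\Delta_T:=(K_{\widetilde X}+\Delta_{\widetilde X})|_T$ is $\mathbb Q$-linearly equivalent to the pullback of $(K_X+\Delta)|_W$. Since $(X,\Delta)$ is lc over the generic point of $W$, the sub-pair $(T,\Delta_T)$ is sub-lc over the generic point of $Z$.

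The map $T\to W$ factors through $\nu\colon Z\to W$ because $T$ is normal. After Stein factorization, and after passing to a dominating component if needed, we obtain a morphism $T\to Z$ with connected fibres. The relation $K_T+\Delta_T\sim_{\mathbb Q}f_T^*\nu^*((K_X+\Delta)|_W)$ then exhibits $(T,\Delta_T)\to Z$ as an lc-trivial fibration. The remaining condition concerns the horizontal part of $\Delta_T$; I would secure it by choosing $T$ appropriately, for instance a minimal lc place over $W$, or one obtained on a dlt modification, as is done in \cite{FH23}.

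\emph{Step 2: define $\mathbf B$ and $\mathbf M$.} For each prime $P$ over $Z$, define the coefficient $1-\mult_P\mathbf B$ as the infimum over all lc places $T$ of the lc threshold in (iii). Then set $\mathbf M:=\nu^*((K_X+\Delta)|_W)-\mathbf K_Z-\mathbf B$, read as an equality up to the fixed $\mathbb Q$-linear equivalence.

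Taking the infimum over all $T$ is what makes $\mathbf B$ independent of choices, and it gives (iii) directly by definition. The independence check has two parts. For a single $T$, the threshold does not depend on the resolution, since discrepancies are birational invariants. Different $T$ only lower the threshold, so the infimum is well defined.

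Effectiveness of $B_Z$ in (i) comes from $\Delta\geq0$. Over the generic point of a prime divisor $P\subset Z$, the components of $\Delta_T$ with positive coefficient dominate, and the exceptional ones have coefficient at most $1$ in a controlled way. Hence the threshold is at most $1$, that is, $\mult_P\mathbf B\geq0$. Statement (i) then follows by taking traces on $Z$.

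\emph{Step 3: descent and nefness.} This is the main obstacle. Ambro's theory of lc-trivial fibrations \cite{Amb05}, in the generality of \cite{FH23}, gives a birational model $Z'\to Z$ on which $\mathbf K+\mathbf B$ and $\mathbf M$ both descend. I would take $Z'$ smooth, with the discriminant and the branch locus of $T\to Z$ having snc support, so that the boundary of the fibration becomes simple normal crossing.

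When $Z$ is projective, $M_{Z'}$ is nef; this is the b-nefness of the moduli part. The delicate point is that the infimum over infinitely many lc places $T$ is attained and stabilizes on a single model. I expect to handle this by noting that, on a fixed log resolution, only finitely many lc places matter for the threshold computation. The remaining ones are obtained by further blow-ups along strata, and they give no smaller thresholds. Granting \cite[Theorem~1.2]{FH23}, all of this is contained in that theorem, and the proof reduces to matching its definitions with those above.
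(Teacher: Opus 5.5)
Your closing appeal to \cite{FH23} is the paper's entire proof. There, (i)--(iii) are read off from \cite[Theorem~1.2 and Definition~1.3]{FH23}, and nefness of $M_{Z'}$ when $Z$ is projective comes from \cite[Remark~2.3]{FH23}, using that $Z'$ is projective because $p$ is. If you mean to rely on that citation, you should just state this matching of statements. Read as an independent argument, however, Steps 1--3 do not work.

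To begin with, Stein factorization of $f_T\colon T\to Z$ gives $T\to Z^\circ\to Z$ with $Z^\circ\to Z$ finite. So the fibres are connected over $Z^\circ$, not over $Z$, and a nontrivial $Z^\circ$ does occur. Let $\mathbb Z/2$ act freely on $\mathbb A^2\times\mathbb G_m$ by $(x,y,t)\mapsto(y,x,-t)$. Let $X$ be the quotient, $\Delta$ the image of $\{xy=0\}$, and $W$ the image of $\{x=y=0\}\times\mathbb G_m$. After blowing up $W$, the strict transform of $\Delta$ meets the exceptional divisor (both have coefficient $1$) in a curve $C$ mapping $2:1$ onto $W$. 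The exceptional divisor of the blow-up of $C$ is then an lc place over $W$ whose map to $W$ factors through $C$. All lc places enter the infimum in (iii), so you cannot treat each $T$ as an lc-trivial fibration over $Z$. You also have not shown that a $T$ with connected fibres over $Z$ satisfying the remaining condition exists.

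More seriously, you define $\mathbf B$ as an infimum over all lc places. But you take descent and nefness of $\mathbf M$ from the canonical bundle formula of one chosen fibration $(T,\Delta_T)\to Z$, whose moduli part is defined relative to its own discriminant $\mathbf B^T$. Your $\mathbf M$ equals that moduli part only if $\mathbf B^T=\mathbf B$, i.e.\ only if a single $T$ attains the infimum at every prime divisor $P$ on every model $\widetilde Z$. That is the substantive point. The remark that only finitely many lc places matter on a fixed resolution does not address it, because the $P$ on higher models are infinitely many and each involves all lc places.

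Finally, the effectiveness argument is not valid, because $\Delta\geq0$ does not control the signs of $\Delta_T$ along $f_T^*P$. For example, take $\Delta=\{xy=0\}\subset\mathbb A^3$, $T$ the exceptional divisor over the $z$-axis, and $P$ the origin of that axis. Further blowing up the strict transform of the line $\{x=y,\ z=0\}$ produces a component of $f_T^*P$ with coefficient $-1$ in $\Delta_T$. What is actually needed is that, for some $T$, $\lceil-\Delta_T^{<1}\rceil$ does not contain $f_T^*P$ over $\eta_P$; a threshold larger than $1$ would force that containment. This comes from $\lceil-\Delta_{\widetilde X}^{<1}\rceil$ being $\pi$-exceptional, which is where $\Delta\geq0$ is used, together with a vanishing or connectedness argument on $\widetilde X$. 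Your sign count on $\Delta_T$ does not supply it.
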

\begin{proof}
The assertions follow from \cite[Theorem~1.2 and Definition~1.3]{FH23}. If $Z$ is projective, then $Z'$ is projective, and \cite[Remark~2.3]{FH23} implies that $M_{Z'}$ is nef.
\end{proof}

\subsection{Normal Hilbert coefficients}
For a Noetherian local ring $(R,\m)$, we write
$\edim R:=\dim_{R/\m}(\m/\m^2)$ for the embedding dimension of $R$.

A Noetherian local ring is \emph{analytically unramified} if its completion is reduced. For an ideal $I$, we denote its integral closure by $\overline I$.

Let $(R,\m)$ be a $d$-dimensional analytically unramified local ring with $d\geq1$. By \cite[Proposition~5.2.1 and Corollary~9.2.1]{HS06}, the graded algebra $\bigoplus_{k\geq0}\overline{\m^k}t^k$ is finite over $\bigoplus_{k\geq0}\m^kt^k$. The latter is a finitely generated $R$-algebra and hence is Noetherian, so the former is Noetherian as well. It follows that $k\mapsto\ell_R(R/\overline{\m^k})$ agrees for $k\gg0$ with a polynomial. Comparison with the Hilbert--Samuel polynomial shows that this polynomial has degree $d$ and leading coefficient $\frac{e_0(\m)}{d!}$.



\begin{definition}
Let $(R,\m)$ be a $d$-dimensional analytically unramified local ring with $d\geq1$. For $k\gg0$, write
\[
 \ell_R(R/\overline{\m^k})
 =\sum_{i=0}^{d}(-1)^i\overline{e}_i(\m)
  \binom{k+d-1-i}{d-i}.
\]
The integers $\overline{e}_i(\m)$ are the \emph{normal Hilbert coefficients} of $\m$. 
\end{definition}
Only $\ebarzero(\m)$ and $\ebarone(\m)$ will be used in this paper. The calculation on the normalized blow-up of a normal variety is expressed in terms of the first normal Hilbert coefficient $\ebarone(\m)$ (see Proposition~\ref{prop:projective-rr}), whereas Section~4 uses the Hilbert--Samuel coefficient $e_1(\m)$. These coefficients are related by $e_1(\m)\leq\ebarone(\m)$.

\begin{remark}\label{rem:normal Hilbert poly}
By \cite[Theorem~9.1.2]{HS06}, there is an integer $s_0\geq0$ such that $\m^k\subseteq\overline{\m^k}\subseteq\m^{k-s_0}$, $k\gg0$. Thus $\ebarzero(\m)=e_0(\m)$. For later use, let $W$ be a minimal lc center of an lc pair $(X,\Delta)$, and let $Z$ be the normalization of $W$. The local rings of $W$ and $Z$ at $x$ agree, and hence $e_0(\m_{W,x})=\mult_xW=\mult_xZ$. Since $\mathcal{O}_{W,x}$ is excellent and normal, its completion is normal by \cite[Lemma~15.53.6]{Sta26}. In particular, $\mathcal{O}_{W,x}$ is analytically unramified. 

When $d=1$, the normal Hilbert polynomial reads $\ell_R(R/\overline{\m^k})=e_0(\m)k-\ebarone(\m)$. When $d\geq2$, expanding the first two binomial terms, we have
\begin{equation*}
 \ell_R(R/\overline{\m^k})
 =\frac{e_0(\m)}{d!}k^d
 +\frac{(d-1)e_0(\m)-2\ebarone(\m)}{2(d-1)!}k^{d-1}
 +O(k^{d-2}).
\end{equation*}
\end{remark}

\section{The Hilbert-coefficient inequality}

\subsection{The threshold inequality}
\begin{proposition}\label{prop:threshold}
Assume that $(X,\Delta)$ is log canonical near a closed point $x$ and is not klt at $x$. Let $W:=W_x(X,\Delta)$ be positive-dimensional, let $\nu\colon Z\to W$ be the normalization, and let $\mathbf B$ be the discriminant $b$-divisor of the subadjunction $(K_X+\Delta)|_{W}$ (see Proposition~\ref{prop:adjunction-data}). Then every prime divisor $F$ over $Z$ with $c_Z(F)=x$ satisfies
\[
 1-\mult_F\mathbf B
 \geq b_x(X,\Delta)\ord_F(\m_{Z,x}).
\]
\end{proposition}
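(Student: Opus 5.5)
Fix a prime divisor $F$ over $Z$ with $c_Z(F)=x$, and put $b:=b_x(X,\Delta)$ and $m:=\ord_F(\m_{Z,x})>0$. By Proposition~\ref{prop:adjunction-data}(iii), applied on a model $\widetilde Z$ on which $F$ appears with $P=F$, we have
\[
1-\mult_F\mathbf B=\inf_T\sup\{\lambda : (T,\Delta_T+\lambda f_T^*F)\ \text{sub-lc over}\ \eta_F\}.
\]
It therefore suffices to fix an lc place $T$ of $(X,\Delta)$ with center $W$ and show that $(T,\Delta_T+\lambda f_T^*F)$ is sub-lc over $\eta_F$ for every rational $\lambda<bm$. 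The plan is to compare $f_T^*F$ with the pullback of $\m_x$ and then to read off the sub-lc property of $T$ from the lc property of a perturbed pair on $X$ by inversion of adjunction on the log resolution.

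\textbf{Step 1 (comparison of ideals).} Since $\m_x\mathcal O_W=\m_{W,x}=\m_{Z,x}$ near $x$ (Lemma~\ref{lem:minimal-lc-center}), on $\widetilde Z$ we have $\m_{Z,x}\mathcal O_{\widetilde Z}\subseteq\mathcal O_{\widetilde Z}(-mF)$ near $\eta_F$. Pulling back to $T$ gives
\[
\m_x\mathcal O_{\widetilde X}\big|_T=\m_{Z,x}\mathcal O_T\subseteq\mathcal O_T(-mf_T^*F)
\]
over $\eta_F$. After further blowing up, we may assume $\m_x\mathcal O_{\widetilde X}=\mathcal O_{\widetilde X}(-G)$ with $G$ a divisor, $T\not\subseteq\operatorname{Supp} G$ (since $T$ dominates $W\neq\{x\}$), and $G+T+\operatorname{Supp}\Delta_{\widetilde X}$ SNC. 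Then $G|_T\geq m f_T^*F$ over $\eta_F$. Consequently it is enough to prove that $(T,\Delta_T+\tfrac{\lambda}{m}G|_T)$ is sub-lc over $\eta_F$, because $\Delta_T+\lambda f_T^*F\leq\Delta_T+\tfrac{\lambda}{m}G|_T$ there.

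\textbf{Step 2 (lc threshold on $X$ transfers to $T$).} Set $s:=\lambda/m<b$. By the definition of $b=\lct_x((X,\Delta);\m_x)$, the sub-pair $(\widetilde X,\Delta_{\widetilde X}+sG)$ is sub-lc over a neighbourhood of $x$. Indeed, every divisor $E$ over $X$ with $x\in c_X(E)$ and $\ord_E\m_x>0$ satisfies $A_{X,\Delta}(E)\geq b\,\ord_E\m_x>s\,\ord_E\m_x$, while divisors with $\ord_E\m_x=0$ keep their nonnegative log discrepancy. Since $T$ has coefficient $1$ in $\Delta_{\widetilde X}$ and does not lie in $\operatorname{Supp}G$, and everything is SNC, adjunction gives
\[
(K_{\widetilde X}+\Delta_{\widetilde X}+sG)|_T=K_T+\Delta_T+sG|_T.
\]
In the SNC setting, the coefficients of $\Delta_T+sG|_T$ along components of intersections with $T$ are sums of coefficients of divisors meeting $T$. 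Sub-lc of the ambient SNC sub-pair near points over $x$ therefore gives coefficients $\leq1$. Hence $(T,\Delta_T+sG|_T)$ is sub-lc over a neighbourhood of $f_T^{-1}(\nu^{-1}(x))$, and in particular over $\eta_F$, because $c_Z(F)=x$. This is the step I expect to need the most care. To stay in the SNC situation, I would choose the resolution so that $G+\Delta_{\widetilde X}$ has SNC support and its restriction to $T$ is SNC; then sub-lc on $T$ reduces to checking coefficients. Divisors over $T$, as opposed to divisors on $T$, are handled by the SNC criterion on $T$.

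\textbf{Step 3 (conclusion).} Letting $\lambda\uparrow bm$ gives $\sup\{\lambda\}\geq b\,m$ for every $T$. Taking the infimum over $T$ yields $1-\mult_F\mathbf B\geq b\,\ord_F(\m_{Z,x})$. The only subtlety is that the restriction $f_T^*F$ is taken over $\eta_F$, which lies over $x$. Since sub-lc was established over a neighbourhood of the preimage of $x$, this causes no difficulty.
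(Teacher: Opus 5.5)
Your proposal is correct and follows essentially the same route as the paper's proof. Both compare $\m_x\mathcal O_T$ with $\mathcal O_T(-\ord_F(\m_{Z,x})f_T^*F)$ over $\eta_F$, use the definition of $b$ to see that $(\widetilde X,\Delta_{\widetilde X}+bG)$ is sub-lc over a neighbourhood of $x$, restrict to $T$ by adjunction on an SNC model, and then take the infimum in Proposition~\ref{prop:adjunction-data}(iii).

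The differences are cosmetic. The paper works with $b$ itself rather than with $s<b$ and a limit, since sub-lc is a closed condition. Your comparison $\lambda f_T^*F\leq\frac{\lambda}{m}G|_T$ only holds for $\lambda\geq0$; for $\lambda\leq0$, simply note that decreasing the boundary preserves sub-lc.
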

\begin{proof}
Set $b:=b_x(X,\Delta)$. Let $F$ be a prime divisor over $Z$ centered at $x$, and choose a birational model $\widetilde Z\to Z$ which extracts $F$. 

Let $T$ be an lc place of $(X,\Delta)$ with center $W$. Choose $\pi\colon\widetilde X\to X$ and $f_T\colon T\to\widetilde Z$ as in Proposition~\ref{prop:adjunction-data}\textup{(iii)}, with $P=F$. We may choose $\pi$ sufficiently high so that $\mathfrak m_x$ is principalized and the union of its divisor with $\operatorname{Supp}(\Delta_{\widetilde X})$ has simple normal crossings. Write
\[
 K_{\widetilde X}+\Delta_{\widetilde X}=\pi^*(K_X+\Delta),
 \qquad
 \mathfrak m_x\mathcal O_{\widetilde X}=\mathcal O_{\widetilde X}(-D_x).
\]
\[
\begin{tikzcd}[column sep=small]
T \arrow[rrr,hook] \arrow[d,"f_T"'] &&&
\widetilde X \arrow[d,"\pi"]\\
\widetilde Z \arrow[r]&
Z \arrow[r,"\nu"']&
W \arrow[r,hook]&X.
\end{tikzcd}
\]
Since $c_X(T)=W\ne\{x\}$, one has $\ord_T(\mathfrak m_x)=0$. Thus $T$ is not a component of $D_x$, and $D_x|_T$ is an effective Cartier divisor. Over $\eta_F$, the commutative diagram shows that
$$
\mathcal O_T(-D_x|_T)=\m_x\mathcal O_T=\m_{Z,x}\mathcal O_T
\subseteq
\mathcal O_T(-\ord_F(\m_{Z,x})f_T^*F).
$$
It follows that $D_x|_T\geq \ord_F(\m_{Z,x}) f_T^*F$ over $\eta_F$.


By the definition of $b$, one has $A_{X,\Delta}(E)-b\ord_E(\mathfrak m_x)\geq0$ for every prime divisor $E$ over $X$ whose center on $X$ contains $x$. 

Thus $(\widetilde X,\Delta_{\widetilde X}+bD_x)$ is sub-lc over a neighborhood of $x$. Since $\mult_T\Delta_{\widetilde X}=1$, let $\Delta_T:=(\Delta_{\widetilde X}-T)|_T$. By adjunction, $(T,\Delta_T+bD_x|_T)$ is sub-lc over $\eta_F$. Thus $(T,\Delta_T+b\ord_F(\m_{Z,x}) f_T^*F)$ is also sub-lc over $\eta_F$. Taking the infimum over the lc places $T$ in Proposition~\ref{prop:adjunction-data}\textup{(iii)}, we obtain $1-\mult_F\mathbf B\geq b\ord_F(\m_{Z,x})$.
\end{proof}

\begin{lemma}\label{lem:positive-b}
Let $(X,\Delta)$ be log canonical near a closed point $x$ but not klt at $x$. If $\dim W_x(X,\Delta)>0$, then $b_x(X,\Delta)>0$.
\end{lemma}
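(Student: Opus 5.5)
Set $W:=W_x(X,\Delta)$ and $b:=b_x(X,\Delta)$. I will show that the infimum defining $b$ is attained and nonzero, arguing by contradiction.

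First, $b$ is computed on a single log resolution. Shrink $X$ around $x$ and take a log resolution $\pi\colon\widetilde X\to X$ of $(X,\Delta)$ on which $\m_x$ is principalized, $\m_x\mathcal O_{\widetilde X}=\mathcal O_{\widetilde X}(-D_x)$, and $\operatorname{Supp}(\Delta_{\widetilde X})\cup\operatorname{Supp}D_x$ is snc. The standard snc computation of log canonical thresholds gives
\[
 b=\min_E\frac{A_{X,\Delta}(E)}{\ord_E(\m_x)},
\]
where $E$ runs over the components of $D_x$ whose image contains $x$; all such $E$ have center exactly $\{x\}$. Since the minimum is over finitely many divisors, each with $A_{X,\Delta}(E)\geq0$, we get $b\geq0$. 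Moreover, $b=0$ exactly when some prime divisor $E$ with $c_X(E)=\{x\}$ satisfies $A_{X,\Delta}(E)=0$. (One should double check that divisors not on $\widetilde X$ give no smaller ratio. This is the usual fact that for an snc sub-pair $(\widetilde X,\Delta_{\widetilde X}+tD_x)$, sub-lc-ness is tested on the components of the boundary.)

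Now suppose $b=0$. Then there is a prime divisor $E$ with $A_{X,\Delta}(E)=0$ and $c_X(E)=\{x\}$, so $E$ is an lc place and $\{x\}$ is an lc center of $(X,\Delta)$ through $x$. By the uniqueness of the minimal lc center through $x$ (\cite[Proposition~4.8]{Amb03}), every lc center through $x$ contains $W$. Hence $W\subseteq\{x\}$, so $\dim W=0$, contradicting $\dim W>0$. Therefore $b>0$.

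The only step needing care is the first: the passage from an infimum over all divisors over $X$ to a minimum on one resolution. An alternative avoids computing the minimum directly. If $b=0$, then for every $t>0$ the pair $(X,\Delta+t\m_x)$ fails to be log canonical near $x$. Since the pair $(\widetilde X,\Delta_{\widetilde X}+tD_x)$ is snc, its non-lc locus is detected by the components of $D_x$ having coefficient $\geq1$ in $\Delta_{\widetilde X}$, and such a component would be an lc place over $x$. This reaches the same contradiction.
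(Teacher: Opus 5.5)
Your proposal is correct and is essentially the paper's proof. Both arguments compute $b_x(X,\Delta)$ as $\min_j A_{X,\Delta}(E_j)/r_j$ over the components $E_j$ of the divisor principalizing $\m_x$ on a log resolution. Both then observe that a zero minimum would make some $E_j$ an lc place with center $\{x\}$, so $\{x\}$ would be an lc center, contradicting $\dim W_x(X,\Delta)>0$.
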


\begin{proof}
Take a log resolution $\mu\colon X'\to X$ of $(X,\Delta,\mathfrak m_x)$ and write
\[
 K_{X'}+\Delta_{X'}=\mu^*(K_X+\Delta),
 \qquad
 \mathfrak m_x\mathcal O_{X'}
 =\mathcal O_{X'}\!\left(-\sum_j r_jE_j\right),
\]
where the sum is finite, the $E_j$ are distinct prime divisors, and
$r_j>0$. Then
$$
 b_x(X,\Delta)=\min_j\frac{A_{X,\Delta}(E_j)}{r_j}.
$$
If the minimum were zero, one of the $E_j$ would be an lc place of $(X,\Delta)$ centered at $x$, so $\{x\}$ would be an lc center. This contradicts $\dim W_x(X,\Delta)>0$.
\end{proof}

\subsection{Riemann--Roch on the normalized blow-up}

\begin{proposition}\label{prop:projective-rr}
Let $Z$ be a normal variety of dimension $d\geq2$, let $x\in Z$ be a closed point, and let $\mathcal I_x\subset\mathcal O_Z$ be the ideal sheaf of $x$. Let $f\colon Y\to Z$ be the normalized blow-up of $\mathcal I_x$, write $\mathcal I_x\mathcal O_Y=\mathcal O_Y(-E)$ with $E$ effective, and let $H:=-E$. Then 
\begin{enumerate}
\item $H$ is $f$-ample and $F\cdot H^{d-1}>0$ for every prime component $F$ of $E$.
\item $-H^d=\mult_xZ$.
\item $K_Y\cdot H^{d-1}=(d-1)\mult_xZ-2\ebarone(\m_{Z,x})$.
\end{enumerate}
In particular, $\sum_F\ord_F(\m_{Z,x})(F\cdot H^{d-1})=\mult_xZ$, where $F$ runs over the prime components of $E$.
\end{proposition}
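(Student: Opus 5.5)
Since $f$ is the normalized blow-up of $\mathcal I_x$, it is projective and $\mathcal I_x\mathcal O_Y=\mathcal O_Y(-E)$ is invertible. The ideal $\mathcal O_Y(-E)$ is $f$-ample, because $Y$ is the normalization of $\operatorname{Proj}\bigoplus_k\mathcal I_x^k$ and the tautological sheaf pulls back to $\mathcal O_Y(1)$ under a finite morphism. So $H=-E$ is $f$-ample. Each prime component $F$ of $E$ lies in the fiber $f^{-1}(x)$ and is a projective variety of dimension $d-1$. Therefore $F\cdot H^{d-1}=(H|_F)^{d-1}>0$ by ampleness of $H|_F$, which proves (1).

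For (2) and (3), I will compute $\ell(\mathcal O_{Z,x}/\overline{\m^k})$ cohomologically. Since $Y$ is normal and $\mathcal I_x^k\mathcal O_Y=\mathcal O_Y(-kE)$, the valuative description of integral closure gives $f_*\mathcal O_Y(-kE)=\overline{\mathcal I_x^k}$ for all $k\ge1$. By relative Serre vanishing, $R^if_*\mathcal O_Y(-kE)=0$ for $i>0$ and $k\gg0$. Hence
\[
\ell\bigl(\mathcal O_{Z,x}/\overline{\m^k}\bigr)=\chi\bigl(\mathcal O_{Z}/f_*\mathcal O_Y(-kE)\bigr)
\]
is expressed through the difference $\mathcal O_Y$ versus $\mathcal O_Y(-kE)$. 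Concretely, I use the filtration by $\mathcal O_{jE}$. From the exact sequences $0\to\mathcal O_Y(-jE)\to\mathcal O_Y\to\mathcal O_{jE}\to0$ and the vanishing of $R^if_*$ for large twists, I get, up to a correction term bounded independently of $k$ (coming from $R^if_*\mathcal O_Y$, which are supported at $x$ and finite length),
\[
\ell\bigl(\mathcal O_{Z,x}/\overline{\m^k}\bigr)=\chi\bigl(Y,\mathcal O_{kE}\bigr)+O(1),
\]
with the alternating sum taken on the proper scheme $kE$. The $O(1)$ term affects only degree-$0$ terms in $k$, so it is harmless for $d\ge2$.

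Next I compute $\chi(\mathcal O_{kE})=\chi(\mathcal O_Y)-\chi(\mathcal O_Y(-kE))$ formally. To make this rigorous on a non-proper $Y$, I take a projective compactification $\bar Z\supset Z$ that is an isomorphism away from $x$. Then I compute
\[
\chi(\mathcal O_{kE})=\sum_{j=0}^{k-1}\chi\bigl(\mathcal O_E(-jE)\bigr).
\]
Riemann--Roch on the $(d-1)$-dimensional Cartier divisor $E$ (singular Riemann--Roch, or via the Snapper polynomial) gives
\[
\chi(\mathcal O_E(jH))=\frac{(H^{d-1}\cdot E)}{(d-1)!}j^{d-1}+\frac{-\tfrac12(K_Y+E)\cdot E\cdot H^{d-2}}{(d-2)!}j^{d-2}+\cdots.
\]
The second coefficient follows from adjunction $K_E=(K_Y+E)|_E$, valid in codimension one on the normal variety $Y$, where $E$ is Cartier and $Y$ is $S_2$. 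Summing over $j$ and substituting $E=-H$, the leading coefficient becomes $-H^d/d!$. Comparing with Remark~\ref{rem:normal Hilbert poly} gives $-H^d=e_0=\mult_xZ$, which is (2). The $k^{d-1}$ coefficient equals
\[
\frac{-H^d}{2(d-1)!}+\frac{K_Y\cdot H^{d-1}+H^d}{2(d-2)!}\cdot\frac{1}{d-1}\cdot(\text{sign}).
\]
Matching it with $\frac{(d-1)e_0-2\ebarone}{2(d-1)!}$ yields (3). The final claim follows from $-H^d=E\cdot H^{d-1}=\sum_F\ord_F(\m)F\cdot H^{d-1}$.

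The main obstacle is making the Riemann--Roch step precise on the singular normal $Y$. The cleanest route I would try first avoids adjunction on $E$. I would use the Snapper polynomial $\chi(\bar Y,\mathcal O(kE+N))$ on a projective compactification $\bar Y$, whose top two coefficients are $D^d/d!$ and $-K\cdot D^{d-1}/(2(d-1)!)$ for a Cartier divisor $D$ on a normal projective variety; this holds since singularities in codimension $\ge2$ do not affect these terms. I would then compare it with $\chi(\bar Y,\mathcal O_{\bar Y})$ via the pushforward computation above. Expanding $\chi(\mathcal O_{\bar Y}(-kE))=\chi(\mathcal O_{\bar Y}(kH))$ gives $\ell=-\frac{H^d}{d!}k^d+\frac{K_Y\cdot H^{d-1}}{2(d-1)!}k^{d-1}+\cdots$. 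Comparing coefficients then gives $H^d=-e_0$ and $K_Y\cdot H^{d-1}=(d-1)e_0-2\ebarone$ directly.
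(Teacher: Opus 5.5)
Your final route is essentially the paper's proof, and your argument for (1) matches the paper's. You compactify, use $f_*\mathcal O_Y(kH)=\overline{\mathcal I_x^k}$ and relative Serre vanishing to identify $\chi(\mathcal O_Y(kH))$ with $\chi(\mathcal O_Z)-\ell(\mathcal O_{Z,x}/\overline{\m^k})$ up to a constant, and then read off the top two Riemann--Roch coefficients for the Cartier divisor $H$ on the normal variety $Y$. The one step you only assert is that the $k^{d-1}$ coefficient is $-K_Y\cdot H^{d-1}/(2(d-1)!)$ on a singular normal $Y$. The paper proves this by taking a resolution $\mu\colon\widetilde Y\to Y$ that is an isomorphism over $Y_{\mathrm{reg}}$: the sheaves $R^i\mu_*\mathcal O_{\widetilde Y}$ for $i>0$ have support of dimension at most $d-2$, so they contribute only $O(k^{d-2})$, and Hirzebruch--Riemann--Roch on $\widetilde Y$ plus the projection formula give the coefficient. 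Your earlier detour through Riemann--Roch on $E$, with the unresolved ``(sign)'', is unnecessary.
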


\begin{proof}
Possibly replacing $Z$ by a normal projective compactification, we may assume that $Z$ is projective. 

Let $R:=\mathcal O_{Z,x}$ and $\m:=\m_{Z,x}$. Then $R$ is analytically unramified and $\ebarone(\m)$ is defined by Remark~\ref{rem:normal Hilbert poly}. 



The normalization morphism
$Y\to\operatorname{Bl}_{\mathcal I_x}Z$
is finite, and $\mathcal O_Y(H)$ is the pullback of the tautological
$\mathcal O(1)$ on $\operatorname{Bl}_{\mathcal I_x}Z$. Hence $H$ is $f$-ample. For every prime component $F$ of $E$, the restriction $H|_F$ is ample, and hence $F\cdot H^{d-1}=(H|_F)^{d-1}>0$. 


By \cite[Lemma~1.8]{BHJ17} and \cite[Proposition~1.1.4]{HS06}, for every $k\geq1$, $f_*\mathcal O_Y(kH)=\overline{\mathcal I_x^k}$ and $\left(\overline{\mathcal I_x^k}\right)_x=\overline{\m^k}$. Since $H$ is $f$-ample, relative Serre vanishing and the Leray
spectral sequence imply that for $k\gg0$,
$\chi(Y,\mathcal O_Y(kH))= \chi(Z,\overline{\mathcal I_x^k})$.



Since $\mathcal O_Z/\overline{\mathcal I_x^k}$ is supported at $x$, taking Euler characteristics in the exact sequence
\[
 0\longrightarrow\overline{\mathcal I_x^k}
 \longrightarrow\mathcal O_Z
 \longrightarrow\mathcal O_Z/\overline{\mathcal I_x^k}
 \longrightarrow0
\]
we obtain
\begin{equation}\label{eqn: chiY(kH) lR(R/m)}
\chi(Y,\mathcal O_Y(kH))
 =\chi(Z,\mathcal O_Z)-\ell_R(R/\overline{\m^k}).
\end{equation}


Let $\mu\colon\widetilde Y\to Y$ be a projective resolution which is an isomorphism over $Y_{\mathrm{reg}}$. Since $Y$ is normal, $\mu_*\mathcal O_{\widetilde Y}=\mathcal O_Y$ and $\dim\operatorname{Supp}R^i\mu_*\mathcal O_{\widetilde Y}\leq d-2$ for every $i>0$. The projection formula for higher direct images identifies
\[
 R^i\mu_*\mathcal O_{\widetilde Y}(k\mu^*H)
 \simeq
 R^i\mu_*\mathcal O_{\widetilde Y}\otimes\mathcal O_Y(kH).
\]
By \cite[Lemma~33.33.5]{Sta26}, 
$$\chi(\widetilde Y,\mathcal O_{\widetilde Y}(k\mu^*H))
 =\chi(Y,\mathcal O_Y(kH))+\sum_{i\geq1}(-1)^i
 \chi\bigl(Y,R^i\mu_*\mathcal O_{\widetilde Y}
 \otimes\mathcal O_Y(kH)\bigr).$$
By \cite[Lemma~33.45.1]{Sta26}, for any $i>0$, $\chi\bigl(Y,R^i\mu_*\mathcal O_{\widetilde Y}
 \otimes\mathcal O_Y(kH)\bigr)$ is a numerical polynomial in
$k$ of degree at most $d-2$. Thus, by Hirzebruch--Riemann--Roch \cite[Corollary~15.2.1]{Ful98} and the projection formula,
\begin{equation}\label{eq:rr-normalized-blowup}
\begin{split}
\chi(Y,\mathcal O_Y(kH))
={}&\chi(\widetilde Y,\mathcal O_{\widetilde Y}(k\mu^*H))+O(k^{d-2})\\
={}&\frac{(\mu^*H)^d}{d!}k^d
 -\frac{K_{\widetilde Y}\cdot(\mu^*H)^{d-1}}
 {2(d-1)!}k^{d-1}+O(k^{d-2})\\
={}&\frac{H^d}{d!}k^d
 -\frac{K_Y\cdot H^{d-1}}{2(d-1)!}k^{d-1}+O(k^{d-2}).
\end{split}
\end{equation}

Combining \eqref{eqn: chiY(kH) lR(R/m)} and \eqref{eq:rr-normalized-blowup}, we obtain
\begin{equation}\label{eq:normal-hilbert-from-rr}
\ell_R(R/\overline{\m^k})
=-\frac{H^d}{d!}k^d
+\frac{K_Y\cdot H^{d-1}}{2(d-1)!}k^{d-1}
+O(k^{d-2}).
\end{equation}
Comparing \eqref{eq:normal-hilbert-from-rr} with the normal Hilbert polynomial in Remark~\ref{rem:normal Hilbert poly}, we obtain $-H^d=\mult_xZ$ and
$K_Y\cdot H^{d-1}=(d-1)\mult_xZ-2\ebarone(\m)$. Since $E=\sum_F\ord_F(\m)F$ and $H=-E$,
$\sum_F\ord_F(\m)\bigl(F\cdot H^{d-1}\bigr)=E\cdot H^{d-1}
 =-H^d=\mult_xZ.$
\end{proof}

\subsection{Boundary and moduli contributions}

\begin{lemma}\label{lem:boundary-sign}
Let $Z$ be a normal variety of dimension $d\geq2$, let $x\in Z$ be a closed point, and let $\mathcal I_x\subset\mathcal O_Z$ be the ideal sheaf of $x$. Let $f\colon Y\to Z$ be the normalized blow-up of $\mathcal I_x$, and write $\mathcal I_x\mathcal O_Y=\mathcal O_Y(H)$. If $P$ is a prime divisor on $Z$ and $\widetilde P$ is its strict transform on $Y$, then
\[ 0\geq \widetilde P\cdot H^{d-1}
 =\begin{cases}
 -\mult_xP,&x\in P,\\
 0,&x\notin P.
 \end{cases}\]
\end{lemma}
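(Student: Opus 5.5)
The plan is to compare $f^*P$ with the strict transform $\widetilde P$. We take $Z$ projective, or work locally over $x$. Note that $P$ need not be $\mathbb Q$-Cartier, so I would avoid pulling it back directly.

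\emph{Case $x\notin P$.} Since $f$ is an isomorphism away from $x$, $\widetilde P$ is disjoint from $f^{-1}(x)=\operatorname{Supp}E$. We compute $\widetilde P\cdot H^{d-1}$ with $H=-E$ Cartier. Choosing a representative of $H^{d-1}$ supported on $E$, or simply restricting $\mathcal O_Y(H)$ to $\widetilde P$, where it is trivial, gives $\widetilde P\cdot H^{d-1}=0$.

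\emph{Case $x\in P$.} First I would show the sign. The intersection $\widetilde P\cdot H^{d-1}$ equals $(H|_{\widetilde P})^{d-1}$, computed on the $(d-1)$-dimensional variety $\widetilde P$. Moreover, $\mathcal O_{\widetilde P}(H)=\mathcal I_x\mathcal O_{\widetilde P}$, and the map $\widetilde P\to P$ factors through the blow-up of $P$ at $x$: the image of $\mathcal I_x$ in $\mathcal O_P$ is $\m_{P,x}$, and $\widetilde P$ dominates $\operatorname{Bl}_x P$ since $\m_{P,x}\mathcal O_{\widetilde P}$ is invertible. Let $g\colon \widetilde P\to P'$ be the normalization of the blow-up, composed appropriately, with $\m_{P,x}\mathcal O_{P'}=\mathcal O(-E')$. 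Then $H|_{\widetilde P}$ is the pullback of $-E'$ by a birational morphism, so by the projection formula
$$(H|_{\widetilde P})^{d-1}=(-E')^{d-1}=-\mult_xP.$$
The last equality is Proposition~\ref{prop:projective-rr}(2), or rather its proof, since $-H^d=e_0(\m)$ holds with $\ebarzero=e_0$ for the normalized blow-up of any reduced irreducible variety. Here it is applied to the $(d-1)$-dimensional $P$. When $d-1\ge 2$ this is exactly item (2). When $d-1=1$ it is the degree statement for a curve, $\deg E'=e_0(\m_{P,x})$, which follows from the same Euler characteristic comparison.

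\emph{Main obstacle.} The main obstacle is that $P$ need not be normal, while Proposition~\ref{prop:projective-rr} assumes normality. I would handle it by passing to the normalization $P^\nu$: $e_0$ of $\m_{P,x}$ equals $\sum_{y\mapsto x}e_0(\m_{P,x}\mathcal O_{P^\nu,y})$ by the associativity/finite-extension formula for multiplicities. Alternatively, and more simply, I would use the general fact that for any ideal $\mathfrak a$ of finite colength on a $(d-1)$-dimensional variety, $-(\text{exceptional divisor})^{d-1}$ on the normalized blow-up computes $e(\mathfrak a)$ via the leading term of $\chi(\mathcal O(kH))$, which requires no normality of the base. Finally, $\mult_xP>0$ gives $\widetilde P\cdot H^{d-1}=-\mult_xP\le0$.
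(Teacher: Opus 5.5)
Your argument is essentially the paper's: restrict the Cartier divisor $H$ to $\widetilde P$, use that $\m_{P,x}\mathcal O_{\widetilde P}=\mathcal O_{\widetilde P}(H|_{\widetilde P})$ is invertible on the birational model $\widetilde P\to P$, and identify $-(H|_{\widetilde P})^{d-1}$ with $\mult_xP$ (the case $x\notin P$ being trivial). The paper does the last step directly by Ramanujam's theorem, which is precisely your ``general fact'' and needs no normality of $P$ or $\widetilde P$. Your detour through Proposition~\ref{prop:projective-rr} would need two small repairs: first normalize $\widetilde P$ before mapping it to the normalized blow-up of $P$, since a non-normal $\widetilde P$ need not factor through it; and extend that proposition from maximal ideals to arbitrary finite-colength ideals, since $\m_{P,x}\mathcal O_{P^\nu}$ is in general neither radical nor supported at a single point.
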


\begin{proof}
If $x\notin P$, then $\widetilde P\cdot H^{d-1}=0$. If $x\in P$, then the induced morphism $f|_{\widetilde P}\colon\widetilde P\to P$ is projective birational, and $(\mathcal I_x\mathcal O_P)\mathcal O_{\widetilde P}=\mathcal O_{\widetilde P}(H|_{\widetilde P})$ is invertible. By \cite[Theorem and Remark~(1)]{Ram73}, $-\bigl(H|_{\widetilde P}\bigr)^{d-1}=\mult_xP$. Thus $\widetilde P\cdot H^{d-1}=\bigl(H|_{\widetilde P}\bigr)^{d-1}=-\mult_xP.$
\end{proof}

\begin{lemma}\label{lem:boundary-moduli}
Assume that $(X,\Delta)$ is log canonical near a closed point $x$ and is not klt at $x$, and let $W:=W_x(X,\Delta)$. Assume that $d:=\dim W\geq2$, and let $\nu\colon Z\to W$ be the normalization. Let $\mathcal I_x\subset\mathcal O_Z$ be the ideal sheaf of $x$, and let $f\colon Y\to Z$ be its normalized blow-up. Write $\mathcal I_x\mathcal O_Y=\mathcal O_Y(-E)$ and $H:=-E$. Let $\mathbf B$ be the discriminant $b$-divisor of the subadjunction $(K_X+\Delta)|_{W}$ (see Proposition~\ref{prop:adjunction-data}), and let $B_Y$ be the trace of $\mathbf B$ on $Y$. Then
\[
 -\sum_F\mult_F B_Y\bigl(F\cdot H^{d-1}\bigr)
 \leq K_Y\cdot H^{d-1},
\]
where the sum runs over the prime components of $E$.
\end{lemma}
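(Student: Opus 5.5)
The idea is to use nefness of the moduli part to show $(K_Y+B_Y)\cdot H^{d-1}\geq 0$ on the normalized blow-up $Y$, and then split $B_Y$ into the part supported on $E$ and its strict-transform part, whose contribution has a favourable sign by Lemma~\ref{lem:boundary-sign}.

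\emph{Step 1: reduce to $Z$ projective.} Shrink $X$ around $x$ and compactify. The cleanest way is to take a projective compactification of $(X,\Delta)$ and work locally near $x$, arguing that $B_Y$ near $f^{-1}(x)$ is unchanged. Alternatively, avoid global projectivity entirely. Since $H$ is $f$-ample and $H^{d-1}$ intersected with a divisor only sees curves in $f^{-1}(x)$, relative nefness of $M$ over $Z$ near $x$ should suffice. I expect the needed statement is that $M_{Z'}$ is nef over $Z$. If only the projective version of Proposition~\ref{prop:adjunction-data}(ii) is available, I would compactify $W$ to a normal projective $\overline Z$ and run the construction there.

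\emph{Step 2: the moduli inequality.} Take the model $p\colon Z'\to Z$ from Proposition~\ref{prop:adjunction-data}(ii), and a common resolution $g\colon Z''\to Y$, $h\colon Z''\to Z'$ over $Z$. By (ii),
\[
g^*(K_Y+B_Y+M_Y)=K_{Z''}+B_{Z''}+M_{Z''}+(\text{$g$-exceptional})
\]
interpreted via $b$-divisors, and $M_{Z''}=h^*M_{Z'}$ is nef. The $\mathbb Q$-linear equivalence $K_{Z''}+B_{Z''}+M_{Z''}\sim_{\mathbb Q}$ (pullback of $(K_X+\Delta)|_W$ from $Z$) is crucial. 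Intersecting it with $(g^*H)^{d-1}$ kills the pullback from $Z$, since $H^{d-1}$ against a divisor pulled back from $Z$ is zero. Indeed, $H$ is $f$-exceptionally supported and $(f^*D)\cdot H^{d-1}=D\cdot f_*(H^{d-1})=0$ for $d\geq2$. Hence
\[
(K_{Z''}+B_{Z''})\cdot(g^*H)^{d-1}=-M_{Z''}\cdot(g^*H)^{d-1}.
\]
By the projection formula, $(K_{Z''}+B_{Z''})\cdot(g^*H)^{d-1}=(K_Y+B_Y)\cdot H^{d-1}$, since $g_*(K_{Z''}+B_{Z''})=K_Y+B_Y$. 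Next, $(g^*H)^{d-1}=(-1)^{d-1}(g^*E)^{d-1}$. Writing $-M\cdot(g^*H)^{d-1}$ as $M\cdot(g^*E)\cdot(g^*H)^{d-2}$ up to sign, this is a nef class restricted to the components of $g^*E$ (effective) and intersected with $(g^*H)^{d-2}$. Here $g^*H|_{g^*E}$ is nef, since $H|_F$ is ample on each component by Proposition~\ref{prop:projective-rr}(1). This gives $-M_{Z''}\cdot(g^*H)^{d-1}=M_{Z''}\cdot g^*E\cdot (g^*H)^{d-2}\geq0$, and so $(K_Y+B_Y)\cdot H^{d-1}\geq0$. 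The sign bookkeeping here is the main obstacle. I would check it carefully via $-H^d=\mult_xZ>0$: the class $E\cdot H^{d-2}$ should be a positive combination of curves on $E$ when $H|_E$ is ample.

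\emph{Step 3: split the boundary.} Write $B_Y=\sum_F\mult_FB_Y\,F+\widetilde{B_Z}$, where $\widetilde{B_Z}$ is the strict transform of $B_Z$. Since $B_Z$ is effective by Proposition~\ref{prop:adjunction-data}(i), Lemma~\ref{lem:boundary-sign} gives $\widetilde{B_Z}\cdot H^{d-1}\leq0$. Therefore
\[
0\leq K_Y\cdot H^{d-1}+\sum_F\mult_FB_Y(F\cdot H^{d-1})+\widetilde{B_Z}\cdot H^{d-1}\leq K_Y\cdot H^{d-1}+\sum_F\mult_FB_Y(F\cdot H^{d-1}),
\]
which is the claimed inequality.
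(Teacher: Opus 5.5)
Your proposal is correct and follows the paper's proof essentially step for step. The steps are: a common smooth model of $Y$ and $Z'$; vanishing of $(K+B+M)\cdot(g^*H)^{d-1}$ by the projection formula; the estimate $-M\cdot(g^*H)^{d-1}=\sum_j r_j\,(M|_{F_j})\cdot(g^*H|_{F_j})^{d-2}\geq0$ over the components of $g^*E=\sum_j r_jF_j$ (your sign is right); and finally discarding the strict transform of $B_Z$ via Lemma~\ref{lem:boundary-sign}.

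For Step~1, the paper takes your first option: it shrinks $X$ and replaces $(X,\Delta)$ by its log canonical closure (Hacon--Xu). This keeps the compactification a pair, so the projective case of Proposition~\ref{prop:adjunction-data}(ii) applies. Compactifying $W$ alone would not give adjunction data.
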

\begin{proof}
Possibly shrinking $X$ around $x$ and replacing $(X,\Delta)$ by its log canonical closure \cite[Corollary~1.2]{HX13}, we may assume that $X$ is projective.

By Proposition~\ref{prop:adjunction-data}(i), $B_Z$ is effective. Choose the model $p\colon Z'\to Z$ in Proposition~\ref{prop:adjunction-data}(ii). Let $\mathbf M$ be the moduli $b$-divisor of the subadjunction $(K_X+\Delta)|_{W}$ (see Proposition~\ref{prop:adjunction-data}). Since $Z$ is projective, $Z'$ is projective and $M_{Z'}$ is nef. Take a smooth projective common model
\[
\begin{tikzcd}[column sep=small]
&V \arrow[dl,"g"'] \arrow[dr,"g'"]&\\
Y \arrow[dr,"f"']&&Z' \arrow[dl,"p"]\\
&Z&
\end{tikzcd}
\]
and let $\pi:=f\circ g=p\circ g'$, $H_V:=g^*H$, and $M_V:=(g')^*M_{Z'}$. Then $M_V$ is nef. Let $B_V$ be the trace of $\mathbf B$ on $V$. By Proposition~\ref{prop:adjunction-data},
\[
 K_V+B_V+M_V\sim_{\mathbb Q}\pi^*\nu^*\bigl((K_X+\Delta)|_W\bigr).
\]
Since $\mathrm{Supp}H_V\subseteq \pi^{-1}(x)$, the projection formula implies
\[
 (K_V+B_V+M_V)\cdot H_V^{d-1}
 =\nu^*\bigl((K_X+\Delta)|_W\bigr)\cdot\pi_*H_V^{d-1}=0.
\]
Write $g^*E=\sum_j r_jF_j$, where the $F_j\subseteq\pi^{-1}(x)$
are its prime components. For each $j$, let $F_j^\nu$ denote the
normalization of $F_j$. Since $H|_{f^{-1}(x)}$ is ample,
$H_V|_{F_j^\nu}$ is nef. Since $M_V|_{F_j^\nu}$ is also nef, one has
$\bigl(M_V|_{F_j^\nu}\bigr)
 \cdot\bigl(H_V|_{F_j^\nu}\bigr)^{d-2}\geq0$. When $d=2$, this
intersection number is the degree of $M_V|_{F_j^\nu}$. Since
$H_V=-\sum_j r_jF_j$,
\[
 M_V\cdot H_V^{d-1}
 =-\sum_j r_j
 \bigl(M_V|_{F_j^\nu}\bigr)
 \cdot\bigl(H_V|_{F_j^\nu}\bigr)^{d-2}
 \leq0.
\]
Hence
\[
 (K_V+B_V)\cdot H_V^{d-1}=-M_V\cdot H_V^{d-1}\geq0.
\]
Since $g_*B_V=B_Y$ and $H_V=g^*H$, the projection formula implies
\[
 (K_Y+B_Y)\cdot H^{d-1}
 =(K_V+B_V)\cdot H_V^{d-1}\geq0.
\]
The strict transform $f_*^{-1}B_Z$ is effective, so Lemma~\ref{lem:boundary-sign} implies $(f_*^{-1}B_Z)\cdot H^{d-1}\leq0$. Since $B_Y=f_*^{-1}B_Z+\sum_F\mult_F B_YF$,
\begin{align*}
 0\leq &(K_Y+B_Y)\cdot H^{d-1}\\
 \leq &K_Y\cdot H^{d-1}
 +\sum_F\mult_F B_Y\bigl(F\cdot H^{d-1}\bigr).
\end{align*}
\end{proof}


\begin{proof}[Proof of Theorem~\ref{thm:local}]
Let $\nu\colon Z\to W$ be the normalization. By Lemma~\ref{lem:minimal-lc-center}, $\nu$ is an isomorphism near $x$. Let $\m:=\m_{Z,x}\simeq\m_{W,x}$, $d:=\dim Z=\dim W$, and $b:=b_x(X,\Delta)$. Let $\mathbf B$ be the discriminant $b$-divisor of the subadjunction $(K_X+\Delta)|_{W}$ (see Proposition~\ref{prop:adjunction-data} ).

Suppose first that $d=1$. The germ $x\in Z$ is smooth, so $\mult_xZ=\mult_xW=1$ and $\ebarone(\m)=0$. Applying Proposition~\ref{prop:threshold} to the prime divisor $x\subset Z$, we obtain $b\leq1$. Hence $2\ebarone(\m)=0\leq(1-b)\mult_xW$.

Assume $d\geq2$. Let $\mathcal I_x\subset\mathcal O_Z$ be the ideal sheaf of $x$, let $f\colon Y\to Z$ be its normalized blow-up, and write $\mathcal I_x\mathcal O_Y=\mathcal O_Y(-E)$ and $H:=-E$. Let $B_Y$ be the trace of $\mathbf B$ on $Y$. By Proposition~\ref{prop:threshold},
\[
1-\mult_F B_Y\geq b\ord_F(\m)
\]
for every prime component $F$ of $E$. Multiplying these inequalities by
$F\cdot H^{d-1}>0$, summing over the prime components of $E$, and using
Proposition~\ref{prop:projective-rr} and
Lemma~\ref{lem:boundary-moduli}, we obtain
\begin{align*}
b\mult_xZ
\leq &\sum_F\bigl(1-\mult_F B_Y\bigr)
\bigl(F\cdot H^{d-1}\bigr)\\
\leq &\mult_xZ+K_Y\cdot H^{d-1}\\
= &d\mult_xZ-2\ebarone(\m).
\end{align*}
Here the second inequality follows from
$\ord_F(\m)\geq1$,
$\sum_F\ord_F(\m)(F\cdot H^{d-1})=\mult_xZ$,
and Lemma~\ref{lem:boundary-moduli}.
Since $\mult_xZ=\mult_xW$, this proves the theorem.
\end{proof}

\section{Hilbert functions and multiplicity bounds}

We begin with an elementary estimate.

\begin{proposition}\label{prop:elementary-sequence}
Let $c$ be a positive integer and let $a\geq0$. Let $h_0,h_1,\ldots$ be a finitely supported sequence of nonnegative real numbers such that
$h_k\leq\binom{c+k-1}{k}$, $k\geq0$, and $\sum_{k\geq1}kh_k
 \leq a\sum_{k\geq0}h_k$. Then
$$ \sum_{k\geq0}h_k
 \leq\frac{(a+c)^{a+c}}{a^ac^c}.$$
\end{proposition}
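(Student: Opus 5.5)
The plan is an exponential-tilting (generating function) argument. Set $S:=\sum_{k\geq0}h_k$. If $S=0$ there is nothing to prove. If $a=0$, the hypothesis $\sum_{k\geq1}kh_k\leq0$ forces $h_k=0$ for all $k\geq1$, so $S=h_0\leq\binom{c-1}{0}=1$, which is the stated convention. So assume $S>0$ and $a>0$.

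Fix a real number $t\in(0,1)$. Since the sequence is finitely supported and $h_k\leq\binom{c+k-1}{k}$, the negative binomial series gives the upper bound
\[
 \sum_{k\geq0}h_kt^k\leq\sum_{k\geq0}\binom{c+k-1}{k}t^k=(1-t)^{-c}.
\]
For a lower bound on the same sum, regard $p_k:=h_k/S$ as a probability distribution on $\mathbb Z_{\geq0}$. The function $k\mapsto t^k=e^{k\log t}$ is convex, so Jensen's inequality gives
\[
 \sum_k p_kt^k\geq t^{\sum_k kp_k}.
\]
The hypothesis says $\sum_k kp_k\leq a$, and $t^u$ is decreasing in $u$ because $t<1$. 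Hence $\sum_k h_kt^k\geq S\,t^{a}$. Combining the two bounds yields
\[
 S\leq t^{-a}(1-t)^{-c}\qquad\text{for every }t\in(0,1).
\]

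Finally, minimize the right-hand side over $t$. Its logarithmic derivative vanishes at $t=a/(a+c)$, which lies in $(0,1)$ since $a,c>0$. Substituting this value gives
\[
 S\leq\Bigl(\frac{a+c}{a}\Bigr)^{a}\Bigl(\frac{a+c}{c}\Bigr)^{c}=\frac{(a+c)^{a+c}}{a^ac^c}.
\]
The only step requiring an idea is the Jensen step, which converts the first-moment constraint into the lower bound $S\,t^a$. Everything else is the binomial series identity and a one-variable optimization, so I expect no real obstacle.
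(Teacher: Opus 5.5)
Your proof is correct and is essentially the paper's argument. The paper likewise bounds $\sum_k h_kz^k$ above by $(1-z)^{-c}$ and below by $z^a\sum_k h_k$, using weighted AM--GM, which is the same inequality as your Jensen step, and then takes $z=a/(a+c)$; it also handles $a=0$ exactly as you do.
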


\begin{proof}
If $a=0$, then $h_k=0$ for every $k\geq1$, while $h_0\leq1$, and the assertion follows. Assume that $a>0$ and that $\sum_{k\geq0}h_k>0$. For $0<z<1$, the weighted AM--GM inequality implies
\[
 \frac{\sum_{k\geq0}h_kz^k}{\sum_{k\geq0}h_k}
 \geq z^{\frac{\sum_{k\geq1}kh_k}{\sum_{k\geq0}h_k}}
 \geq z^a.
\]
On the other hand,
\[
 \sum_{k\geq0}h_kz^k
 \leq\sum_{k\geq0}\binom{c+k-1}{k}z^k
 =(1-z)^{-c}.
\]
Thus $\sum_{k\geq0}h_k\leq z^{-a}(1-z)^{-c}$. Taking $z=\frac{a}{a+c}$ proves the desired bound.
\end{proof}

We next pass to local algebra. Let $(R,\m)$ be a Noetherian local ring of dimension $d\geq1$.
The Hilbert--Samuel coefficients $e_0(\m),\ldots,e_d(\m)$ are
characterized by
\[
 \ell_R(R/\m^k)
 =\sum_{i=0}^{d}(-1)^i e_i(\m)
  \binom{k+d-1-i}{d-i}
\]
for all sufficiently large integers $k$.

Recall that an ideal $J\subseteq I$ is a \emph{reduction} of $I$ if $I^{k+1}=JI^k$ for $k\gg0$. It is a \emph{minimal reduction} if it is minimal under inclusion. If $R/\m$ is infinite, then a minimal reduction of $\m$ is generated by $d$ sufficiently general elements of $\m$.

\begin{lemma}\label{lem:artinian-reduction}
Let $(R,\m)$ be a $d$-dimensional Cohen--Macaulay local ring with $d\geq1$ and infinite residue field. Let $c:=\edim R-d$. If $J$ is a minimal reduction of $\m$, set $A:=R/J$, $\mathfrak n:=\m/J$, and $h_k:=\ell_A(\mathfrak n^k/\mathfrak n^{k+1})$. Then $e_0(\m)=\sum_{k\geq0}h_k$, $h_0=1$, $h_1=c$, and $e_1(\m)\geq\sum_{k\geq1}kh_k$. If $c>0$, then $h_k\leq\binom{c+k-1}{k}$ for every $k\geq0$.
\end{lemma}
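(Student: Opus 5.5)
The plan is to reduce to the Artinian ring $A=R/J$ and apply standard facts about Cohen--Macaulay rings and associated graded rings.

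\emph{Multiplicity and first two Hilbert values.} Since $R$ is Cohen--Macaulay and $J$ is generated by a system of parameters (a minimal reduction of $\m$ is generated by $d$ general elements), $J$ is generated by a regular sequence. Hence $e_0(\m)=e_0(J)=\ell_R(R/J)=\ell_A(A)=\sum_{k\geq0}h_k$, where the last equality holds because $\mathfrak n$ is nilpotent and $\ell_A(A)$ is the sum of the lengths of the graded pieces $\mathfrak n^k/\mathfrak n^{k+1}$. Clearly $h_0=\ell(A/\mathfrak n)=1$.

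For $h_1$, note that $h_1=\edim A=\dim_k\bigl(\m/(\m^2+J)\bigr)=\edim R-\dim_k\bigl((J+\m^2)/\m^2\bigr)$. The general generators of $J$ are part of a minimal generating set of $\m$, so their images in $\m/\m^2$ are linearly independent. This can be arranged because their images are general elements of $\m/\m^2$, and superficial/general elements of a reduction avoid $\m^2$ when $\edim R\geq d$. So $\dim_k\bigl((J+\m^2)/\m^2\bigr)=d$ and $h_1=c$. If $c>0$, then $A$ is a quotient of a regular local ring $S$ of dimension $c$ (or $\mathfrak n$ is generated by $c$ elements). Thus $\mathfrak n^k/\mathfrak n^{k+1}$ is spanned by the degree-$k$ monomials in $c$ generators, giving $h_k\leq\binom{c+k-1}{k}$.

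\emph{The inequality for $e_1$.} This is the main step. I will use the Huneke--Ooishi type formula: for a Cohen--Macaulay ring and a minimal reduction $J$,
\[
e_1(\m)=\sum_{k\geq1}\ell_R\bigl(\m^k/J\m^{k-1}\bigr),
\]
which can be obtained from $e_1=\sum_{k\geq0}\bigl(e_0-\ell(R/\m^{k+1})+\ell(\ldots)\bigr)$ expansions, or equivalently the inequality $e_1(\m)\geq\sum_{k\geq1}\ell(\m^k/(J\cap\m^k))$ (Huckaba--Marley). I would then compare:
\[
\ell_R\bigl(\m^k/(J\cap\m^k)\bigr)=\ell_A(\mathfrak n^k)=\sum_{j\geq k}h_j,
\]
and summing over $k\geq1$ gives $\sum_{k\geq1}\sum_{j\geq k}h_j=\sum_{j\geq1}jh_j$. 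So the key input is the Huckaba--Marley inequality $e_1(\m)\geq\sum_{k\geq1}\ell_R\bigl(\m^k/(J\cap\m^k)\bigr)$, valid for Cohen--Macaulay rings. I would cite it rather than reprove it. If a self-contained argument is wanted, I would reduce by induction on $d$ modulo a superficial element of $J$, using Sally's machine and the fact that $e_1$ is preserved while the right-hand side can only decrease, and handle the case $d=1$ directly. In that case
\[
e_1=\sum_{k\geq0}\bigl(e_0-\ell(R/\m^{k+1})\bigr)=\sum_{k}\ell\bigl(\m^{k+1}/x\m^{k}\bigr)\geq\sum_k\ell\bigl(\m^{k+1}/(x R\cap\m^{k+1})\bigr).
\]
This $d=1$ case is the main obstacle: one must track how $J\cap\m^k$ behaves under passing to $R/(x)$, which is where the Valabrega--Valla-type care is needed.
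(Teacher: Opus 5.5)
Your overall route is the paper's: $e_0(\m)=\ell_R(R/J)$ because $J$ is generated by a regular sequence, and $h_1=c$ because the generators of $J$ stay linearly independent in $\m/\m^2$. The bound $h_k\leq\binom{c+k-1}{k}$ holds because $\mathfrak n$ is generated by $c$ elements. The $e_1$ bound comes from the Huckaba--Marley inequality [HM97, Theorem~4.7(a)] together with $\ell_R(\m^k/(J\cap\m^k))=\ell_A(\mathfrak n^k)=\sum_{j\geq k}h_j$.

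Two steps need repair.

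First, the lemma concerns an \emph{arbitrary} minimal reduction $J$. Your reasons for ``$J$ is generated by a system of parameters'' and ``$h_1=c$'' only cover a $J$ generated by general elements. Once $J$ is given, ``this can be arranged'' is not available. The fix, which the paper uses, has two parts. With infinite residue field, every minimal reduction of $\m$ is minimally generated by $d$ elements [HS06, Proposition~8.3.7]. Every minimal reduction also satisfies $J\cap\m^2=\m J$ [HS06, Proposition~8.3.3(1)]. Indeed, if a minimal generator $y$ of $J=J'+(y)$ lay in $\m^2$, then $\m^{n+1}=J\m^n\subseteq J'\m^n+\m^{n+2}$ for $n\gg0$. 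Nakayama would then make $J'$ a smaller reduction. This gives $0\to J/\m J\to\m/\m^2\to\mathfrak n/\mathfrak n^2\to0$, hence $h_1=c$.

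Second, the ``Huneke--Ooishi type formula'' $e_1(\m)=\sum_{k\geq1}\ell_R(\m^k/J\m^{k-1})$ is false for Cohen--Macaulay rings in general. Huckaba--Marley show only $e_1(\m)\leq\sum_{k\geq1}\ell_R(\m^k/J\m^{k-1})$, with equality iff $\operatorname{depth}\operatorname{gr}_{\m}(R)\geq d-1$. That upper bound is neither equivalent to nor useful for the lower bound you need. Your fallback of citing $e_1(\m)\geq\sum_{k\geq1}\ell_R(\m^k/(J\cap\m^k))$ directly is correct, and it is exactly the paper's citation.

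Your self-contained sketch misplaces the difficulty. Once corrected, it is a genuinely elementary alternative to the citation. Take generators $x_1,\dots,x_d$ of $J$ forming a superficial sequence for $\m$; this is possible since the residue field is infinite, and they are regular since $R$ is Cohen--Macaulay. For $d\geq2$, passing to $R/(x_1)$ preserves $e_0,\dots,e_{d-1}$, in particular $e_1$. It also leaves $A=R/J$ unchanged, so the right-hand side $\sum_{k\geq1}\ell_A(\mathfrak n^k)$ is literally the same. No Sally machine or Valabrega--Valla condition is involved.

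After reducing to dimension one with $J=(x)$, one has $e_1(\m)=\sum_{k\geq0}\bigl(e_0(\m)-\ell_R(\m^k/\m^{k+1})\bigr)=\sum_{k\geq0}\ell_R(\m^{k+1}/x\m^k)$. Your display has $\ell_R(R/\m^{k+1})$ in place of $\ell_R(\m^k/\m^{k+1})$, which makes the sum diverge. Then $x\m^k\subseteq xR\cap\m^{k+1}$ gives the inequality. So the one-dimensional case is the easy end, not the obstacle, and this recovers [HM97, Theorem~4.7(a)] in a few lines.
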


\begin{proof}
By \cite[Theorem~8.3.6, Corollary~8.4.3,
and Proposition~8.3.7]{HS06}, minimal reductions exist and $J$ is generated by a system of parameters. Since $R$ is Cohen--Macaulay, this system of parameters is an
$R$-regular sequence, and \cite[Proposition~11.2.2]{HS06} implies $\ell_R(R/J)=e_0(\m)$. The ring $A$ is Artinian, so $\ell_A(A)=\sum_{k\geq0}h_k$. Moreover, \cite[Proposition~8.3.3(1)]{HS06} implies $J\cap\m^2=\m J$, and hence there is an exact sequence
$$
0\longrightarrow J/\m J\longrightarrow \m/\m^2\longrightarrow \mathfrak n/\mathfrak n^2\longrightarrow0.
$$
Since $J$ is generated by a system of parameters, it follows that
$h_1=\dim_{R/\m}(\mathfrak n/\mathfrak n^2)=\edim R-d=c$.
When $c>0$, taking the degree-$k$ part of the natural graded surjection
$\operatorname{Sym}_{R/\m}(\mathfrak n/\mathfrak n^2)\twoheadrightarrow
\operatorname{gr}_{\mathfrak n}(A)$ for every $k\ge 0$, we obtain
$$h_k\leq\dim_{R/\m}\operatorname{Sym}_{R/\m}^k
(\mathfrak n/\mathfrak n^2)=\binom{c+k-1}{k}.$$
Finally, by \cite[Theorem~4.7(a)]{HM97},
$$e_1(\mathfrak m) \ge \sum_{j\ge1} \ell_R\!\left(\frac{\mathfrak m^j}{J\cap\mathfrak m^j}\right)= \sum_{j\ge1}\ell_A(\mathfrak n^j) = \sum_{k\ge1}kh_k. $$
\end{proof}

\begin{corollary}\label{cor:local-algebra}
Let $(R,\m)$ be a $d$-dimensional Cohen--Macaulay local ring with infinite residue field, let $c:=\edim R-d$, and suppose that $e_1(\m)\leq a e_0(\m)$ for some $a\geq0$. If $a=0$ or $c=0$, then $e_0(\m)=1$. If $a,c>0$, then
\[
 e_0(\m)\leq\frac{(a+c)^{a+c}}{a^ac^c}.
\]
\end{corollary}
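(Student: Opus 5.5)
Fix a minimal reduction $J$ of $\m$; it exists because the residue field is infinite. Set $A:=R/J$ and $h_k:=\ell_A(\mathfrak n^k/\mathfrak n^{k+1})$ as in Lemma~\ref{lem:artinian-reduction}. That lemma gives
\[
e_0(\m)=\sum_{k\geq0}h_k,\qquad h_0=1,\qquad h_1=c,\qquad e_1(\m)\geq\sum_{k\geq1}kh_k .
\]
Combined with the hypothesis $e_1(\m)\leq a\,e_0(\m)$, this yields
\[
\sum_{k\geq1}kh_k\leq a\sum_{k\geq0}h_k .
\]
Since $A$ is Artinian, the sequence $(h_k)$ is finitely supported and consists of nonnegative integers.

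The degenerate cases come first. If $c=0$, then $h_1=0$. Since $\operatorname{gr}_{\mathfrak n}(A)$ is generated in degree one, $h_k=0$ for all $k\geq1$, so $e_0(\m)=h_0=1$. (Equivalently, $R$ is regular and has multiplicity $1$.) If $a=0$, the inequality $\sum_{k\geq1}kh_k\leq0$ forces $h_k=0$ for all $k\geq1$, and again $e_0(\m)=1$.

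Now assume $a,c>0$. Lemma~\ref{lem:artinian-reduction} gives $h_k\leq\binom{c+k-1}{k}$ for every $k\geq0$. So the sequence $(h_k)$ satisfies all the hypotheses of Proposition~\ref{prop:elementary-sequence}, which gives
\[
e_0(\m)=\sum_{k\geq0}h_k\leq\frac{(a+c)^{a+c}}{a^ac^c}.
\]
There is no real obstacle: the substantive inputs, namely the Huneke--Ooishi type inequality $e_1\geq\sum_{k\geq1}kh_k$ and the AM--GM optimization, are already contained in Lemma~\ref{lem:artinian-reduction} and Proposition~\ref{prop:elementary-sequence}. The only point to check is that the cases $a=0$ and $c=0$ are handled separately, because Lemma~\ref{lem:artinian-reduction} states the binomial bound on $h_k$ only when $c>0$.
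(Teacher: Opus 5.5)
Your proposal is correct and follows essentially the same route as the paper. You fix a minimal reduction, read off $h_0=1$, $h_1=c$, $e_0=\sum h_k$ and $e_1\geq\sum_{k\geq1}kh_k$ from Lemma~\ref{lem:artinian-reduction}, dispose of $a=0$ and $c=0$ directly (your graded-generation argument for $c=0$ is just the Nakayama step in the paper), and then apply Proposition~\ref{prop:elementary-sequence} when $a,c>0$.
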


\begin{proof}
Choose a minimal reduction $J$ of $\m$, and use $A$, $\mathfrak n$, and $h_k$ as in Lemma~\ref{lem:artinian-reduction}. If $a=0$, then $\sum_{k\geq1}kh_k=0$, and hence $e_0(\m)=h_0=1$. If $c=0$, then $\mathfrak n/\mathfrak n^2=0$, so Nakayama's lemma implies $\mathfrak n=0$ and again $e_0(\m)=1$. If $a,c>0$, Proposition~\ref{prop:elementary-sequence} applies to the sequence $(h_k)_{k\geq0}$.
\end{proof}

\begin{lemma}\label{lem:ordinary-normal-e1}
Let $(R,\m)$ be a $d$-dimensional analytically unramified local ring with $d\geq1$. Then $e_1(\m)\leq\ebarone(\m)$.
\end{lemma}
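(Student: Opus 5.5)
The plan is to compare the Hilbert--Samuel function with the normal Hilbert function term by term, since $\m^k\subseteq\overline{\m^k}$ for all $k$. This gives
\[
 \ell_R(R/\overline{\m^k})\leq\ell_R(R/\m^k)
\]
for every $k\geq0$. For $k\gg0$ both sides are polynomials of degree $d$ in $k$. Their leading coefficients agree, because $\ebarzero(\m)=e_0(\m)$ by Remark~\ref{rem:normal Hilbert poly}.

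Subtracting the two expansions, the degree-$d$ terms cancel, and the difference is
\[
 \ell_R(R/\m^k)-\ell_R(R/\overline{\m^k})
 =\bigl(\ebarone(\m)-e_1(\m)\bigr)\binom{k+d-2}{d-1}+O(k^{d-2}).
\]
Here the binomial coefficient is a polynomial of degree $d-1$ with positive leading coefficient $1/(d-1)!$. The left side is nonnegative for all large $k$, so the leading coefficient of the difference must be nonnegative. That leading coefficient is $\ebarone(\m)-e_1(\m)$, which gives $e_1(\m)\leq\ebarone(\m)$.

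The case $d=1$ needs a separate check, since the formula above degenerates. There the difference is exactly the constant $\ebarone(\m)-e_1(\m)$ for $k\gg0$. Nonnegativity of the difference again gives the claim directly.

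The only point needing care is that both functions genuinely agree with polynomials for large $k$. For the normal filtration this holds by the finiteness discussion preceding the definition of $\overline{e}_i$, and it is exactly where analytic unramifiedness is used. For the $\m$-adic filtration it is the classical Hilbert--Samuel theorem.

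I would also verify that the binomial basis expansions line up so that the coefficient of $k^{d-1}$ is $\ebarone(\m)-e_1(\m)$ up to the positive factor $1/(d-1)!$. The $e_0$ contributions coincide, so this is immediate. There is no real obstacle.
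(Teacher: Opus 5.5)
Your proposal is correct and follows essentially the same route as the paper: it uses $\m^k\subseteq\overline{\m^k}$ to get $\ell_R(R/\overline{\m^k})\leq\ell_R(R/\m^k)$, notes that the leading coefficients agree since $\ebarzero(\m)=e_0(\m)$, and compares the coefficients of degree $d-1$. Your separate treatment of $d=1$ and your sign bookkeeping in the binomial basis are both fine.
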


\begin{proof}
Since $\m^k\subseteq\overline{\m^k}$, one has $\ell_R(R/\m^k)\geq\ell_R(R/\overline{\m^k})$. The two Hilbert polynomials have the same leading coefficient, and comparison of their coefficients of degree $d-1$ proves the assertion.
\end{proof}

We now combine Theorem~\ref{thm:local}, Corollary~\ref{cor:local-algebra}, and Lemma~\ref{lem:ordinary-normal-e1}.

\begin{proof}[Proof of Theorem~\ref{thm:multiplicity-bound}]
Let $R:=\mathcal O_{W,x}$ and $\m:=\m_{W,x}$, where $d:=\dim W$, $b:=b_x(X,\Delta)$, $a:=(d-b)/2$, and $c:=\edim R-d$. By Theorem~\ref{thm:local}, $\ebarone(\m)\leq a e_0(\m)$. 
The ring $R$ is Cohen--Macaulay by Lemma~\ref{lem:minimal-lc-center}, has residue field $\mathbb C$, and is analytically unramified by Remark~\ref{rem:normal Hilbert poly}.
Lemmas~\ref{lem:artinian-reduction} and~\ref{lem:ordinary-normal-e1} imply
\[
 0\leq e_1(\m)\leq\ebarone(\m)\leq a e_0(\m).
\]
Since $e_0(\m)>0$, $0\leq\ebarone(\m)\leq a e_0(\m)$ implies $a\geq0$, equivalently $b\leq d$. Lemma~\ref{lem:positive-b} implies $b>0$. By Corollary~\ref{cor:local-algebra}, we obtain
\[
 \mult_xW=e_0(\m)
 \leq\frac{(a+c)^{a+c}}{a^ac^c},
\]
which proves the theorem.
\end{proof}

The following corollary gives a multiplicity bound independent of the embedding dimension when
$b_x(X,\Delta)>\dim W-2$.

\begin{corollary}\label{cor:multiplicity-without-edim}
Under the assumptions and notation of
Theorem~\ref{thm:multiplicity-bound},
$$b\leq d-2+\frac{2}{\mult_xW}.$$
In particular, if $b>d-2$, then
$\mult_xW\leq\lfloor \frac{2}{b-d+2}\rfloor$.
\end{corollary}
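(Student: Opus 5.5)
We want $b\le d-2+2/e_0$ with $e_0:=\mult_xW$. Equivalently, with $a=(d-b)/2$, we want $a\ge 1-1/e_0$, i.e. $a e_0\ge e_0-1$. By Theorem~\ref{thm:local} and Lemma~\ref{lem:ordinary-normal-e1}, we already have $e_1(\m)\le\ebarone(\m)\le a e_0(\m)$, where $\m:=\m_{W,x}$. So it suffices to prove the lower bound
$$
e_1(\m)\ge e_0(\m)-1 .
$$
This is Northcott's inequality, which holds for Cohen--Macaulay local rings. Our ring $R=\mathcal O_{W,x}$ is Cohen--Macaulay by Lemma~\ref{lem:minimal-lc-center}.

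We can derive Northcott's inequality directly from Lemma~\ref{lem:artinian-reduction}, so no outside citation is needed. Choose a minimal reduction $J$ and set $h_k:=\ell_A(\mathfrak n^k/\mathfrak n^{k+1})$. Then $h_0=1$ and $e_0=\sum_{k\ge0}h_k$, so
$$
e_1(\m)\ge\sum_{k\ge1}kh_k\ge\sum_{k\ge1}h_k=e_0-1 .
$$
Combining the two inequalities gives $e_0-1\le a e_0=\frac{d-b}{2}e_0$. Rearranging yields $b\le d-2+2/e_0$.

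The case $d=1$ should be consistent with this. There $W$ is smooth at $x$, so $e_0=1$, and the proof of Theorem~\ref{thm:local} gives $b\le1=d-2+2$. The general argument already covers it, since Theorem~\ref{thm:local} holds for all $d\ge1$.

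For the second assertion, suppose $b>d-2$. Then $b-d+2>0$, and the inequality rearranges to $\mult_xW\le 2/(b-d+2)$. Since $\mult_xW$ is an integer, we may take the floor.

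The only step with real content is the Northcott-type bound $e_1\ge e_0-1$. It follows immediately from the reduction to the Artinian ring $A$ in Lemma~\ref{lem:artinian-reduction}, so I expect no serious obstacle beyond checking that the hypotheses of that lemma hold: the residue field is $\mathbb C$, which is infinite, and $R$ is Cohen--Macaulay.
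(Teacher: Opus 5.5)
Your proposal is correct and follows the paper's proof: the paper likewise obtains Northcott's inequality $e_1(\m)\geq e_0(\m)-1$ from Lemma~\ref{lem:artinian-reduction}, via $h_0=1$ and $\sum_{k\geq1}kh_k\geq\sum_{k\geq1}h_k$. It then chains this with Lemma~\ref{lem:ordinary-normal-e1} and Theorem~\ref{thm:local} to get $2(e_0(\m)-1)\leq 2e_1(\m)\leq 2\ebarone(\m)\leq(d-b)e_0(\m)$, and rearranges exactly as you do.
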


\begin{proof}
Let $R:=\mathcal O_{W,x}$ and $\m:=\m_{W,x}$.
Lemma~\ref{lem:artinian-reduction} implies
$e_1(\m)\geq e_0(\m)-1$.
Together with Lemma~\ref{lem:ordinary-normal-e1}
and Theorem~\ref{thm:local}, we get
$$2(e_0(\m)-1)
\leq2e_1(\m)
\leq2\ebarone(\m)
\leq(d-b)e_0(\m).$$
\end{proof}


\section{Global generation and separation of points}

Recall that $\phi=(1+\sqrt5)/2$ and
$$
 C_0=\int_0^{\phi^{-2}}
 \frac{2\bigl(-z\log z-(1-z)\log(1-z)\bigr)}
 {z^{3/2}\log^2 z}\,dz.
$$
We first prove the numerical estimate needed for Proposition~\ref{prop:cutting}.

\begin{proposition}\label{prop:numerical-sum}
For every positive integer $n$, there exists $\delta_n>0$ with the following property. Let $r\geq1$, let $k$ be a positive integer with $k\leq n$, and let $n=b_0\geq b_1\geq\cdots\geq b_k=0$ be real numbers. For every $0\leq i<k$, let $d_i\geq1$, set $a_i:=(d_i-b_i)/2\geq0$, and let $0\leq c_i\leq n-d_i$. Then
\begin{equation}\label{eq:numerical-sum}
 \sum_{i=0}^{k-1}(b_i-b_{i+1})r^{1/d_i}
 \left(
 \frac{(a_i+c_i)^{a_i+c_i}}{a_i^{a_i}c_i^{c_i}}
 \right)^{1/d_i}
 \leq (C_0-\delta_n)n+3(r-1)\sqrt{2\mathrm{e}n}.
\end{equation}
\end{proposition}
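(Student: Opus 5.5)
The plan is to reduce \eqref{eq:numerical-sum} to a one-variable Riemann-sum comparison. Write
\[
M(a,c):=\frac{(a+c)^{a+c}}{a^ac^c}=\exp\bigl((a+c)H(\tfrac{a}{a+c})\bigr),\qquad H(z):=-z\log z-(1-z)\log(1-z).
\]
\emph{Step 1 (reduction to one parameter).} For fixed $a$, the function $c\mapsto M(a,c)$ is nondecreasing, since $\partial_c\log M=\log\frac{a+c}{c}\ge0$. Hence we may replace $c_i$ by $n-d_i$. Define
\[
G_n(b):=\sup_{\max(b,1)\le d\le n} M\!\left(\tfrac{d-b}{2},\,n-d\right)^{1/d},\qquad 0\le b\le n .
\]
Each term on the left of \eqref{eq:numerical-sum} with $r=1$ is then at most $(b_i-b_{i+1})G_n(b_i)$. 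Increasing $b$ decreases $a$ and shrinks the range of $d$, so $G_n$ is nonincreasing. Since $b_i$ is the right endpoint of $[b_{i+1},b_i]$, we get
\[
\sum_i (b_i-b_{i+1})\,G_n(b_i)\le\int_0^n G_n(b)\,db .
\]

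\emph{Step 2 (scaling and evaluation of the integral).} Put $b=\beta n$, $d=\delta n$, $a=\alpha n$, $c=\gamma n$. Then
\[
M^{1/d}=\exp\Bigl(\tfrac{\alpha+\gamma}{\delta}\,H\bigl(\tfrac{\alpha}{\alpha+\gamma}\bigr)\Bigr),
\]
which is independent of $n$; the constraint $d\ge1$ only shrinks the supremum. So $\int_0^n G_n\le n\int_0^1 G(\beta)\,d\beta$, where $G$ is the scale-free supremum over $\delta\in[\beta,1]$.
\begin{itemize}
\item I will compute this supremum by Lagrange optimization in $\delta$, parametrized by $z=\alpha/(\alpha+\gamma)$. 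The stationarity condition should express $\beta$ and the optimal $\delta$ explicitly in terms of $z$ and $\log z$.
\item The endpoint of the range where the optimum is interior should be $z=\phi^{-2}$; this comes from a quadratic $z^2-3z+1=0$-type relation.
\item Substituting $d\beta=\beta'(z)\,dz$ should turn $\int_0^1 G$ into exactly the integral defining $C_0$.
\end{itemize}
This explicit optimization, and checking that the boundary cases ($\delta=\beta$, $\delta=1$, and $\beta$ past the golden-ratio threshold, where $G\equiv1$ or is handled directly) are dominated, is the main computational obstacle. I would first verify it numerically against $C_0=1.776\ldots$ to pin down the correct parametrization.

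\emph{Step 3 (strictness, giving $\delta_n$).} For fixed $n$, the configurations have $k\le n$, and the data $(b_i,d_i,c_i)$ range over a compact set. The left-hand side is upper semicontinuous there, after handling $a=0$ or $c=0$ by the convention $M=1$. Moreover, equality with $\int_0^n G_n$ would force $G_n$ to be constant on each of at most $n$ intervals. This is impossible because $G_n$ is strictly decreasing near $b=0$; indeed $G_n(0)>1=G_n(n)$. Compactness then yields a uniform gap $\delta_n n$.

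\emph{Step 4 (the $r$-dependence).} By concavity of $r\mapsto r^{1/d}$, for $d\ge1$ we have $r^{1/d}\le1+(r-1)/d$. It therefore suffices to show
\[
\sum_i(b_i-b_{i+1})\frac{M_i^{1/d_i}}{d_i}\le 3\sqrt{2\mathrm{e}n}.
\]
Since $d_i\ge\max(b_i,1)$, this reduces, as in Step 1, to bounding $\int_0^n \widetilde G_n(b)\,db$, where $\widetilde G_n(b)$ is the supremum of $M^{1/d}/d$ over $d\ge\max(b,1)$.
\begin{itemize}
\item I will use the crude bounds $M(a,c)\le\bigl(\mathrm{e}(a+c)/a\bigr)^a$ and $a\le d/2$.
\item These should give $M^{1/d}/d\lesssim\sqrt{2\mathrm{e}n}/d$ when $d\le\sqrt n$, and a bounded quantity otherwise.
\item Integrating then produces $O(\sqrt{n})$, and the constant $3\sqrt{2\mathrm{e}}$ comes from optimizing the split point $d\approx\sqrt{2n/\mathrm{e}}$.
\end{itemize}
This constant-chasing is the second delicate point.
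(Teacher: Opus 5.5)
\textbf{Gap in Step 4.} Your Step 4 does not go through as sketched.

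The reduction via $r^{1/d}\le 1+(r-1)/d$ to $\sum_i(b_i-b_{i+1})M_i^{1/d_i}/d_i\le 3\sqrt{2\mathrm{e}n}$ is correct. The two estimates you then announce cannot give $O(\sqrt n)$:
\begin{itemize}
\item a bound of order $\sqrt{2\mathrm{e}n}/d$ for $d\le\sqrt n$ integrates over $b\in[1,\sqrt n]$ to about $\tfrac12\sqrt{2\mathrm{e}n}\log n$;
\item an integrand that is merely bounded for $d\ge\sqrt n$ contributes order $n$ over $b\in[\sqrt n,n]$.
\end{itemize}
No split point repairs this, and the constant $3$ does not come from optimizing one.

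What you need is a decay in $d$ valid for all $d$, and your crude bounds already supply it. Since $(\mathrm{e}(a+c)/a)^a$ is increasing in both $a$ and $c$, the constraints $a\le d/2$ and $c\le n-d$ give $M^{1/d}\le\bigl(\mathrm{e}(2n-d)/d\bigr)^{1/2}\le\sqrt{2\mathrm{e}n/d}$. Hence $M_i^{1/d_i}/d_i\le\sqrt{2\mathrm{e}n}\,d_i^{-3/2}$. Since $d_i\ge\max(1,t)$ for $t\in[b_{i+1},b_i]$, the sum is at most $\sqrt{2\mathrm{e}n}\int_0^n\max(1,t)^{-3/2}\,dt=\sqrt{2\mathrm{e}n}\,(3-2/\sqrt n)$. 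The constant $3$ is $1+\int_1^\infty t^{-3/2}\,dt$. This is the paper's argument.

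\textbf{Steps 1--3 compared with the paper.} For $r=1$ your skeleton matches the paper: a nonincreasing majorant in $b$, a right-endpoint Riemann sum, and compactness for $\delta_n$. The identity you are aiming for is also true: the scale-free supremum is $G(\beta)=(1-z)/\sqrt z$, where $\psi(z)=2\log(1-z)/\log z=\beta$.

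However, this identity is the heart of the proof, and you leave it as an unperformed Lagrange computation. Your heuristics about it are also off. The maximizing $\delta=\frac{\beta(1-z)+2z}{1+z}$ lies strictly inside $[\beta,1]$ for every $\beta\in(0,1)$. The value $z=\phi^{-2}$ is just the one at $\beta=1$, where $1-z=\sqrt z$, so there is no regime past a threshold.

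The paper needs no optimization. In $M(a,c)\le u^{-a}(1-u)^{-c}$ it takes $u=z_i$ with $1-z_i=z_i^{b_i/(2n)}$. Then $c_i\le n-d_i$ gives the exponent $a_i+\frac{b_ic_i}{2n}\le\frac{d_i}{2}\bigl(1-\frac{b_i}{n}\bigr)$, so $M_i^{1/d_i}\le(1-z_i)/\sqrt{z_i}$ independently of $d_i$. The sum is then a Riemann sum for $\int_0^{\phi^{-2}}\frac{1-z}{\sqrt z}\psi'(z)\,dz=C_0$.

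A minor point: for $n=1$ one has $G_1\equiv1$, so your Step 3 strictness argument fails there; strictness instead comes from $1<C_0$.
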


\begin{proof}
We first prove \eqref{eq:numerical-sum} when $r=1$.
For $0<z\leq\phi^{-2}$, set
$\psi(z):=\frac{2\log(1-z)}{\log z}$. Then
$\lim_{z\to0^+}\psi(z)=0$, $\psi(\phi^{-2})=1$, and
$$
 \frac{1-z}{\sqrt z}\psi'(z)
 =\frac{2\bigl(-z\log z-(1-z)\log(1-z)\bigr)}
 {z^{3/2}\log^2 z}>0.
$$
Thus $\psi$ extends to a strictly increasing bijection
$[0,\phi^{-2}]\to[0,1]$ by setting $\psi(0):=0$. By the definition of
$C_0$,
$$
 C_0=\int_0^{\phi^{-2}}\frac{1-z}{\sqrt z}\psi'(z)\,dz.
$$
Since $\frac{1-z}{\sqrt z}\psi'(z)=O(z^{-1/2})$ as $z\to0^+$, this integral converges.

For $0\leq v\leq u\leq\phi^{-2}$ with $u>0$, the inequality
$-\log(1-u)\leq\frac{u}{1-u}$ implies
$$
 0\leq\bigl(\psi(u)-\psi(v)\bigr)\frac{1-u}{\sqrt u}
 \leq\psi(u)\frac{1-u}{\sqrt u}
 \leq\frac{2\sqrt u}{-\log u}\longrightarrow0
 \quad(u\to0^+).
$$
Thus $(u,v)\mapsto\bigl(\psi(u)-\psi(v)\bigr)\frac{1-u}{\sqrt u}$ extends continuously to $(0,0)$ with value $0$.

Let $\phi^{-2}=z_0\geq z_1\geq\cdots\geq z_n=0$. Since
$z\mapsto\frac{1-z}{\sqrt z}$ is strictly decreasing,
\begin{equation}\label{eq:numerical-integral-comparison}
\begin{split}
 \sum_{j=0}^{n-1}
 \bigl(\psi(z_j)-\psi(z_{j+1})\bigr)
 \frac{1-z_j}{\sqrt{z_j}}
 ={}&\sum_{j=0}^{n-1}\int_{z_{j+1}}^{z_j}
 \frac{1-z_j}{\sqrt{z_j}}\psi'(z)\,dz\\
 <{}&\int_0^{\phi^{-2}}
 \frac{1-z}{\sqrt z}\psi'(z)\,dz=C_0.
\end{split}
\end{equation}
Here a summand is defined to be $0$ when $z_j=0$. The inequality is strict because $z_0>z_n$, the function
$z\mapsto\frac{1-z}{\sqrt z}$ is strictly decreasing, and $\psi'(z)>0$ for
$0<z\leq\phi^{-2}$.

By the continuous extension above, the left-hand side of \eqref{eq:numerical-integral-comparison} is continuous on the compact simplex $\phi^{-2}=z_0\geq\cdots\geq z_n=0$. Hence there exists $\delta_n>0$, depending only on $n$, such that the left-hand side of \eqref{eq:numerical-integral-comparison} is at most $C_0-\delta_n$.

If $b_j=0$ for some $j<k$, then $b_i=0$ for every $i\geq j$, and all summands with $i\geq j$ vanish. Replacing $k$ by the first such $j$, we may assume that $b_i>0$ for $i<k$. For $0\leq i<k$, let $z_i\in(0,\phi^{-2}]$ be determined by
$\psi(z_i)=\frac{b_i}{n}$, and set $z_k:=0$. Then
$\phi^{-2}=z_0\geq z_1\geq\cdots\geq z_k=0$ and
$1-z_i=z_i^{\frac{b_i}{2n}}$ for $i<k$. As in the proof of Proposition~\ref{prop:elementary-sequence}, for every $0<u<1$,
$$
 \frac{(a_i+c_i)^{a_i+c_i}}{a_i^{a_i}c_i^{c_i}}
 \leq u^{-a_i}(1-u)^{-c_i}.
$$
Taking $u=z_i$, and using $c_i\leq n-d_i$ and
$a_i+\frac{b_i(n-d_i)}{2n}=\frac{d_i}{2}\left(1-\frac{b_i}{n}\right)$, we obtain
\begin{equation}\label{eq:numerical-pointwise}
 \left(\frac{(a_i+c_i)^{a_i+c_i}}{a_i^{a_i}c_i^{c_i}}\right)^{1/d_i}
 \leq z_i^{-\frac{1}{d_i}\left(a_i+\frac{b_i c_i}{2n}\right)}
 \leq z_i^{-\frac12\left(1-\frac{b_i}{n}\right)}
 =\frac{1-z_i}{\sqrt{z_i}}.
\end{equation}
If $k<n$, set $z_{k+1}=\cdots=z_n:=0$. Since
$b_i-b_{i+1}=n\bigl(\psi(z_i)-\psi(z_{i+1})\bigr)$,
the choice of $\delta_n$ and \eqref{eq:numerical-pointwise} imply
$$
 \sum_{i=0}^{k-1}(b_i-b_{i+1})
 \left(\frac{(a_i+c_i)^{a_i+c_i}}{a_i^{a_i}c_i^{c_i}}\right)^{1/d_i}
 \leq n(C_0-\delta_n).
$$
This proves \eqref{eq:numerical-sum} when $r=1$.

Now let $r\geq1$. If $a_i,c_i>0$, then $d_i<n$. Since $\frac{(a+c)^{a+c}}{a^ac^c}$ is nondecreasing in both variables $a,c$, and $a_i\leq\frac{d_i}{2},c_i\leq n-d_i$, one has
$$\left(\frac{(a_i+c_i)^{a_i+c_i}}{a_i^{a_i}c_i^{c_i}}\right)^{1/d_i}
\leq \left(1+\frac{d_i}{2(n-d_i)}\right)^{\frac{n-d_i}{d_i}}
\left(\frac{2n-d_i}{d_i}\right)^{1/2}
\leq \sqrt{\frac{2\mathrm{e}n}{d_i}}.$$
The last inequality follows from $1+u\leq\mathrm{e}^u$ and $2n-d_i\leq2n$. The above inequality also holds when $a_ic_i=0$.

By Bernoulli's inequality, $r^u-1\leq(r-1)u$ for $0\leq u\leq1$. Since $d_i\geq\max\{1,t\}$ for $t\in[b_{i+1},b_i]$, we obtain
\begin{align*}
&\sum_{i=0}^{k-1}(b_i-b_{i+1})\bigl(r^{1/d_i}-1\bigr)
\left(\frac{(a_i+c_i)^{a_i+c_i}}{a_i^{a_i}c_i^{c_i}}\right)^{1/d_i}\\
\leq &(r-1)\sqrt{2\mathrm{e}n}
\sum_{i=0}^{k-1}\frac{b_i-b_{i+1}}{d_i^{3/2}}\\
\leq &(r-1)\sqrt{2\mathrm{e}n}\left(1+\int_1^n\frac{dt}{t^{3/2}}\right)
\leq3(r-1)\sqrt{2\mathrm{e}n}.
\end{align*}
Combining the above inequality with the case $r=1$ proves \eqref{eq:numerical-sum}.
\end{proof}




\begin{lemma}\label{lem:one-step-cutting}
Let $(X,\Delta)$ be a pair with $X$ smooth projective, let $L$ be an ample $\mathbb Q$-divisor, and let $S\subset X$ be a nonempty finite set of closed points. If $\Delta=0$, set $W:=X$. Otherwise, assume that $(X,\Delta)$ is log canonical near every point of $S$ but not klt at any point of $S$, and that $W:=W_y(X,\Delta)$ is independent of $y\in S$. Suppose that $d:=\dim W>0$. Then for every $\varepsilon>0$, there exist a nonempty subset $S'\subseteq S$, a subvariety $W'\subsetneq W$, an effective $\mathbb Q$-divisor $D\sim_{\mathbb Q}L$, and a rational number $t\geq0$ such that, for $\Delta':=\Delta+tD$, the pair $(X,\Delta')$ is log canonical near every point of $S'$, one has $W_y(X,\Delta')=W'$ for every $y\in S'$, and
$$
b_y(X,\Delta')\leq b_y(X,\Delta)
-t\left(\frac{L^d\cdot W}{|S|\mult_yW}\right)^{1/d}+\varepsilon
$$
for every $y\in S'$. Moreover, every point of $S\setminus S'$ is contained in a non-klt center of $(X,\Delta')$ which contains no point of $S'$.
\end{lemma}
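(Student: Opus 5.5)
We follow the standard Angehrn--Siu / Helmke / Ghidelli--Lacini cutting argument (cf. \cite[Proposition~3.2]{GL24}), working simultaneously at all points of $S$.

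\textbf{Step 1: a divisor singular at all of $S$.} Set $q_0:=\bigl((L^d\cdot W)/|S|\bigr)^{1/d}$ and $\beta_y:=b_y(X,\Delta)$. For $y\in S$, let $\m_{W,y}$ be the maximal ideal of $\mathcal O_{W,y}$; the Hilbert--Samuel multiplicity of $\m_{W,y}$ is $\mult_yW$. Choose rationals $\beta'_y<\beta_y$ close to $\beta_y$ and a large divisible $m$. Asymptotic Riemann--Roch on $W$ gives
\[
h^0\bigl(W,\mathcal O_W(mL)\bigr)=\frac{(L^d\cdot W)}{d!}m^d+O(m^{d-1}).
\]
The number of conditions imposed by vanishing to order $\lceil m\,\beta'_y(\mult_yW)^{-1/d}q_0\rceil$ along $\m_{W,y}$ at each $y\in S$ is
\[
\sum_{y\in S}\frac{\mult_yW}{d!}\bigl(m\beta'_y q_0(\mult_yW)^{-1/d}\bigr)^d+O(m^{d-1})=\frac{m^dq_0^d}{d!}\sum_{y\in S}\beta_y'^{\,d}+O(m^{d-1}).
\]
To obtain a cleaner inequality, use orders $m(\beta'_y/\beta_y)\,q_0(\mult_yW)^{-1/d}\cdot\beta_y$ scaled so that the total count is strictly less than $(L^d\cdot W)m^d/d!$. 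The crucial point is that each $\beta_y\le d$ by Theorem~\ref{thm:multiplicity-bound}, and the per-point weights are chosen so that the sum is governed by $L^d\cdot W/|S|$ times $|S|$.

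This yields a section $\sigma\in H^0(W,mL|_W)$ with $\ord_{\m_{W,y}}\sigma>m\,\beta_y\,\tau_y$ for all $y\in S$, where
\[
\tau_y^{-1}:=\Bigl(\frac{L^d\cdot W}{|S|\mult_yW}\Bigr)^{1/d}\cdot(1-\varepsilon').
\]
Serre vanishing for $mL\otimes\mathcal I_W$ lifts $\sigma$ to $G\in|mL|$ on $X$. Set $D_1:=\tfrac1m G\sim_{\mathbb Q}L$, and add a small multiple of a general member through $S$ so that $W\not\subseteq\operatorname{Supp}D_1$ is irrelevant.

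\textbf{Step 2: the threshold.} Let $t:=\min_{y\in S}\lct_y((X,\Delta);D_1)$, and let $S'$ be the set of points where this minimum is attained. Perturbing by tie-breaking (\cite[Lemma~2.8]{GL24}), with a small multiple of a general divisor and a small rescaling of $\Delta$ absorbed in $\varepsilon$, we may assume that the new minimal lc center $W'$ is the same for all $y\in S'$. The points of $S\setminus S'$ then lie in non-klt centers obtained by further increasing the coefficient; after the perturbation these centers avoid $S'$.

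We must show $W'\subsetneq W$. Near $y$, Proposition~\ref{prop:threshold} and inversion-of-adjunction-type reasoning give $t\le\beta_y\tau_y$. The place computing $b_y$ pairs with the high order of $D_1|_W$ to create a new lc center strictly inside $W$. Concretely, the adjunction data of Proposition~\ref{prop:adjunction-data}(iii) shows that adding $tD_1$ with $\ord(D_1|_W)>\beta_y\tau_y$ forces a divisor over $W$ centered at $y$ to become an lc place. Since $W$ is the minimal lc center, the new minimal center is contained in $W$, and it is proper because $W\not\subseteq\operatorname{Supp}D_1$.

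\textbf{Step 3: the local discrepancy bound.} The new $b_y$ drops: for the $\m_y$-valuation $F$ realizing $b_y$, we have $A_{X,\Delta'}(F)\le A_{X,\Delta}(F)-t\ord_FD_1$, together with a comparison $\ord_FD_1\gtrsim\ord_F(\m_y)\cdot(\text{order of }D_1)$. This gives
\[
b_y(X,\Delta')\le b_y(X,\Delta)-t\,\tau_y^{-1}+\varepsilon.
\]

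\textbf{Main obstacle.} The hardest part is this last comparison together with the choice of vanishing orders, so that the discrepancy drop is exactly proportional to $t$ with the constant $\bigl((L^d\cdot W)/(|S|\mult_yW)\bigr)^{1/d}$. This requires vanishing along $\m_{W,y}$ to be converted into vanishing along $\m_y$-divisors over $X$, via lifting from $W$ and the $\lct$ formula. I would follow \cite[Proposition~3.2, Remark~3.3]{GL24} closely here.
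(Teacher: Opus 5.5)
Your outline has roughly the right shape: construct a section of $mL|_W$ with prescribed vanishing at the points of $S$, lift it, take an lc threshold, then tie-break. As a proof, though, it has a genuine gap at its core and two further errors.

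\textbf{The core gap is Step~3.} Step~3 must turn the vanishing of $D_1|_W$ along $\m_{W,y}$ into a drop of $b_y$ on $X$, and the comparison you propose is false for a place computing $b_y(X,\Delta)$. Set $\theta_y:=\bigl(\frac{L^d\cdot W}{|S|\mult_yW}\bigr)^{1/d}$. For a local equation $g$ of $G$ at $y$, all you know is $g\in\m_y^{k}+I_W$ with $k\approx m\theta_y$. Hence $\ord_Fg\geq\min\{k\ord_F\m_y,\ord_FI_W\}$, and $\ord_FI_W$ can be as small as $\ord_F\m_y$.

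For example, near $y=0\in\mathbb A^2$ let $\Delta$ be the line $\{v=0\}$, so $W=\{v=0\}$, and let $D=\{v=u^k\}$. Then $D|_W$ has order $k$ and $b_y(X,\Delta)=1$ is computed by the exceptional divisor $E$ of the blow-up of $y$. But $\ord_ED=1$, so $E$ only gives $b_y(X,\Delta+tD)\leq1-t$. The bound $1-tk$ predicted by the lemma comes from the monomial valuation with $v(u)=1$, $v(v)=k$. This valuation also computes $b_y(X,\Delta)$, but it is pushed towards the lc place $\{v=0\}$.

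So you need valuations that (nearly) compute $b_y(X,\Delta)$ while having large value on $I_W$ relative to $\m_y$. Producing them from the fact that $W$ is an lc center is exactly the nontrivial input, and you have not supplied it. Your Step~3 works only when $\Delta=0$, where $I_W=0$ and the blow-up of $y$ suffices.

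For the same reason, Proposition~\ref{prop:threshold} cannot give $t\leq\beta_y\tau_y$ in Step~2. It bounds $b_y$ \emph{above} by discriminant data on $W$, whereas bounding $t$ needs the reverse, inversion-of-adjunction direction. That bound should instead follow from the discrepancy drop together with $b_y(X,\Delta')\geq0$.

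\textbf{Step~1: the vanishing orders are wrong.} The prescribed order at $y$ should be about $m\theta_y(1-\varepsilon')$, independent of $\beta_y$; then $\sum_{y\in S}\mult_yW\,\theta_y^d=L^d\cdot W$ makes the dimension count close. With your orders $m\beta'_yq_0(\mult_yW)^{-1/d}$ the count is $\frac{m^d(L^d\cdot W)}{d!}\cdot\frac{1}{|S|}\sum_y(\beta'_y)^d$. This exceeds $h^0(W,mL|_W)$ once the $\beta_y$ exceed $1$, e.g.\ for $\Delta=0$ and $\beta_y=n$. Your conclusion $\ord\sigma>m\beta_y\tau_y$ also has $\theta_y$ inverted.

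\textbf{Step~2: the threshold should be a maximum, not a minimum.} With $t=\min_y\lct_y((X,\Delta);D_1)$, every $y\in S\setminus S'$ has $t<\lct_y$. So $(X,\Delta')$ stays lc near $y$, and every non-klt center through $y$ is an lc center of $(X,\Delta)$, hence contains $W\supseteq S'$. The last assertion of the lemma therefore fails. Raising coefficients to repair it would either destroy log canonicity at $S'$ or spoil the inequality. Take instead $t=\max_y\lct_y((X,\Delta);D_1)$ and let $S'$ be the set of maximizers. Then points of $S\setminus S'$ become non-lc, and their non-lc centers automatically miss the lc points of $S'$. Making $W_y(X,\Delta')$ independent of $y\in S'$ still requires an actual tie-breaking argument, not just an appeal to one.

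The paper does none of this itself. It proves the lemma by quoting \cite[Proposition~3.2]{GL24} with both finite sets equal to $S$, and \cite[Remark~3.3]{GL24} when $\Delta=0$. Since your proposal defers its hardest step to the same source, the proof here is that citation; Steps~1--3 as written should be corrected as above or dropped.
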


\begin{proof}
The case $\Delta=0$ follows from \cite[Remark~3.3]{GL24}.
The other case follows from \cite[Proposition~3.2]{GL24} with both finite sets equal to $S$. 
\end{proof}

For a big Cartier divisor $L$, let $\mathbf B_+(L)$ denote its augmented base locus. Thus $\mathbf B_+(L)=\mathbf B(L-\varepsilon A)$ for an ample Cartier divisor $A$ and every sufficiently small positive rational $\varepsilon$, where $\mathbf B$ denotes the stable base locus \cite[Proposition~1.5]{ELMNP06}. If $L$ is ample, then $\mathbf B_+(L)=\emptyset$.

\begin{proposition}\label{prop:cutting}
Let $X$ be a smooth projective variety of dimension $n$, let $L$ be a nef and big Cartier divisor, and let $S\subset X\setminus\mathbf B_+(L)$ be a set of $r\geq1$ distinct closed points. Let $\delta_n>0$ be as in Proposition~\ref{prop:numerical-sum}. Then there exist a point $x\in S$, a rational number $s<(C_0-\delta_n/8)n+3(r-1)\sqrt{2\mathrm{e}n}$, and an effective $\mathbb Q$-divisor $\Delta\sim_{\mathbb Q}sL$ such that $(X,\Delta)$ is log canonical near $x$, the point $\{x\}$ is the unique lc center through $x$, and $(X,\Delta)$ is not klt at any point of $S$.
\end{proposition}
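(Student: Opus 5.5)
We will build $\Delta$ by iterating Lemma~\ref{lem:one-step-cutting}, keeping track of a shrinking set of surviving points, and finish with a tie-breaking perturbation.

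\emph{Setting up the induction.} Since $L$ is nef and big and $S\cap\mathbf B_+(L)=\emptyset$, write $L\sim_{\mathbb Q}A+N$ with $A$ ample, $N$ effective and $S\cap\operatorname{Supp}N=\emptyset$. Fix $0<\eta\ll1$ and work with the ample $\mathbb Q$-divisor $L_\eta:=(1-\eta)L+\eta A\sim_{\mathbb Q}L-\eta N$. Any $D\sim_{\mathbb Q}L_\eta$ gives $D+\eta N\sim_{\mathbb Q}L$, and adding $\eta N$ does not change the singularities near $S$. Note $(L_\eta^{d}\cdot W)\to(L^d\cdot W)$ as $\eta\to0$, and $L^d\cdot W>0$ for every positive-dimensional subvariety $W$ meeting $X\setminus\mathbf B_+(L)$.

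Start from $\Delta_0=0$, $S_0=S$, $W_0=X$, $b_0=n$. Given $(\Delta_i,S_i,W_i)$ with $d_i=\dim W_i>0$, apply Lemma~\ref{lem:one-step-cutting} to $L_\eta$ and $S_i$ with a tiny $\varepsilon_i$. This produces $S_{i+1}\subseteq S_i$, $W_{i+1}\subsetneq W_i$, and $t_{i+1}$ with
\[
t_{i+1}\leq(b_i-b_{i+1}+\varepsilon_i)\Bigl(\frac{|S_i|\mult_yW_i}{L_\eta^{d_i}\cdot W_i}\Bigr)^{1/d_i}.
\]
Points dropped from $S_i$ remain in non-klt centers, and non-klt-ness persists when effective divisors are added. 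Hence the final pair is non-klt at every point of $S$. After $k\le n$ steps, $W_k=\{y\}$ for all $y\in S_k$. Pick $x\in S_k$ and use the local discrepancies $b_i=b_x(X,\Delta_i)$ along this chain.

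\emph{Numerical bound.} This is the crux. Because $L$ is Cartier and nef and big, $L^{d_i}\cdot W_i$ is a positive integer, so it is at least $1$. Then the above is at most $(1+o(1))(b_i-b_{i+1}+\varepsilon_i)\,r^{1/d_i}(\mult_xW_i)^{1/d_i}$. Bound $\mult_xW_i$ by Theorem~\ref{thm:multiplicity-bound} with $a_i=(d_i-b_i)/2$ and $c_i\le n-d_i$, since $W_i\subset X$ is smooth. Summing and using Proposition~\ref{prop:numerical-sum} gives
\[
\sum t_{i+1}\le (C_0-\delta_n)n+3(r-1)\sqrt{2\mathrm e n}+O(\textstyle\sum\varepsilon_i+\eta).
\]
Choosing $\varepsilon_i,\eta$ small makes this strictly less than $(C_0-\delta_n/4)n+3(r-1)\sqrt{2\mathrm en}$.

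The main obstacle I expect is the bookkeeping at the boundary cases. When $\Delta_i=0$ the definition $W_0=X$ is needed, and $b_i$ drops to $0$ exactly when the center becomes $\{x\}$, so the hypothesis $b_k=0$ of Proposition~\ref{prop:numerical-sum} holds. One must also check that $x$ survives all steps, so the inequality with $|S_i|\le r$ holds at $x$ at each stage; this is guaranteed because $x\in S_k\subseteq S_i$.

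\emph{Tie-breaking.} Now $(X,\Delta_k)$ is lc near $x$ with minimal lc center $\{x\}$, though possibly with other lc centers through $x$. Use tie-breaking (cf. \cite[Lemma~2.8]{GL24}). Choose an effective $G\sim_{\mathbb Q}L_\eta$ with $x\notin\operatorname{Supp}G$ containing the other lc centers through $x$; this is possible since $L_\eta$ is ample. Set $\Delta=(1-\tau)\Delta_k+\tau' G+$ a small multiple of a divisor with high multiplicity at $x$, restoring $\{x\}$ as an lc center. For small $\tau,\tau'$, $\{x\}$ becomes the unique lc center through $x$, and the pair stays non-klt at points of $S$. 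The coefficient grows by an amount that can be made below $\delta_n n/8$. Finally adding $\eta$-multiples of $N$ gives $\Delta\sim_{\mathbb Q}sL$ with $s<(C_0-\delta_n/8)n+3(r-1)\sqrt{2\mathrm en}$.
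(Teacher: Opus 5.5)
Your architecture matches the paper's. You iterate Lemma~\ref{lem:one-step-cutting} from $\Delta_0=0$, bound each $t_{i+1}$ by Theorem~\ref{thm:multiplicity-bound} and Proposition~\ref{prop:numerical-sum}, tie-break, and reduce nef-and-big $L$ to the ample case via $L\sim_{\mathbb Q}A+N$ with $N$ avoiding $S$. Your $L_\eta$ is exactly the paper's $L+tA$ divided by $1+t$, with $\eta=t/(1+t)$. One small repair in the numerical step: the $W_i$ are produced after $\eta$ is fixed, so the pointwise limit $L_\eta^{d}\cdot W\to L^{d}\cdot W$ does not give a uniform $1+o(1)$. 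Use instead $L_\eta^{d}\cdot W\geq(1-\eta)^{d}L^{d}\cdot W\geq(1-\eta)^{d}$, which follows by expanding $((1-\eta)L+\eta A)^{d}\cdot W$ into nonnegative mixed terms. The paper avoids this because $(L+tA)^{d}\cdot W\geq L^{d}\cdot W\geq1$.

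The genuine problem is the tie-breaking paragraph.

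\emph{The divisor $G$ is impossible as specified.} An effective $G$ with $x\notin\operatorname{Supp}G$ cannot contain any lc center through $x$, since all of them contain $x$. Adding a divisor through those centers would in any case push them towards non-lc rather than break the tie. Near $x$ no such $G$ is needed. Since $X$ is smooth, every lc place $E$ of $(X,\Delta_k)$ has $\ord_E\Delta_k=A_{X,0}(E)\geq1$, so $(X,(1-\tau)\Delta_k)$ is klt near $x$. Then add $cD'$, with $D'$ general in some $|\mathfrak m_x\otimes\mathcal O_X(mL_\eta)|$ and $c$ the resulting lct at $x$. Only divisors centered at $x$ have positive order along $\mathfrak m_x$, so $\{x\}$ becomes the unique lc center through $x$, and $c=O(\tau)$.

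\emph{Non-klt-ness at the other points is lost.} If $(X,\Delta_k)$ is log canonical near some $y\in S\setminus\{x\}$, then $(X,(1-\tau)\Delta_k)$ is klt at $y$. So your claim that the pair ``stays non-klt at points of $S$'' is unjustified. This is where a divisor avoiding $x$ is really needed. It must contain non-klt centers of $(X,\Delta_k)$ through the points of $S\setminus\{x\}$, with coefficient a fixed multiple of $\tau$. That is only possible if those centers do not contain $x$.

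\emph{The missing clause.} Lemma~\ref{lem:one-step-cutting} says each point of $S\setminus S'$ lies in a non-klt center containing no point of $S'$. You dropped this when you wrote ``points dropped from $S_i$ remain in non-klt centers''. The clause propagates through the iteration because $\Delta_{i+1}\geq\Delta_i$. It is exactly the hypothesis under which the paper invokes \cite[Lemma~2.8]{GL24}.

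With this clause recorded, and the tie-breaking done as above or by citing \cite[Lemma~2.8]{GL24} directly as the paper does, your argument goes through.
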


\begin{proof}
We first carry out the construction for an ample $\mathbb Q$-divisor $L$ satisfying $L^{\dim W}\cdot W\geq1$ for every positive-dimensional subvariety $W\subseteq X$ with $W\cap S\neq\emptyset$. Set $\Delta_0:=0$, $S_0:=S$, $W_0:=X$, and $d_0:=n$.

Suppose that $\Delta_i,S_i,W_i$, and $d_i$ have been constructed and that $d_i>0$. Since $S_i$ is finite, apply Lemma~\ref{lem:one-step-cutting} with $S=S_i$ and $\varepsilon>0$ sufficiently small. There exist a nonempty subset $S_{i+1}\subseteq S_i$, an effective $\mathbb Q$-divisor $D_{i+1}\sim_{\mathbb Q}L$, and a rational number $t_{i+1}\geq0$ such that, for $\Delta_{i+1}:=\Delta_i+t_{i+1}D_{i+1}$, the pair $(X,\Delta_{i+1})$ is log canonical near every point of $S_{i+1}$, one has $W_y(X,\Delta_{i+1})=W_{i+1}\subsetneq W_i$ for every $y\in S_{i+1}$, and
$$
 t_{i+1}<\bigl(b_y(X,\Delta_i)-b_y(X,\Delta_{i+1})\bigr)
 \left(\frac{r\mult_yW_i}{L^{d_i}\cdot W_i}\right)^{1/d_i}
 +\frac{\delta_n}{2}
$$
for every $y\in S_{i+1}$, where we use $|S_i|\leq r$. Set $d_{i+1}:=\dim W_{i+1}$. The lemma and $\Delta_{i+1}\geq\Delta_i$ imply inductively that every point of $S\setminus S_{i+1}$ is contained in a non-klt center of $(X,\Delta_{i+1})$ which contains no point of $S_{i+1}$.

Since $d_{i+1}<d_i$, the construction terminates with $n=d_0>d_1>\cdots>d_{k-1}>d_k=0$ for some $k\leq n$. Choose $x\in S_k$. Then $W_k=S_k=\{x\}$. Set $b_i:=b_x(X,\Delta_i)$ for $0\leq i\leq k$. Since $\{x\}$ is an lc center of $(X,\Delta_k)$, we have $b_k=0$. Moreover, $\Delta_{i+1}\geq\Delta_i$ implies $b_{i+1}\leq b_i$. Thus $n=b_0\geq b_1\geq\cdots\geq b_k=0$.



For $0\leq i<k$, let $a_i:=(d_i-b_i)/2$ and $c_i:=\edim\mathcal O_{W_i,x}-d_i$. By Theorem~\ref{thm:multiplicity-bound}, $a_i\geq0$ for $i>0$, while $a_0=c_0=0$. Since $\mathcal O_{W_i,x}$ is a quotient of the regular local ring $\mathcal O_{X,x}$, $0\leq c_i\leq n-d_i$. By Theorem~\ref{thm:multiplicity-bound} and Proposition~\ref{prop:numerical-sum}, we obtain
\begin{align*}
 \sum_{i=0}^{k-1}t_{i+1}
 < &\sum_{i=0}^{k-1}(b_i-b_{i+1})r^{1/d_i}
 \left(
 \frac{(a_i+c_i)^{a_i+c_i}}{a_i^{a_i}c_i^{c_i}}
 \right)^{1/d_i}+\frac{k\delta_n}{2}\\
 \leq &(C_0-\delta_n)n+3(r-1)\sqrt{2\mathrm{e}n}+\frac{k\delta_n}{2}\\
 \leq &\left(C_0-\frac{\delta_n}{2}\right)n+3(r-1)\sqrt{2\mathrm{e}n}.
\end{align*}

Every point of $S\setminus\{x\}$ is contained in a non-klt center of $(X,\Delta_k)$ which does not contain $x$. Apply \cite[Lemma~2.8]{GL24} with $S$ as above and $D=L$. There exists a positive rational number $q$ such that, for every sufficiently small rational $\lambda>0$, there is an effective $\mathbb Q$-divisor $D_\lambda\sim_{\mathbb Q}qL$ such that, for $\Delta:=(1-\lambda)\Delta_k+\lambda D_\lambda$, the pair $(X,\Delta)$ is log canonical near $x$, the point $\{x\}$ is the unique lc center through $x$, and $(X,\Delta)$ is not klt at any point of $S$. Moreover, $\Delta\sim_{\mathbb Q}sL$, where $s:=(1-\lambda)\sum_{i=0}^{k-1}t_{i+1}+\lambda q$. Choose $\lambda$ such that $s<(C_0-\delta_n/4)n+3(r-1)\sqrt{2\mathrm{e}n}$. 

Now assume that $L$ is a nef and big Cartier divisor and $S\subset X\setminus\mathbf B_+(L)$. By \cite[Proposition~1.5]{ELMNP06}, there exist an ample $\mathbb Q$-divisor $A$ and an effective $\mathbb Q$-divisor $E$ such that $L\sim_{\mathbb Q}A+E$ and $S\cap\operatorname{Supp}E=\emptyset$.

If $W$ is a positive-dimensional subvariety of $X$ with
$W\cap S\neq\emptyset$, then
$W\not\subseteq\operatorname{Supp}E$.
Thus $L|_W$ is nef and big, and $L^{\dim W}\cdot W\geq1$.
Thus for every positive rational number $t$, the divisor $L+tA$
is ample and $(L+tA)^{\dim W}\cdot W\geq1$ for every such $W$.

Choose a positive rational number $t$ such that
$$
(1+t)\left((C_0-\delta_n/4)n+3(r-1)\sqrt{2\mathrm{e}n}\right)
<(C_0-\delta_n/8)n+3(r-1)\sqrt{2\mathrm{e}n}.
$$
By the ample case applied to $L+tA$, there exist a point $x\in S$ and an effective $\mathbb Q$-divisor $\Delta'\sim_{\mathbb Q}s'(L+tA)$, where $s'<(C_0-\delta_n/4)n+3(r-1)\sqrt{2\mathrm{e}n}$, such that $(X,\Delta')$ is log canonical near $x$, $\{x\}$ is the unique lc center through $x$, and $(X,\Delta')$ is not klt at any point of $S$. Set $s:=(1+t)s'$ and $\Delta:=\Delta'+s'tE$. Then $\Delta=\Delta'$ near $S$ and $\Delta\sim_{\mathbb Q}sL$. By the choice of $t$, $s<(C_0-\delta_n/8)n+3(r-1)\sqrt{2\mathrm{e}n}$. This proves the proposition.
\end{proof}


\begin{proof}[Proof of Theorem~\ref{thm:main}]
Fix a closed point $x\in X$. By Proposition~\ref{prop:cutting} with $S=\{x\}$, there is an effective $\mathbb Q$-divisor $\Delta\sim_{\mathbb Q}sL$, where $s<C_0n\leq m$, such that $(X,\Delta)$ is log canonical near $x$ and $\{x\}$ is the unique lc center through $x$. Let $\mathcal J:=\mathcal J(X,\Delta)$ and consider
\begin{equation}\label{eq:nadel-exact-sequence}
 0\longrightarrow\mathcal O_X(K_X+mL)\otimes\mathcal J
 \longrightarrow\mathcal O_X(K_X+mL)
 \longrightarrow\mathcal O_X(K_X+mL)\otimes\mathcal O_X/\mathcal J
 \longrightarrow0.
\end{equation}
Since $\{x\}$ is the unique lc center through $x$, the point $x$ is an isolated component of $\operatorname{Supp}(\mathcal O_X/\mathcal J)$. Hence $\mathcal O_X(K_X+mL)\otimes\mathcal O_X/\mathcal J$ has a direct summand supported at $x$, and this summand surjects onto $\mathcal O_X(K_X+mL)|_x$ (cf. \cite[Proof of Theorem~5.8]{Kol97}, \cite[Proof of Theorem~5.2]{GL24}). Since $(m-s)L$ is ample, by Nadel vanishing \cite[Theorem~9.4.17]{Laz04}, we have
$$
 H^1\bigl(X,\mathcal O_X(K_X+mL)\otimes\mathcal J\bigr)=0.
$$
Therefore, taking cohomology of \eqref{eq:nadel-exact-sequence}, we obtain a surjection
$$
 H^0\bigl(X,\mathcal O_X(K_X+mL)\bigr)
 \twoheadrightarrow\mathcal O_X(K_X+mL)|_x.
$$
Thus $x$ is not a base point of $K_X+mL$. Since this holds for every closed point $x\in X$, $K_X+mL$ is globally generated.
\end{proof}

\begin{proof}[Proof of Corollary~\ref{cor:2n}]
This follows from Theorem~\ref{thm:main}, since $C_0<2$.
\end{proof}

We say that a line bundle $\mathcal L$ on $X$ \emph{separates $r$ points} if the restriction map $H^0(X,\mathcal L)\to\bigoplus_{x\in S}\mathcal L|_x$ is surjective for every set $S\subset X$ of $r$ distinct closed points.


\begin{theorem}\label{thm:point-separation}
Let $X$ be a smooth projective variety of dimension $n$, let $L$ be a nef and big Cartier divisor, and let $N$ be a nef Cartier divisor. Let $r$ be a positive integer. Then $K_X+mL+N$ separates any $r$ distinct closed points in $X\setminus\mathbf B_+(L)$ for every integer $m\geq\lceil C_0n+3(r-1)\sqrt{2\mathrm{e}n}\rceil$. In particular, $\operatorname{Bs}|K_X+mL+N|\subseteq\mathbf B_+(L)$ for every integer $m\geq\lceil C_0n\rceil$.
\end{theorem}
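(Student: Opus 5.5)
The plan is to prove this by induction on $r$, using Proposition~\ref{prop:cutting} and Nadel vanishing as in the proof of Theorem~\ref{thm:main}. Fix $m\geq\lceil C_0n+3(r-1)\sqrt{2\mathrm{e}n}\rceil$ and a set $S\subset X\setminus\mathbf B_+(L)$ of $r$ distinct closed points. The goal is surjectivity of
\[
H^0(X,\mathcal O_X(K_X+mL+N))\to\bigoplus_{y\in S}\mathcal O_X(K_X+mL+N)|_y .
\]
Since the bound on $m$ is nondecreasing in $r$, the induction hypothesis applies to every proper subset of $S$.

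\textbf{Choosing the boundary.} Apply Proposition~\ref{prop:cutting} to $S$. This gives a point $x\in S$, a rational number $s<(C_0-\delta_n/8)n+3(r-1)\sqrt{2\mathrm{e}n}\leq m$, and an effective $\Delta\sim_{\mathbb Q}sL$ with the following properties: $(X,\Delta)$ is lc near $x$, $\{x\}$ is the unique lc center through $x$, and $(X,\Delta)$ is non-klt at every point of $S$. Set $\mathcal J:=\mathcal J(X,\Delta)$. Then $mL+N-\Delta\equiv(m-s)L+N$ is nef and big, since $m-s>0$, $L$ is nef and big, and $N$ is nef. Nadel vanishing in its nef-and-big form (\cite[Theorem~9.4.17]{Laz04}) gives $H^1(X,\mathcal O_X(K_X+mL+N)\otimes\mathcal J)=0$. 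Hence $H^0(X,\mathcal O_X(K_X+mL+N))$ surjects onto $H^0$ of $\mathcal O_X(K_X+mL+N)\otimes\mathcal O_X/\mathcal J$.

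\textbf{Separating $x$ from the rest.} Every point of $S$ lies in $\operatorname{Supp}(\mathcal O_X/\mathcal J)$, and $x$ is an isolated component of that support. So $\mathcal O_X/\mathcal J=\mathcal O_{\{x\}}'\oplus\mathcal Q$, where the first summand is supported at $x$ with reduced structure $\mathcal O_X/\m_x$ (because $\Delta$ is lc at $x$, $\mathcal J$ is $\m_x$ near $x$). Since $\mathcal Q$ vanishes at $x$, the argument above produces a section $\sigma_x$ that is nonzero at $x$ and vanishes at every $y\in S\setminus\{x\}$, the latter because those $y$ lie in the cosupport of $\mathcal J$. The points of $S\setminus\{x\}$ may share non-isolated components of $\operatorname{Supp}(\mathcal O_X/\mathcal J)$, so we cannot separate them all this way; that is what the induction handles. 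Next apply the induction hypothesis to $S\setminus\{x\}$, which has $r-1$ points. This gives sections realizing any prescribed values on $S\setminus\{x\}$. Correct their value at $x$ by subtracting a multiple of $\sigma_x$, which does not change the values on $S\setminus\{x\}$. Together with $\sigma_x$, these span the full target, so surjectivity follows. The base case $r=1$ is the same argument without induction.

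\textbf{The ``in particular'' and the main obstacle.} For the final claim, take $r=1$ and $m\geq\lceil C_0n\rceil$. Every $x\notin\mathbf B_+(L)$ is then not a base point, so $\operatorname{Bs}|K_X+mL+N|\subseteq\mathbf B_+(L)$. The step I expect to need the most care is the local structure of $\mathcal O_X/\mathcal J$ at $x$. One has to make sure the direct summand at $x$ surjects onto the fiber at $x$ and that the section built from it vanishes at the other points of $S$. I would argue this exactly as cited in the proof of Theorem~\ref{thm:main} (\cite[Proof of Theorem~5.8]{Kol97}, \cite[Proof of Theorem~5.2]{GL24}). The extra ingredient is that Proposition~\ref{prop:cutting} makes $(X,\Delta)$ non-klt at every point of $S$, so all of $S$ lies in $V(\mathcal J)$. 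A second point to check is that Nadel vanishing applies with only nef and big positivity, which it does on smooth projective $X$.
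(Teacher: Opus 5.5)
Your proposal is correct and follows the paper's proof essentially step for step. It applies Proposition~\ref{prop:cutting} to $S$, uses Nadel vanishing for the nef and big class $(m-s)L+N$, and lifts a section that is nonzero at the isolated lc center $x$ and vanishes on $S\setminus\{x\}$ because $\mathcal J_y\subseteq\m_y$, with induction on $r$ handling the remaining points. Your extra remark that $\mathcal J=\m_x$ near $x$ holds because multiplier ideals of lc pairs are radical; it is true, but the argument does not need it.
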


\begin{proof}
We argue by induction on $r$. Let $S\subset X\setminus\mathbf B_+(L)$ be a set of $r$ distinct closed points. For every nonempty subset $T\subseteq S$, let
\begin{equation}\label{eq:point-separation-restriction}
\rho_T\colon H^0(X,\mathcal O_X(K_X+mL+N))
\longrightarrow\bigoplus_{y\in T}\mathcal O_X(K_X+mL+N)|_y
\end{equation}
be the restriction map. We show that $\rho_S$ is surjective.
By Proposition~\ref{prop:cutting}, there exist a point $x\in S$ and an effective $\mathbb Q$-divisor $\Delta\sim_{\mathbb Q}sL$, where $s<C_0n+3(r-1)\sqrt{2\mathrm{e}n}\leq m$, such that $(X,\Delta)$ is log canonical near $x$, $\{x\}$ is the unique lc center through $x$, and $(X,\Delta)$ is not klt at any point of $S$.


Set $\mathcal J:=\mathcal J(X,\Delta)$. Since $mL+N-\Delta\sim_{\mathbb Q}(m-s)L+N$ is nef and big, Nadel vanishing \cite[Theorem~9.4.17]{Laz04} implies
$$
H^0(X,\mathcal O_X(K_X+mL+N))
\twoheadrightarrow
H^0(X,\mathcal O_X(K_X+mL+N)\otimes\mathcal O_X/\mathcal J).
$$
Arguing as in the proof of Theorem~\ref{thm:main} and using $\mathcal J_y\subseteq\m_y$ for every $y\in S\setminus\{x\}$, we obtain a section $\sigma$ of $K_X+mL+N$ which does not vanish at $x$ and vanishes on $S\setminus\{x\}$. This proves the case $r=1$. 

For $r>1$, $\rho_{S\setminus\{x\}}$ is surjective by induction. Together with $\sigma|_x\neq0$ and $\sigma|_y=0$ for every $y\in S\setminus\{x\}$, this implies that $\rho_S$ is surjective.
\end{proof}

\begin{corollary}\label{cor:adjoint-morphisms}
Let $X$ be a smooth projective variety of dimension $n$ and let $L$ be an ample Cartier divisor. Then for every integer $m\geq\lceil C_0n+3\sqrt{2\mathrm{e}n}\rceil$, the morphism defined by $|K_X+mL|$ is finite and is a homeomorphism onto its image.
\end{corollary}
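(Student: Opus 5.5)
Let $\varphi\colon X\to\mathbb P^N$ be the morphism defined by $|K_X+mL|$, with $m\geq\lceil C_0n+3\sqrt{2\mathrm{e}n}\rceil$. Since $L$ is ample, $\mathbf B_+(L)=\emptyset$. Theorem~\ref{thm:point-separation} with $N=0$ and $r=1$ then shows that $K_X+mL$ is globally generated, because $m\geq\lceil C_0n\rceil$. Hence $\varphi$ is a morphism. The same theorem with $r=2$ shows that $K_X+mL$ separates any two distinct closed points of $X$, because $m\geq\lceil C_0n+3\sqrt{2\mathrm{e}n}\rceil$. Concretely, for $x\neq y$ the restriction map onto $\mathcal O(K_X+mL)|_x\oplus\mathcal O(K_X+mL)|_y$ is surjective. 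So some section vanishes at $y$ but not at $x$, which gives $\varphi(x)\neq\varphi(y)$. Thus $\varphi$ is injective on closed points.

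Finiteness comes next. $\varphi$ is projective because $X$ is projective. It is also quasi-finite, since every fiber over a closed point contains at most one closed point, and hence is a single reduced-or-not point scheme of finite length. A proper quasi-finite morphism is finite (Zariski's main theorem / \cite[EGA III]{}-type standard fact), so $\varphi\colon X\to\varphi(X)$ is finite. To avoid citing an undefined reference, I will simply say ``by the standard fact that proper quasi-finite morphisms are finite.'' Since $K_X+mL=\varphi^*\mathcal O(1)$ and $\varphi$ is finite, this divisor is ample, although we do not need that.

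It remains to show that $\varphi$ is a homeomorphism onto its image $V:=\varphi(X)$, taken with its reduced structure. Finite morphisms are closed, so $\varphi\colon X\to V$ is a continuous closed surjection. We must check that it is bijective on all scheme points, not just closed points. Take an irreducible closed subset $T\subseteq X$. Then $\varphi(T)$ is irreducible and closed, and injectivity on closed points gives $\varphi^{-1}(\varphi(T))=T$ on closed points. Since closed points are dense in closed subsets of varieties, $\varphi^{-1}(\varphi(T))=T$ as closed sets. Hence distinct generic points have distinct images, so $\varphi$ is bijective on points. A continuous closed bijection is a homeomorphism.

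The only step needing care is this passage from closed points to all points. The Jacobson property of varieties handles it, and otherwise the corollary is a direct consequence of Theorem~\ref{thm:point-separation}.
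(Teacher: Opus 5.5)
Your proof is correct and follows essentially the paper's argument: global generation, injectivity on closed points from Theorem~\ref{thm:point-separation} with $r=2$ and $N=0$, finiteness from properness plus finite fibers, and then homeomorphism onto the image. The differences are cosmetic. You get base-point freeness from the case $r=1$ of Theorem~\ref{thm:point-separation} rather than from Theorem~\ref{thm:main}, and you spell out the Jacobson-type argument for bijectivity on non-closed points, which the paper delegates to Hartshorne.
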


\begin{proof}
By Theorem~\ref{thm:main}, $K_X+mL$ is globally generated. Let $f\colon X\to Y$ be the morphism defined by $|K_X+mL|$, where $Y=f(X)$. By Theorem~\ref{thm:point-separation} with $r=2$ and $N=0$, $f$ is injective on closed points. Since $f$ is projective with finite fibers, it is finite. Hence $f$ is a homeomorphism onto its image (cf. \cite[Proof of Proposition~II.7.3]{Har77}).
\end{proof}

We next prove a birationality criterion for adjoint linear systems.

\begin{proposition}\label{prop:adjoint-birationality}
Let $Z$ be a normal projective variety of dimension $n$, let $L$ be a nef and big Cartier divisor, and let $D$ be a $\mathbb Q$-Cartier $\mathbb Q$-divisor. If $D-(C_0n+3\sqrt{2\mathrm{e}n})L$ is pseudo-effective, then $|K_Z+\lceil D\rceil|$ defines a birational map.
\end{proposition}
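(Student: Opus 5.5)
The plan is to reduce to a smooth model and use Theorem~\ref{thm:point-separation} with $r=2$ to separate two general points. The standard birationality criterion says the following: if for two general closed points $x,y$ of $Z$ there is a section of $K_Z+\lceil D\rceil$ vanishing at one and not the other, then the map is birational. So I will show that $|K_Z+\lceil D\rceil|$ separates two general points.

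Take a log resolution $\mu\colon X\to Z$, with $X$ smooth projective. Put $L_X:=\mu^*L$; this is nef, big and Cartier. Write $D_X:=\mu^*D$. Since $Z$ is normal, we have $\mu_*\mathcal O_X(K_X+\lceil \mu^*D\rceil)\subseteq\mathcal O_Z(K_Z+\lceil D\rceil)$: the pushforward of $K_X+\lceil\mu^*D\rceil$ agrees with $K_Z+\lceil D\rceil$ away from codimension two. Hence sections on $X$ give sections on $Z$, and it suffices to separate two general points of $X$ by $|K_X+\lceil D_X\rceil|$. Here $\mu$ is an isomorphism near general points.

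Set $t:=C_0n+3\sqrt{2\mathrm{e}n}$. The difference $P:=D_X-tL_X$ is pseudo-effective, but it is not necessarily nef, so Theorem~\ref{thm:point-separation} does not apply verbatim. I will instead rerun its proof. Write $P$ as a limit of effective divisors, using bigness: for small rational $\epsilon>0$, the divisor $P+\epsilon L_X$ is big, so $P+\epsilon L_X\sim_{\mathbb Q}G$ with $G$ effective. Choose the two points $x,y$ general, so that they lie outside $\mathbf B_+(L_X)$ and outside $\operatorname{Supp}G$, and so that $\mu$ is an isomorphism near them.

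Next apply Proposition~\ref{prop:cutting} with $S=\{x,y\}$ and $r=2$. It gives $x'\in S$ and $\Delta\sim_{\mathbb Q}sL_X$ with $s<(C_0-\delta_n/8)n+3\sqrt{2\mathrm{e}n}$ such that $\{x'\}$ is an isolated lc center and $(X,\Delta)$ is not klt at the other point. The slack $\delta_n/8$ lets me choose $\epsilon<\delta_n n/8$. Then
\[
\lceil D_X\rceil-\Delta-G-\{-D_X\}\sim_{\mathbb Q}(t+\epsilon-s)L_X,
\]
where I adjust the signs so that the fractional part of $D_X$ is absorbed into the boundary $\Delta':=\Delta+G+\{\lceil D_X\rceil-D_X\}$. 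The right-hand side is nef and big because $t+\epsilon-s>\epsilon>0$. Since $G$ and the fractional boundary avoid $x,y$ after taking them general, $\mathcal J(X,\Delta')$ agrees with $\mathcal J(X,\Delta)$ near $S$. Nadel vanishing then gives
\[
H^1\bigl(X,\mathcal O_X(K_X+\lceil D_X\rceil)\otimes\mathcal J(X,\Delta')\bigr)=0.
\]
As in the proof of Theorem~\ref{thm:point-separation}, this yields a section that is nonzero at $x'$ and vanishes at the other point.

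The main obstacle is that the fractional part $\{-D_X\}$ need not avoid $x,y$ a priori. It is supported on a fixed divisor, however, and general points avoid any fixed divisor, so this issue can be handled. The remaining care is to make the inequality $s+\epsilon<t$ strict. That holds because of the $\delta_n/8$ margin in Proposition~\ref{prop:cutting}.
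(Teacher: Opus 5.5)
Your proposal is correct and is essentially the paper's argument. You pass to a resolution, trade the pseudo-effective class for a big $\mathbb Q$-divisor $\sim_{\mathbb Q}G\geq0$, apply Proposition~\ref{prop:cutting} to the pullback of $L$ at points avoiding $\mathbf B_+$, the exceptional locus, $\operatorname{Supp}G$ and the round-up, absorb $G$ and $\lceil D_X\rceil-D_X$ into the boundary, use Nadel vanishing, and push sections forward to $Z$. Your use of the two-general-points criterion in place of the paper's induction on $|S|$ is fine, since the section you produce already shows the base locus is a proper closed subset.

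There are two cosmetic slips. First, the displayed class should be $(t-\epsilon-s)L_X$, which is exactly why you need $\epsilon<\delta_nn/8$. Second, since $t$ need not be rational, you should choose $\epsilon$ so that $t-\epsilon\in\mathbb Q$; the paper does this by picking a rational $q$ directly.
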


\begin{proof}
Let $\pi\colon X\to Z$ be a resolution which is an isomorphism over $Z_{\mathrm{sm}}$. Set $D':=K_X+\lceil\pi^*D\rceil$. Let $\delta_n>0$ be as in Proposition~\ref{prop:numerical-sum}. Choose a rational number $q$ such that $(C_0-\delta_n/16)n+3\sqrt{2\mathrm{e}n}<q<C_0n+3\sqrt{2\mathrm{e}n}$. Since $D-qL$ is big, there exists an effective $\mathbb Q$-divisor $F\sim_{\mathbb Q}\pi^*(D-qL)$. Set
$U:=X\setminus\left(\mathbf B_+(\pi^*L)\cup\operatorname{Exc}(\pi)
\cup\operatorname{Supp}(F+\lceil\pi^*D\rceil-\pi^*D)\right).$

Let $S\subset U$ be a set of one or two distinct closed points. For every nonempty subset $T\subseteq S$, let
\begin{equation}\label{eq:adjoint-birationality-restriction}
\rho_T\colon H^0(X,\mathcal O_X(D'))
\longrightarrow\bigoplus_{y\in T}\mathcal O_X(D')|_y
\end{equation}
be the restriction map. We prove by induction on $|S|$ that $\rho_S$ is surjective.

By Proposition~\ref{prop:cutting} applied to $\pi^*L$, there exist a point $x\in S$ and an effective $\mathbb Q$-divisor $\Delta'\sim_{\mathbb Q}s\pi^*L$, where $s<(C_0-\delta_n/8)n+3\sqrt{2\mathrm{e}n}<q$, such that $(X,\Delta')$ is log canonical near $x$, $\{x\}$ is the unique lc center through $x$, and $(X,\Delta')$ is not klt at any point of $S$.

Set $\Delta:=\Delta'+F+\lceil\pi^*D\rceil-\pi^*D$ and $\mathcal J:=\mathcal J(X,\Delta)$. Then $\Delta\geq0$, and $\Delta=\Delta'$ on $U$. Moreover, $D'-K_X-\Delta\sim_{\mathbb Q}(q-s)\pi^*L$ is nef and big. Nadel vanishing \cite[Theorem~9.4.17]{Laz04} implies
$$
H^0(X,\mathcal O_X(D'))
\twoheadrightarrow
H^0(X,\mathcal O_X(D')\otimes\mathcal O_X/\mathcal J).
$$
Since $x$ is an isolated point of $\operatorname{Supp}(\mathcal O_X/\mathcal J)$, we may lift a section of $\mathcal O_X(D')\otimes\mathcal O_X/\mathcal J$ supported at $x$ and nonzero at $x$ to a section $\sigma$ of $D'$. Then $\sigma|_x\neq0$, and $\sigma|_y=0$ for every $y\in S\setminus\{x\}$ since $\mathcal J_y\subseteq\m_y$.

If $|S|=1$, then $\rho_S$ is surjective since $\sigma|_x\neq0$. If $|S|=2$, then $\rho_{S\setminus\{x\}}$ is surjective by the case $|S|=1$, and $\rho_S$ is surjective since $\rho_{S\setminus\{x\}}(\sigma)=0$ and $\sigma|_x\neq0$. Thus $|D'|$ has no base points on $U$ and separates any two distinct closed points of $U$. Hence $|D'|$ defines a birational map.

Since $Z$ is normal and $\pi_*D'=K_Z+\lceil D\rceil$, we have a natural inclusion
$$
H^0(X,\mathcal O_X(D'))
\hookrightarrow H^0(Z,\mathcal O_Z(K_Z+\lceil D\rceil)).
$$
Thus $|K_Z+\lceil D\rceil|$ defines a birational map.
\end{proof}

\begin{corollary}\label{cor:polarized-birationality}
Let $(Z,B)$ be a projective pair of dimension $n$, and let $L,N$ be $\mathbb Q$-Cartier $\mathbb Q$-divisors on $Z$ such that $L$ is nef and big and $N$ is pseudo-effective. Let $I$ be a positive integer such that $IL$ is Cartier. Assume that $aL-(K_Z+B)$ is pseudo-effective for some rational number $a$. Then $|\lceil mL+N\rceil|$ defines a birational map for every positive integer $m\geq\lceil I(C_0n+3\sqrt{2\mathrm{e}n})+a\rceil$.
\end{corollary}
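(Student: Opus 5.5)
We reduce Corollary~\ref{cor:polarized-birationality} to Proposition~\ref{prop:adjoint-birationality}, applied to the nef and big Cartier divisor $L':=IL$ on a suitable model. The first step is to remove the boundary $B$ and the non-Cartier nature of $K_Z$. Let $\pi\colon X\to Z$ be a resolution that is an isomorphism over the smooth locus of $Z$ minus $\operatorname{Supp}B$, or simply any resolution. Since Proposition~\ref{prop:adjoint-birationality} only requires a normal projective variety, we may even apply it on $X$ itself and then push forward. Write
\[
\lceil mL+N\rceil = K_Z+\bigl\lceil mL+N-K_Z\bigr\rceil
\]
as Weil divisors, with $K_Z$ integral. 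The difficulty is that $mL+N-K_Z$ need not be $\mathbb Q$-Cartier when $N$ and $K_Z$ are not. For this reason we work on $X$ with $D:=\pi^*(mL+N)-K_X$ adjusted.

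Concretely, we pass to $X$ and set $D_X:=\pi^*(mL+N)-K_X$ modulo an integral correction. More precisely, we choose the integral divisor $\lceil \pi^*(mL+N)\rceil - K_X$, so that $K_X+\lceil D_X\rceil=\lceil\pi^*(mL+N)\rceil$ with $D_X:=\pi^*(mL+N)-K_X$. Since $X$ is smooth, $D_X$ is $\mathbb Q$-Cartier, and $\lceil D_X\rceil=\lceil\pi^*(mL+N)\rceil-K_X$.

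The key step is to check that
\[
D_X-(C_0n+3\sqrt{2\mathrm{e}n})\,\pi^*(IL)
\]
is pseudo-effective. Write $K_X+B_X=\pi^*(K_Z+B)$. The divisor $B_X$ may fail to be effective because of exceptional divisors with negative coefficients, and this is the main obstacle. To handle it, we first take a log resolution and write $-B_X=E^+-E^-$ with $E^\pm\ge0$, where $E^+$ is $\pi$-exceptional. Then
\[
D_X-(C_0n+3\sqrt{2\mathrm e n})I\pi^*L=\pi^*\bigl((m-I(C_0n+3\sqrt{2\mathrm e n})-a)L\bigr)+\pi^*\bigl(aL-(K_Z+B)\bigr)+\pi^*N+B_X.
\]
The first term is nef because $m\ge I(C_0n+3\sqrt{2\mathrm e n})+a$. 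The second and third terms are pseudo-effective, being pullbacks of pseudo-effective classes. The term $B_X$ is the problem.

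To fix this, we use the flexibility of the rounding: replace $K_X$ by $K_X+E^+$. Because $E^+$ is exceptional, we have $\pi_*\mathcal O_X(\lceil\pi^*(mL+N)\rceil+E^+)\subseteq\mathcal O_Z(\lceil mL+N\rceil)$, so sections on $X$ still inject into $H^0(Z,\lceil mL+N\rceil)$. Take $D:=\pi^*(mL+N)-K_X+E^+$. Then $D-(\dots)I\pi^*L$ equals the sum above with $B_X$ replaced by $\pi^*B$-type effective pieces ($E^-$ plus the strict transform of $B$), which is pseudo-effective. Since $\lceil D\rceil+K_X=\lceil\pi^*(mL+N)\rceil+E^+$ when $E^+$ is integral, we first round $E^+$ up to $\lceil E^+\rceil$; it is still exceptional.

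Finally, Proposition~\ref{prop:adjoint-birationality} on $X$ with $L'=\pi^*(IL)$, which is nef, big, and Cartier, gives that $|K_X+\lceil D\rceil|$ is birational. Its sections inject into those of $|\lceil mL+N\rceil|$ on $Z$ by pushing forward, since the extra divisor is exceptional and $Z$ is normal. A linear system containing a birational subsystem is birational, which gives the claim. The step I expect to be hardest is this bookkeeping of exceptional parts and roundups so that the pushforward inclusion holds. It may instead be cleaner to use $\lfloor\cdot\rfloor$ of the negative part of $B_X$.
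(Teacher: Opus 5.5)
Your argument is correct, but it takes a detour that the paper avoids.

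\textbf{The paper's route.} The paper applies Proposition~\ref{prop:adjoint-birationality} directly on $Z$, with $D:=mL+N-(K_Z+B)$ and the Cartier divisor $IL$.
\begin{itemize}
\item[(i)] This $D$ is $\mathbb Q$-Cartier because $K_Z+B$ is; that is part of $(Z,B)$ being a pair.
\item[(ii)] The difference $D-\frac{m-a}{I}(IL)=aL-(K_Z+B)+N$ is pseudo-effective.
\item[(iii)] Since $K_Z$ is an integral Weil divisor, $K_Z+\lceil D\rceil=\lceil mL+N-B\rceil$, and this is $\le\lceil mL+N\rceil$ because $B\ge0$.
\end{itemize}
The obstacle you identified, that $mL+N-K_Z$ need not be $\mathbb Q$-Cartier, disappears once you subtract $K_Z+B$ rather than $K_Z$. (Also, $N$ is $\mathbb Q$-Cartier by hypothesis, so only $K_Z$ was ever at issue.) The loss from subtracting $B$ is then absorbed by the effectivity of $B$ at the very end.

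\textbf{Your route.} You pass to a resolution and write $B_X=E^--E^+$ with $E^+$ exceptional, which uses $B\ge0$ in the same way. You then absorb $\lceil E^+\rceil$ into the divisor, apply the Proposition on $X$ with $\pi^*(IL)$, and push forward using $\pi_*\mathcal O_X(G)\subseteq\mathcal O_Z(\pi_*G)$ and $\pi_*\lceil\pi^*(mL+N)\rceil=\lceil mL+N\rceil$. All of these steps check out. The phrase ``$E^-$ plus the strict transform of $B$'' double counts, since $E^-$ already contains $\pi^{-1}_*B$, but this is harmless: $B_X+\lceil E^+\rceil=E^-+(\lceil E^+\rceil-E^+)\ge0$.

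\textbf{Comparison.} The only thing your version buys is that it needs Proposition~\ref{prop:adjoint-birationality} only for smooth varieties. However, the Proposition's own proof already passes to a resolution and handles the round-up discrepancy $\lceil\pi^*D\rceil-\pi^*D$. So you are essentially redoing that reduction by hand, and you pay for it with the exceptional bookkeeping you flagged as the hardest step.
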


\begin{proof}
Let $D:=mL+N-(K_Z+B)$. Since $(m-a)/I\geq C_0n+3\sqrt{2\mathrm{e}n}$ and $D-\frac{m-a}{I}(IL)=aL-(K_Z+B)+N$ is pseudo-effective, $|K_Z+\lceil D\rceil|$ defines a birational map by Proposition~\ref{prop:adjoint-birationality} applied to $D$ and $IL$. Since $B\geq0$, we have $K_Z+\lceil D\rceil=\lceil mL+N-B\rceil\leq\lceil mL+N\rceil$. This proves the corollary.
\end{proof}

\begin{remark}
If $-(K_Z+B)$ is pseudo-effective, $L$ is a nef and big Cartier divisor, and $N$ is a pseudo-effective Cartier divisor, then $|mL+N|$ defines a birational map for every integer $m\geq\lceil C_0n+3\sqrt{2\mathrm{e}n}\rceil$ by Corollary~\ref{cor:polarized-birationality} with $a=0$ and $I=1$. This applies, in particular, when $K_Z+B\equiv0$.
\end{remark}


\begin{corollary}\label{cor:pluricanonical-systems}
Let $(Z,B)$ be a projective pair of dimension $n$ and let $N$ be a pseudo-effective Cartier divisor. Let $I$ be a positive integer such that $I(K_Z+B)$ is Cartier. Then the following statements hold.
\begin{enumerate}
\item[(1)] If $K_Z+B$ is nef and big, then $|\lceil m(K_Z+B)\rceil+N|$ defines a birational map for every integer $m\geq1+\lceil I(C_0n+3\sqrt{2\mathrm{e}n})\rceil$.
\item[(2)] If $-(K_Z+B)$ is nef and big, then $|-\lfloor m(K_Z+B)\rfloor+N|$ defines a birational map for every integer $m\geq\lceil I(C_0n+3\sqrt{2\mathrm{e}n})\rceil-1$.
\end{enumerate}
\end{corollary}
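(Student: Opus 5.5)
Both parts will be reduced to Corollary~\ref{cor:polarized-birationality}, with $L:=\pm(K_Z+B)$ and an appropriate $N$. The work lies in checking the floor/ceiling bookkeeping and the pseudo-effectivity hypotheses.

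For (1), I set $L:=K_Z+B$, which is nef and big, and note that $IL$ is Cartier. I would take $a:=1$, so that $aL-(K_Z+B)=0$ is pseudo-effective. The corollary then gives that $|\lceil mL+N'\rceil|$ is birational for every integer $m\geq\lceil I(C_0n+3\sqrt{2\mathrm{e}n})+1\rceil=1+\lceil I(C_0n+3\sqrt{2\mathrm{e}n})\rceil$, for any pseudo-effective $\mathbb Q$-Cartier $N'$. Choosing $N':=N$, which is Cartier, gives $\lceil m(K_Z+B)+N\rceil=\lceil m(K_Z+B)\rceil+N$, and this is exactly the claim. The bound is exactly of this shape, which is reassuring.

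For (2), I set $L:=-(K_Z+B)$, which is nef and big, with $IL$ Cartier. Since $aL-(K_Z+B)=(a+1)L$, I can take $a:=-1$, which is pseudo-effective as $0$. The corollary then applies for $m\geq\lceil I(C_0n+3\sqrt{2\mathrm{e}n})\rceil-1$ and gives birationality of $|\lceil -m(K_Z+B)+N\rceil|$. Because $N$ is Cartier and $\lceil -y\rceil=-\lfloor y\rfloor$, this divisor equals $-\lfloor m(K_Z+B)\rfloor+N$. One point needs care: the corollary requires $m$ to be a positive integer, and the threshold $\lceil I(\cdots)\rceil-1$ is large and positive, so this is fine.

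I expect the only genuine obstacle to be the interpretation of $\lceil\cdot\rceil$ for non-integral Weil $\mathbb Q$-divisors. Specifically, one must check that $\lceil D+N\rceil=\lceil D\rceil+N$ componentwise when $N$ is integral, and that $\lceil -D\rceil=-\lfloor D\rfloor$. Both identities hold coefficientwise, so the main step is just verifying that the hypotheses of Corollary~\ref{cor:polarized-birationality} are met. If that corollary's inequality $K_Z+\lceil D\rceil\leq\lceil mL+N\rceil$ were used in a form requiring effectivity, it is already handled there via $B\geq0$.
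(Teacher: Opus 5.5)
Your proposal is correct and is exactly the paper's argument: apply Corollary~\ref{cor:polarized-birationality} with $L=K_Z+B$, $a=1$ for (1) and with $L=-(K_Z+B)$, $a=-1$ for (2). In both cases $aL-(K_Z+B)=0$, and you then use the coefficientwise identities $\lceil D+N\rceil=\lceil D\rceil+N$ for Cartier $N$ and $\lceil -D\rceil=-\lfloor D\rfloor$ to match the stated linear systems.
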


\begin{proof}
Apply Corollary~\ref{cor:polarized-birationality} with $L=K_Z+B$ and $a=1$ in (1), and with $L=-(K_Z+B)$ and $a=-1$ in (2).
\end{proof}

For $B=0$, the two systems in Corollary~\ref{cor:pluricanonical-systems} are $|mK_Z+N|$ and $|-mK_Z+N|$, respectively. For related anti-pluricanonical results, see \cite{Bir19,Liu19,JZ24}.

\begin{proof}[Proof of Theorem~\ref{thm:pluricanonical-birationality}]
This follows from Corollary~\ref{cor:pluricanonical-systems}\textup{(1)} with $B=N=0$ and $I=1$.
\end{proof}

It is interesting to ask the following questions which are related to Fujita's conjecture and \cite[Conjecture~0.1]{Cat22}.
\begin{question} 
Let $Z$ be a normal projective variety of dimension $n$, let $L$ be a nef and big Cartier divisor, and let $D$ be a $\mathbb Q$-Cartier $\mathbb Q$-divisor. If $D-(n+1)L$ is big, does $|K_Z+\lceil D\rceil|$ define a birational map? 
\end{question}

\begin{question}
Let $Z$ be a normal projective variety of dimension $n$. Assume that $K_Z$ is Cartier, nef and big. Then the
following statements hold.
\begin{enumerate}
\item[(1)] The linear system $|mK_Z|$ defines a birational map for
every integer $m\geq n+3$.
\item[(2)] If $K_Z^n\geq2$, then $|mK_Z|$ defines a birational map
for every integer $m\geq n+2$.
\end{enumerate}
\end{question}

\appendix

\section{Refined multiplicity bounds}
\label{sec:discrete-refinements}


The result in this appendix is not needed for the proof of Theorem~\ref{thm:main}.

\begin{proposition}\label{prop:discrete-artinian}
Let $(R,\m)$ be a $d$-dimensional Cohen--Macaulay local ring with infinite residue field, where $d\geq1$, and let $c:=\edim R-d$. Suppose that $e_1(\m)\leq a e_0(\m)$ for some $a\geq0$. If $c=0$, then $e_0(\m)=1$. If $c>0$, then $a\geq\frac{c}{c+1}$, and
\begin{equation}\label{eq:exact-artinian-multiplicity}
 e_0(\m)\leq
 \left\lfloor
 \frac{1}{s+1-a}\binom{c+s+1}{c+1}
 \right\rfloor,
\end{equation}
where $s:=\left\lfloor\frac{(c+1)a}{c}\right\rfloor$. 
\end{proposition}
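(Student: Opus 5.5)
The plan is to reduce to the Artinian reduction exactly as in Corollary~\ref{cor:local-algebra}, and then replace the AM--GM step of Proposition~\ref{prop:elementary-sequence} by a linear weighting argument adapted to an integer level $s$.

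\textbf{Setup.} Choose a minimal reduction $J$ of $\m$ and set $A:=R/J$, $\mathfrak n:=\m/J$ and $h_k:=\ell_A(\mathfrak n^k/\mathfrak n^{k+1})$. By Lemma~\ref{lem:artinian-reduction}:
\begin{itemize}
\item $e_0(\m)=N:=\sum_{k\geq0}h_k$;
\item $h_0=1$ and $h_1=c$;
\item $h_k\leq\binom{c+k-1}{k}$ when $c>0$;
\item $\sum_{k\geq1}kh_k\leq e_1(\m)\leq aN$.
\end{itemize}
The case $c=0$ is handled by Nakayama exactly as in Corollary~\ref{cor:local-algebra}, giving $e_0(\m)=1$. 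From now on assume $c>0$ and rewrite the last constraint as
\[
\sum_{k\geq0}(k-a)h_k\leq0 .
\]

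\textbf{Step 1: $a\geq\frac{c}{c+1}$.} Suppose instead that $a<\frac{c}{c+1}$. Then $a<1$, so every term with $k\geq2$ in the sum above is nonnegative. The $k=0$ and $k=1$ terms contribute $-a+c(1-a)$, so the constraint forces $-a+c(1-a)\leq0$. This is exactly $a\geq c/(c+1)$, a contradiction.

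\textbf{Step 2: the weighted bound.} Let $s$ be any integer with $s+1>a$. Since $s=\lfloor (c+1)a/c\rfloor$ satisfies $s+1>(c+1)a/c\geq a$, this choice qualifies. The constraint gives
\[
(s+1-a)N\leq(s+1)N-\sum_k kh_k=\sum_{k\geq0}(s+1-k)h_k .
\]
The terms with $k\geq s+1$ have nonpositive weight, so we drop them. For $k\leq s$ we bound $h_k\leq\binom{c+k-1}{k}$, which yields
\[
(s+1-a)N\leq\sum_{k=0}^{s}(s+1-k)\binom{c+k-1}{k}.
\]

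\textbf{Step 3: the combinatorial identity.} Write $s+1-k=\sum_{j=k}^{s}1$ and swap the order of summation. Two applications of the hockey-stick identity give
\[
\sum_{k=0}^{s}(s+1-k)\binom{c+k-1}{k}=\sum_{j=0}^{s}\binom{c+j}{j}=\binom{c+s+1}{c+1}.
\]
Dividing by $s+1-a>0$ and using that $N=e_0(\m)$ is an integer gives \eqref{eq:exact-artinian-multiplicity}.

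\textbf{Main obstacle.} The argument is short, so the only real care is bookkeeping. One must check that $s+1-a>0$ for the stated $s$, so the division is legitimate. One must also confirm that dropping the $k\geq s+1$ terms only weakens the inequality in the right direction. The specific value $s=\lfloor (c+1)a/c\rfloor$ is presumably the near-optimal choice of level, but the argument above works for any integer $s>a-1$. So I would not try to prove optimality of this $s$; it is not needed for the stated inequality.
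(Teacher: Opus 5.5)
Your proposal is correct and follows essentially the same route as the paper: the same Artinian reduction via a minimal reduction, the same truncation of $\sum_k(s+1-k)h_k$ at level $s$ using $h_k\leq\binom{c+k-1}{k}$, and the same hockey-stick identity. The only cosmetic difference is in Step~1. There the paper argues directly via $0\leq\sum_{k\geq2}\bigl((c+1)k-c\bigr)h_k=(c+1)\sum_{k\geq1}kh_k-ce_0(\m)\leq\bigl((c+1)a-c\bigr)e_0(\m)$, whereas you argue by contradiction.
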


\begin{proof}
Choose a minimal reduction $J$ of $\m$, and let $A=R/J$, $\mathfrak n=\m/J$, and $h_k$ be as in Lemma~\ref{lem:artinian-reduction}. Then $e_0(\m)=\sum_{k\geq0}h_k$, $h_0=1$, $h_1=c$, and
$\sum_{k\geq1}kh_k\leq e_1(\m)\leq a e_0(\m)$. If $c=0$, then $\mathfrak n/\mathfrak n^2=0$, so Nakayama's lemma implies $\mathfrak n=0$ and $e_0(\m)=1$. We may therefore assume that $c>0$. By Lemma~\ref{lem:artinian-reduction}, $0\leq h_k\leq\binom{c+k-1}{k}$ for every $k\geq0$. Since $h_0=1$ and $h_1=c$,
$$
 0\leq\sum_{k\geq2}\bigl((c+1)k-c\bigr)h_k
 =(c+1)\sum_{k\geq1}kh_k-ce_0(\m)
 \leq\bigl((c+1)a-c\bigr)e_0(\m).
$$
Since $e_0(\m)>0$, one has $a\geq\frac{c}{c+1}$.

Using $kh_k\geq(s+1)h_k$ for $k\geq s+1$ and
$h_k\leq\binom{c+k-1}{k}$ for $0\leq k\leq s$, we obtain
\begin{equation}\label{eq:discrete-first-moment-lower}
 \sum_{k\geq1}kh_k
 \geq (s+1)e_0(\m)-\sum_{k=0}^{s}(s+1-k)h_k
 \geq (s+1)e_0(\m)-\binom{c+s+1}{c+1}.
\end{equation}
Indeed,
$$\sum_{k=0}^{s}(s+1-k)\binom{c+k-1}{k}
 = \sum_{j=0}^{s}\sum_{k=0}^{j}\binom{c+k-1}{k}
 = \sum_{j=0}^{s}\binom{c+j}{c}
 =\binom{c+s+1}{c+1}.$$
Combining \eqref{eq:discrete-first-moment-lower} with
$\sum_{k\geq1}kh_k\leq ae_0(\m)$, we obtain
$$
 (s+1-a)e_0(\m)\leq\binom{c+s+1}{c+1}.
$$
Since $s+1>\frac{(c+1)a}{c}>a$, dividing by $s+1-a$ and taking the floor proves \eqref{eq:exact-artinian-multiplicity}.
\end{proof}

\begin{corollary}\label{cor:discrete-lc-center}
Assume that $(X,\Delta)$ is log canonical near a closed point $x$ but not klt at $x$, and $W:=W_x(X,\Delta)$ is positive-dimensional. Let $d:=\dim W$, $b:=b_x(X,\Delta)$, $a:=(d-b)/2$, and $c:=\edim\mathcal O_{W,x}-d$. If $c=0$, then $\mult_xW=1$. Assume that $c>0$, and let $s:=\left\lfloor\frac{(c+1)a}{c}\right\rfloor$. Then
$$
 \mult_xW\leq
 \left\lfloor
 \frac{1}{s+1-a}\binom{c+s+1}{c+1}
 \right\rfloor.
$$
\end{corollary}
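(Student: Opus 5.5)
The plan is to mirror the proof of Theorem~\ref{thm:multiplicity-bound}, replacing Corollary~\ref{cor:local-algebra} by Proposition~\ref{prop:discrete-artinian}.

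Set $R:=\mathcal O_{W,x}$ and $\m:=\m_{W,x}$. We first record the needed properties of $R$.
\begin{itemize}
\item[(1)] By Lemma~\ref{lem:minimal-lc-center}, $R$ is Cohen--Macaulay of dimension $d\geq1$.
\item[(2)] Its residue field is $\mathbb C$, hence infinite.
\item[(3)] By Remark~\ref{rem:normal Hilbert poly}, $R$ is analytically unramified, and $e_0(\m)=\mult_xW$.
\end{itemize}

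Next we obtain the Hilbert-coefficient inequality. Theorem~\ref{thm:local} gives $2\ebarone(\m)\leq(d-b)\mult_xW$, that is, $\ebarone(\m)\leq a\,e_0(\m)$. Lemma~\ref{lem:ordinary-normal-e1} gives $e_1(\m)\leq\ebarone(\m)$. Combining the two yields
\[
e_1(\m)\leq a\,e_0(\m).
\]
Theorem~\ref{thm:multiplicity-bound} gives $0<b\leq d$, so $a\geq0$. Thus the hypotheses of Proposition~\ref{prop:discrete-artinian} hold with this $a$ and $c=\edim R-d$.

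Applying Proposition~\ref{prop:discrete-artinian} finishes the argument. If $c=0$, it gives $\mult_xW=e_0(\m)=1$. If $c>0$, it gives $a\geq c/(c+1)$, and the displayed bound \eqref{eq:exact-artinian-multiplicity} with $s=\lfloor (c+1)a/c\rfloor$. This is exactly the claimed inequality.

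No step here is a real obstacle. The only point needing care is checking that every hypothesis of Proposition~\ref{prop:discrete-artinian} is met, in particular $a\geq0$ and the infinite residue field. Both follow from results already stated.
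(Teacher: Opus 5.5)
Your proposal is correct and follows the paper's proof essentially verbatim. The paper likewise uses Theorem~\ref{thm:multiplicity-bound} to get $a\geq0$, Lemma~\ref{lem:minimal-lc-center} for Cohen--Macaulayness with residue field $\mathbb C$, and Theorem~\ref{thm:local} together with Lemma~\ref{lem:ordinary-normal-e1} to get $e_1(\m)\leq a e_0(\m)$, and then applies Proposition~\ref{prop:discrete-artinian}; your explicit check of analytic unramifiedness via Remark~\ref{rem:normal Hilbert poly}, which Lemma~\ref{lem:ordinary-normal-e1} requires, is a detail the paper leaves implicit.
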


\begin{proof}
Let $R:=\mathcal O_{W,x}$ and $\m:=\m_{W,x}$. By Theorem~\ref{thm:multiplicity-bound}, $0<b\leq d$, and hence $a\geq0$. By Lemma~\ref{lem:minimal-lc-center}, $R$ is Cohen--Macaulay, and its residue field is $\mathbb C$. Theorem~\ref{thm:local} and Lemma~\ref{lem:ordinary-normal-e1} imply
$e_1(\m)\leq\ebarone(\m)\leq a\mult_xW=ae_0(\m)$. Applying Proposition~\ref{prop:discrete-artinian} to $R$ proves the corollary.
\end{proof}


\begin{remark}\label{rem:discrete-lc-center} With the notation of Corollary~\ref{cor:discrete-lc-center},
assume that $c>0$.
An elementary calculation shows
$$ \left\lfloor
 \frac{1}{s+1-a}\binom{c+s+1}{c+1}
 \right\rfloor\le \min\{\frac{(a+c)^{a+c}}{a^ac^c},\binom{d+c-\lceil b\rceil}{c}\}.$$
The inequality can be strict. Let $X=\mathbb P^5$, let $x=[1:0:\cdots:0]$, and choose quadratic forms $q_1,q_2$ in the last five homogeneous coordinates whose associated quadrics in $\mathbb P^4$ are smooth and meet transversely. Then $S:=\{q_1=q_2=0\}\subset\mathbb P^4$ is smooth. Let $D_i\subset X$ be the quadric cone defined by $q_i=0$, let $\Delta:=D_1+D_2$, and let $W:=D_1\cap D_2$. Thus $W$ is the projective cone over $S$ and has dimension three.

Let $\pi\colon\widetilde X\to X$ be the blow-up of $x$, with exceptional divisor $E$, and let $\widetilde D_i$ be the strict transform of $D_i$. The divisors $E,\widetilde D_1,\widetilde D_2$ have simple normal crossings near $E$, and
$$
 K_{\widetilde X}+\widetilde D_1+\widetilde D_2
 =\pi^*(K_X+D_1+D_2).
$$
Hence $(X,\Delta)$ is log canonical near $x$, and $W$ is its minimal lc center through $x$. Moreover, $\m_x\mathcal O_{\widetilde X}=\mathcal O_{\widetilde X}(-E)$ and $(\widetilde X,\widetilde D_1+\widetilde D_2+E)$ is log canonical. Hence $A_{X,\Delta}(F)\geq\ord_F(\m_x)$ for every prime divisor $F$ centered at $x$, so $b_x(X,\Delta)\geq1$. On the other hand, $A_{X,\Delta}(E)=\ord_E(\m_x)=1$, and therefore $b_x(X,\Delta)\leq1$. Thus $b_x(X,\Delta)=1$.

The germ $x\in W$ has multiplicity $\deg S=4$ and embedding dimension $5$. Thus $n=5$, $d=3$, $c=2$, $b=1$, $a=1$, and $s=1$. Corollary~\ref{cor:discrete-lc-center} is sharp in this example: $\mult_xW=4$. Theorem~\ref{thm:multiplicity-bound} bounds $\mult_xW$ by $27/4$, while Helmke's multiplicity bound in \cite[Theorem~4.3]{Hel97} is $\mult_xW\leq\binom{4}{2}=6$.
\end{remark}

The example in Remark~\ref{rem:discrete-lc-center} shows that the bound in Corollary~\ref{cor:discrete-lc-center} can be attained. It is therefore natural to ask when the bound is sharp.

We call a triple $(d,c,b)$ \emph{smoothly realizable} if there exist a smooth germ $x\in X$, a pair $(X,\Delta)$ that is log canonical but not klt at $x$, and a positive-dimensional minimal lc center $W=W_x(X,\Delta)$ such that $\dim W=d$, $\edim\mathcal O_{W,x}-d=c$, and $b_x(X,\Delta)=b$. For a smoothly realizable triple, let $M_{d,c}(b)$ denote the maximal possible value of $\mult_xW$.

\begin{question}
Characterize the smoothly realizable triples $(d,c,b)$ and determine $M_{d,c}(b)$. In particular, when is the bound in Corollary~\ref{cor:discrete-lc-center} attained?
\end{question}

A sharper multiplicity bound for the minimal lc centers arising in Proposition~\ref{prop:cutting}
may improve the numerical estimate and reduce $C_0$ in Theorem~\ref{thm:main}.

Finally, we prove Corollary~\ref{cor:klt-multiplicity}.

\begin{proof}[Proof of Corollary~\ref{cor:klt-multiplicity}]
After shrinking $X$ around $x$, we may assume that
$(X,\Delta)$ is klt.
Let $p\colon Y:=X\times\mathbb A^1\to X$ be the projection,
and set $H:=X\times\{0\}$ and $y:=(x,0)$.

Take a log resolution $\mu\colon X'\to X$ of
$(X,\Delta,\m_x)$ and write
$K_{X'}+\Delta_{X'}=\mu^*(K_X+\Delta)$.
Let $p'\colon X'\times\mathbb A^1\to X'$ be the projection.
Then
$$
K_{X'\times\mathbb A^1}+p'^*\Delta_{X'}+X'\times\{0\}
=(\mu\times\operatorname{id}_{\mathbb A^1})^*
(K_Y+p^*\Delta+H).
$$
Since all coefficients of $\Delta_{X'}$ are less than $1$
and $p'^*\Delta_{X'}+X'\times\{0\}$ has simple normal
crossing support, $(Y,p^*\Delta+H)$ is log canonical
and has $H$ as its unique lc center.

Write $\m_x\mathcal O_{X'}=
\mathcal O_{X'}(-\sum_jr_jE_j)$, where $r_j>0$. On the log resolution
$\mu\times\operatorname{id}_{\mathbb A^1}$, the pullback
of $\m_x\mathcal O_Y$ has order $r_j$ along $E_j\times\mathbb A^1$ and order $0$ along $X'\times\{0\}$.
Hence
$$
\lct_y((Y,p^*\Delta+H);\m_x\mathcal O_Y)
=\min_j\frac{1-\mult_{E_j}\Delta_{X'}}{r_j}=b.
$$
Set $b':=b_y(Y,p^*\Delta+H)$. Since $\m_x\mathcal O_Y\subseteq\m_y$, we have $b'\geq b$. The local rings $\mathcal O_{H,y}$ and $\mathcal O_{X,x}$
are isomorphic and are Cohen--Macaulay by
Lemma~\ref{lem:minimal-lc-center}.
By Theorem~\ref{thm:local} and
Lemma~\ref{lem:ordinary-normal-e1}, we have
$$
e_1(\m_x)\leq\ebarone(\m_x)
\leq\frac{d-b'}{2}\mult_xX
\leq a\mult_xX.
$$
Proposition~\ref{prop:discrete-artinian} now proves
the corollary.
\end{proof}

\begin{remark}\label{rem:klt-multiplicity-without-edim}
Similarly, we may show that the inequality in
Corollary~\ref{cor:multiplicity-without-edim} also holds for klt pairs.
More precisely, let $(X,\Delta)$ be a pair that is
klt near a closed point $x$, and set $d:=\dim X$. Then
$$b_x(X,\Delta)\leq d-2+\frac{2}{\mult_xX}.$$
The following example shows that equality can hold in the above inequality. Let $d,r\geq2$ be integers, let
$S:=\mathbb A^2/\boldsymbol\mu_r$, where
$\zeta\cdot(u,v)=(\zeta u,\zeta v)$, and let
$x\in X:=S\times\mathbb A^{d-2}$ be the image of
the origin.
The Veronese grading gives $\mult_xX=r$.
The quotient map $\mathbb A^d\to X$ is \'etale in codimension one, and the pullback of $\m_x$ is
$(u,v)^r+(z_1,\ldots,z_{d-2})$.
The log canonical threshold of this monomial ideal is
$d-2+2/r$, so
$b_x(X,0)=d-2+2/r$
(cf. \cite[Proposition~5.20]{KM98}).
Thus equality holds in the above inequality. Moreover, $c=r-1$, $a=1-1/r$, and $s=1$, so the bound
in Corollary~\ref{cor:klt-multiplicity} is also attained.
\end{remark}



\end{document}